\newtheorem{thm}[equation]{Theorem}
\newtheorem{defn}[equation]{Definition}
\newtheorem{lem}[equation]{Lemma}
\def\R{{\mathbb{R}}}
\def\T{{\mathbb{T}}}
\def\C{{\mathbb{C}}}
\def\N{{\mathbb{N}}}
\def\Z{{\mathbb{Z}}}
\renewcommand{\a}{\alpha}
\renewcommand{\b}{\beta}
\newcommand{\g}{\gamma}
\renewcommand{\d}{\delta}
\newcommand{\la}{\lambda}
\newcommand{\e}{\varepsilon}
\renewcommand{\t}{\tau}
\newcommand{\te}{\theta}
\newcommand{\s}{\sigma}
\newcommand{\vp}{\varphi}
\newcommand{\8}{\infty}
\newcommand{\vt}{\vartheta}
\newcommand{\vr}{\varrho}
 \numberwithin{equation}{section}
\begin{document}

\title[] {Roth's Theorem in the Piatetski--Shapiro primes}
\author[M.Mirek]
{Mariusz Mirek}
\address{M.Mirek \\
Universit\"{a}t Bonn \\
Mathematical Institute\\
Endenicher Allee 60\\
D--53115 Bonn \\
Germany}
 \email{mirek@math.uni-bonn.de}

\thanks{
The author was partially supported by NCN grant DEC--2012/05/D/ST1/00053}

\maketitle

\begin{abstract}
Let $\mathbf{P}$ denote the set of prime numbers and, for an appropriate function $h$, define a set $\mathbf{P}_{h}=\{p\in\mathbf{P}: \exists_{n\in\mathbb{N}}\ p=\lfloor h(n)\rfloor\}$. The aim of this paper is to show that every subset of $\mathbf{P}_{h}$ having positive relative upper density contains a nontrivial three--term arithmetic progression. In particular the set of Piatetski--Shapiro primes of fixed type $71/72<\gamma<1$, i.e.
$\{p\in\mathbf{P}: \exists_{n\in\mathbb{N}}\ p=\lfloor n^{1/\gamma}\rfloor\}$ has this feature. We show this by proving the counterpart of Bourgain--Green's restriction theorem for the set $\mathbf{P}_{h}$.
\end{abstract}

\section{Introduction and statement of results}
Let $A$ be a subset of positive integers, for any $N\in\N$ we define the density $\triangle_{A}(N)$ of $A$ to be the number $\triangle_{A}(N)=\frac{1}{N}|A\cap[1, N]|$, and then we define the upper density of $A$ to be the quantity $\bar{\triangle}(A)=\limsup_{N\to\8}\triangle_{A}(N)$. We will say that $A$ contains three--term arithmetic progression if there is $a\in A$ and $d\not=0$ such that $a, a+d, a+2d\in A$. Let $N\in\N$, then $r_3(N)$ denotes the Erd\"{o}s--Tur\'{a}n constant, which is the density of the largest set $A\subseteq\{1, 2,\ldots, N\}$ containing no non--trivial three--term arithmetic progression.

Before we formulate our results we begin with a sketch of the historical background, which will justify our motivations. On the one hand, in 1953 Roth \cite{Rot}  proved that any subset of $\N$ having positive upper density contains infinitely many non--trivial three--term arithmetic progressions. In particular, thanks to this remarkable result we know much more. Namely, that $r_3(N)=O((\log \log N)^{-1})$. After that there was no development until Heath--Brown \cite{HB1} and Szemer\'{e}di  \cite{Ser}. They showed that $r_3(N)=O((\log N)^{-c})$ for some small $c>0$. The next advance was done by Bourgain, who proposed a new approach based on analysis of Bohr sets, instead of passing to short subprogressions and obtained $r_3(N)=O((\log \log N)^{1/2}(\log N)^{-1/2})$ in \cite{B1}, and almost a decade later in \cite{B2} showed that $r_3(N)=O((\log \log N)^{2}(\log N)^{-2/3})$. Not long afterwards, Sanders \cite{San1} refined Bourgain's arguments \cite{B2} and proved that $r_3(N)=O((\log N)^{-3/4+o(1)})$. The best currently known result in this field also belongs to Sanders \cite{San2} and gives $r_3(N)=O((\log \log N)^{5}(\log N)^{-1})$.
It is worth mentioning that the methods of \cite{San2} are largely unrelated to these last achievements.

On the other hand, the same kind of questions (about the existence of non--trivial three--term arithmetic progressions) may concern subsets of integers with vanishing upper density. The set of the prime numbers $\mathbf{P}$ turned out to be a natural candidate to study, especially in view of the Van der Corput theorem \cite{vCor}, where it was established that the set $\mathbf{P}$ contains infinitely many arithmetic progressions of length three. Not long ago, we waited until a common generalization of the theorem of Roth and Van der Corput to the set of primes. Namely, Green \cite{G} showed that every $A\subseteq\mathbf{P}$ with positive relative upper density, i.e. $\limsup_{N\to\8}\frac{|A\cap[1, N]|}{|\mathbf{P}\cap[1, N]|}>0$ contains a non--trivial three--term arithmetic progression. At almost the same time Green and Tao \cite{GT} proved the counterpart of Szemer\'{e}di's theorem \cite{Ser1} in the primes. More precisely, they established the existence of arbitrarily long arithmetic progressions in subsets of the primes having positive relative upper density. It is worth  pointing out that Green's theorem \cite{G} provides some quantitative result. Namely, it shows that if $|A\cap[1, N]|\ge CN(\log\log\log\log\log N)^{1/2}(\log N)^{-1}(\log\log\log\log N)^{-1/2}$ for some $N\ge N_0$, ($N_0\in\N$ and $C>0$ are absolute constants) then $A\cap[1, N]$ contains a non-trivial arithmetic progression of length three. The lower bound has been subsequently relaxed to $N\log\log\log N(\log N)^{-1}(\log\log N)^{-1/3}$ by Helfgott and De Roton \cite{HdR}, and recently to $N(\log N)^{-1}(\log\log N)^{-1+o(1)}$ by Naslund \cite{Nas}.

Finally, it should be emphasized  that there are also interesting  random constructions of sparse subsets of integers which contain  non--trivial three--term arithmetic progressions, see  \cite{KLR}, \cite{HL} and the references given there or recent paper of Conlon and Gowers \cite{CG}, which introduces new very powerful methods.

In spite of the fact that nowadays our knowledge of arithmetic structure of the set of prime numbers becomes satisfactory, not much has been developed for the set of Piatetski--Shapiro primes $\mathbf{P}_{\g}$ of fixed type $\g<1$ ($\g$ is sufficiently close to $1$), i.e.
$$\mathbf{P}_{\g}=\{p\in\mathbf{P}: \exists_{n\in\N}\ p=\lfloor n^{1/{\g}}\rfloor\}.$$
 In 1953 Piatetski--Shapiro \cite{PS} (see also \cite{GK}) established the asymptotic formula
 $$|\mathbf{P}_{\g}\cap[1, x]|\sim\frac{x^{\g}}{\log x}  \ \ \mbox{as \ $x\to\8$},$$ for every $\g\in(11/12, 1)$, which obviously implies that $\mathbf{P}_{\g}$ has a vanishing relative upper density in $\mathbf{P}$.
  It is worth emphasizing that the range $\g\in(11/12, 1)$ in the asymptotic formula of Piatetski--Shapiro \cite{PS} was improved by Kolesnik \cite{Kol}, Graham (unpublished), Leitmann (unpublished), Heath--Brown \cite{HB}, Kolesnik \cite{Kol1}, Liu--Rivat \cite{LR}, and recently  by Rivat and Sargos \cite{RS} for $\gamma\in(2426/2817, 1)$. This is the best known result to date.

  However, more to the point, it can be observed that neither Green \cite{G} nor Green and Tao \cite{GT} theorem does  settle if $\mathbf{P}_{\g}$ contains non--trivial arithmetic progressions of length at least three, since $\mathbf{P}_{\g}$ has zero density inside $\mathbf{P}$.

 Therefore, being motivated by this observation and the great recent achievements in the field of additive combinatorics, we are going to prove, in this paper, a counterpart of Roth's theorem for the Piatetski--Shapiro primes.
 \begin{thm}\label{PSthm}
 Assume that $\g\in(71/72, 1)$, then every $A\subseteq \mathbf{P}_{\g}$ with positive relative upper density, i.e. $\limsup_{N\to\8}\frac{|A\cap[1, N]|}{|\mathbf{P}_{\g}\cap[1, N]|}>0$ contains a non--trivial three--term arithmetic progression.
 \end{thm}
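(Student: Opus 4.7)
The plan is to follow the transference strategy initiated by Green \cite{G} for the ordinary primes, replacing the restriction theorem for $\mathbf{P}$ by its announced analogue for $\mathbf{P}_{\g}$. Fix $c=1/\g\in(1,72/71)$ and encode membership in $\mathbf{P}_{\g}$ through the Beatty-type identity
\[
\mathbf{1}_{\mathbf{P}_{\g}}(n)=\mathbf{1}_{\mathbf{P}}(n)\bigl(\lfloor-n^{\g}\rfloor-\lfloor-(n+1)^{\g}\rfloor\bigr),
\]
and set $\La_{\g}(n)=\Lambda(n)\bigl(\lfloor-n^{\g}\rfloor-\lfloor-(n+1)^{\g}\rfloor\bigr)$ to be the corresponding weighted von Mangoldt function, which is essentially supported on $\mathbf{P}_{\g}$. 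By the Piatetski--Shapiro asymptotic one has $\sum_{n\le N}\La_{\g}(n)\sim N^{\g}$, so a suitable renormalisation of $\La_{\g}\cdot\mathbf{1}_{A}$ produces a nonnegative function on $\{1,\dots,N\}$ whose $\ell^{1}$ mass is bounded below by a constant depending only on the assumed relative upper density of $A$ in $\mathbf{P}_{\g}$.

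The core technical step is to prove a restriction inequality of Bourgain--Green type, of the schematic form
\[
\Bigl\|\sum_{n\le N}\La_{\g}(n)\,g(n)\,e^{2\pi i n\xi}\Bigr\|_{L^{q}(\T)}\lesssim_{q}\Bigl(\sum_{n\le N}\La_{\g}(n)\,|g(n)|^{q'}\Bigr)^{1/q'},
\]
for some fixed $q>2$, uniformly in $g:\Z\to\C$. As in \cite{G}, once such an estimate is available a soft transference argument manufactures a pseudorandom majorant for $\La_{\g}$ and allows one to deduce Roth's theorem on positive-relative-density subsets of $\mathbf{P}_{\g}$ directly from Roth's theorem on the integers; the quantitative bound on $r_{3}(N)$ recalled in the introduction absorbs without difficulty the density loss $\sim N^{\g-1}$ coming from the Piatetski--Shapiro weighting.

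To establish the restriction inequality I would run the Hardy--Littlewood circle method on $\T$. The fractional-part factor $\lfloor-(n+1)^{\g}\rfloor-\lfloor-n^{\g}\rfloor$ is first expanded into a truncated Fourier series in $\{n^{\g}\}$ at a height $M\approx N^{\eta}$, with $\eta>0$ small, and the tail is controlled via Vaaler's lemma. The term of zero Fourier frequency returns the genuine von Mangoldt function, for which the restriction theorem of Bourgain--Green for $\mathbf{P}$ applies directly. Each oscillatory frequency $1\le m\le M$ leaves an exponential sum of the form $\sum_{n\le N}\Lambda(n)\,e^{2\pi i(m\,n^{\g}+n\xi)}$ which, after a Vaughan or Heath--Brown decomposition, reduces to Type~I and Type~II bilinear sums of the shape $\sum_{k}\sum_{l}a_{k}b_{l}\,e^{2\pi i(m(kl)^{\g}+(kl)\xi)}$. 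These are then bounded via van der Corput $k$-th derivative estimates together with Weyl differencing on the $k$ and $l$ variables, and a standard duality / $TT^{\ast}$ argument converts the resulting pointwise exponential sum bounds into the desired $L^{q}$ inequality.

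The principal obstacle is precisely the minor-arc phase of this analysis. The polynomial-like phase $m(kl)^{\g}$, depending on the auxiliary Fourier frequency $m\le N^{\eta}$, and the linear phase $(kl)\xi$ must be simultaneously balanced; the dyadic ranges of $k,l$, the cut-off $M$, and the denominator of a rational approximation to $\xi$ on the minor arcs have to be tuned so that both Type~I and Type~II bilinear sums produce a uniform power saving $N^{1-\d}$ with $\d>0$ over all admissible $m$. This simultaneous optimisation is precisely what forces $\g$ into $(71/72,1)$, and is the numerological reason why the range here is strictly smaller than the one $(2426/2817,1)$ available to Rivat--Sargos \cite{RS} for the plain Piatetski--Shapiro asymptotic: any improvement in the underlying van der Corput / Vinogradov technology for exponential sums with a perturbed polynomial phase would automatically extend the present theorem.
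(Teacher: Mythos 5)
Your blueprint is essentially the one the paper follows: Theorem \ref{PSthm} is just the special case $h(x)=x^{1/\g}$ of Theorem \ref{Roththm}, and the proof there proceeds as you outline --- encode membership in $\mathbf{P}_{\g}$ by the floor identity of Lemma \ref{intlem}, expand the sawtooth in a truncated Fourier series, let the zero frequency reproduce the ordinary primes so that Bourgain--Green's restriction theorem applies, control each nonzero frequency by Vaughan's identity and van der Corput bounds for the resulting Type I/II bilinear sums (Lemmas \ref{finboundlem} and \ref{billem}, leading to Lemma \ref{formlem} and Theorem \ref{bgthm1}), and then run a Green-style transference with a quantitative Roth input. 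So the strategy is the right one; but two of your claims would not survive being made precise, and they are exactly the places where the paper has to work.

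First, the assertion that the bound on $r_3(N)$ ``absorbs without difficulty the density loss $\sim N^{\g-1}$'' is wrong as stated: no known bound on $r_3(N)$ tolerates a polynomially small density, so the sparsity of $\mathbf{P}_{\g}$ inside $[1,N]$ must never reach the integer Roth step. In the paper this is arranged by weighting the measure on $A$ by $\vp'(p)^{-1}\log p$ (normalized by $mN$ after a $W$-trick passage to a progression $mn+b$), so that the transferred function on $\Z_N$ has mass bounded below by a constant $\a>0$; the quantitative input (Sanders's theorem, fed through a Varnavides averaging argument in Lemma \ref{lemroth7}) is then applied at constant density, and its only job is to beat the error terms $N^{-1/2}+\e^{2}\d^{-r}+\d^{2-r/r'}$ coming from the restriction estimate and the Bohr-set smoothing, not to compensate any $N^{\g-1}$ factor. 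The ``soft transference'' also genuinely needs the $W$-trick: without it the majorant is not pseudorandom at rationals with small denominator, which is precisely what Lemma \ref{lemroth3} provides. Second, the minor-arc tuning in $\xi$ that you single out as the principal obstacle does not occur: the second derivative of the phase $\xi k+m\vp(k)$ (and of its bilinear analogues) is independent of the linear term, so the van der Corput estimates, and hence Lemma \ref{formlem}, are uniform in $\xi\in[0,1]$; the paper deliberately avoids the circle method, and the exponent $71/72$ comes from optimizing the cutoff $M=P^{1+\chi+\e}\vp(P)^{-1}$ against the Type I/II savings and the interpolation in Theorem \ref{bgthm1}, not from balancing rational approximations to $\xi$ against the auxiliary frequency $m$.
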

 However, the proof of Theorem \ref{PSthm} will follow from much more general Theorem \ref{Roththm} where we are going to study subsets of the prime numbers of the form $$\mathbf{P}_{h}=\{p\in\mathbf{P}:
\exists_{n\in\N}\ p=\lfloor h(n)\rfloor\},$$ where $h$ is an appropriate
function. Before we formulate Theorem \ref{Roththm} we need to introduce the definition of functions $h$, which we will consider. But throughout the paper, we encourage the reader to bear in mind the set of Piatetski--Shapiro primes as a principal example which will allow us to get a better understanding of further generalizations.

Throughout the whole paper, unless otherwise stated, we will use the convention that $C > 0$ stands for a large positive constant whose value may vary from occurrence to occurrence.
For two quantities $A>0$ and $B>0$ we say that $A\lesssim B$ ($A\gtrsim B$) if there exists an absolute constant $C>0$ such that $A\le CB$ ($A\ge CB$). We will write $A\lesssim_{\d} B$ ($A\gtrsim_{\d} B$) to indicate that
the constant $C>0$ depends on some $\d>0$. If $A\lesssim B$ and $A\gtrsim B$ hold simultaneously then we will shortly write that $A\simeq B$.
\begin{defn}\label{defn}
Let $c\in[1, 2)$ and $\mathcal{F}_c$ be the family of all functions $h:[x_0, \8)\mapsto [1, \8)$ (for some $x_0\ge1$)  satisfying
\begin{enumerate}
\item[(i)] $h\in \mathcal{C}^3([x_0, \8))$ and
$$h'(x)>0,\ \ \ \ h''(x)>0, \ \ \mbox{for every \  $x\ge x_0$.}$$
\item[(ii)] There exists a real valued function $\vartheta\in\mathcal{C}^2([x_0, \8))$ and a constant $C_h>0$ such that
\begin{align}\label{eq1}
  h(x)=C_hx^c\ell_h(x), \ \ \mbox{where}\ \ \ell_h(x)=e^{\int_{x_0}^x\frac{\vt(t)}{t}dt}, \ \ \mbox{for every \  $x\ge x_0$,}
\end{align}
and if $c>1$, then
\begin{align}\label{eq2}
  \lim_{x\to\8}\vartheta(x)=0,\ \ \lim_{x\to\8}x\vartheta'(x)=0,\ \ \lim_{x\to\8}x^2\vartheta''(x)=0.
\end{align}
  \item[(iii)] If $c=1$, then $\vt(x)$ is positive, decreasing and  for every $\varepsilon>0$
  \begin{align}\label{eq3}
    \frac{1}{\vt(x)}\lesssim_{\varepsilon}x^{\varepsilon}, \ \ \mbox{and} \ \ \lim_{x\to\8}\frac{x}{h(x)}=0.
  \end{align}
  Furthermore,
  \begin{align}\label{eq4}
  \lim_{x\to\8}\vartheta(x)=0,\ \ \lim_{x\to\8}\frac{x\vartheta'(x)}{\vt(x)}=0,\ \ \lim_{x\to\8}\frac{x^2\vartheta''(x)}{\vt(x)}=0.
\end{align}
\end{enumerate}
\end{defn}

From now on, having defined the family $\mathcal{F}_c$, we will focus our attention  on  subsets of the prime numbers $\mathbf{P}$ which have the following form
$$\{p\in\mathbf{P}: \exists_{n\in\N}\ p=\lfloor h(n)\rfloor\},$$
where $h\in\mathcal{F}_c$.
Let $\vp:[h(x_0), \8)\mapsto[1, \8)$ be the inverse function to $h$ and $\pi_h(x)$ denotes the cardinality of the set $\mathbf{P}_{h, x}=\mathbf{P}_{h}\cap[1, x]$. The family $\mathcal{F}_c$ was introduced by Leitmann in \cite{Leit} where he showed
$$\pi_h(x)\sim\frac{\vp(x)}{\log x}  \ \ \mbox{as \ $x\to\8$},$$
for every  $h\in\mathcal{F}_c$ with $c\in[1, 12/11)$. However, it is worth mentioning that originally Leitmann's definition of his family was more complicated. At the expense of  additional effort we have eliminated these complications keeping the same class of functions and having more handy formulations.

Among the functions belonging to the family $\mathcal{F}_c$ are (up to multiplicative constant $C_h>0$)
\begin{align*}
  h_1(x)=x^c\log^Ax,\ \ h_2(x)=x^ce^{A\log^Bx},\ \ h_2(x)=x\log^Cx, \ \ h_4(x)=xe^{C\log^Bx}, \ \ h_5(x)=xl_m(x),
\end{align*}
where $c\in(1, 2)$, $A\in\R$, $B\in(0, 1)$, $C>0$, $l_1(x)=\log x$ and $l_{m+1}(x)=\log(l_m(x))$, for $m\in\N$.

Our main result is the following.
 \begin{thm}\label{Roththm}
 Assume that $c\in[1, 72/71)$, $h\in\mathcal{F}_c$. Then every $A\subseteq \mathbf{P}_{h}$ with positive relative upper density, i.e. $\limsup_{N\to\8}\frac{|A\cap[1, N]|}{|\mathbf{P}_{h}\cap[1, N]|}>0$ contains a non--trivial three--term arithmetic progression.
 \end{thm}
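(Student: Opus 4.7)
The strategy I would follow mirrors Green's scheme \cite{G} for Roth's theorem in the primes: reduce the combinatorial problem to a counting statement on $\mathbb{Z}/N\mathbb{Z}$ via a \emph{restriction} $L^p$-estimate for the Fourier transform of a von Mangoldt type weight on $\mathbf{P}_h$, and then invoke the transference principle. The proof will split into three logically independent blocks: (a) an arithmetic model that re-expresses the indicator of $\mathbf{P}_h$ as a Leitmann-type density term plus an oscillatory remainder; (b) a Bourgain--Green restriction theorem adapted to $\mathbf{P}_h$; (c) the transference step that imports quantitative Roth on $\mathbb{Z}/N\mathbb{Z}$.

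For block (a), let $\varphi=h^{-1}$, and for large $N$ I would work with the weight
$$
\nu_N(n)=(\log n)\bigl(\lfloor-\varphi(n)\rfloor-\lfloor-\varphi(n+1)\rfloor\bigr)\mathbf{1}_{\mathbf{P}}(n)\mathbf{1}_{[1,N]}(n),
$$
whose support is exactly $\mathbf{P}_h\cap[1,N]$. Writing $\lfloor-\varphi(n)\rfloor-\lfloor-\varphi(n+1)\rfloor=(\varphi(n+1)-\varphi(n))+(\psi(-\varphi(n))-\psi(-\varphi(n+1)))$ with $\psi(x)=\{x\}-1/2$ and smoothing $\psi$ by Vaaler's polynomial approximation, I would decompose $\nu_N$ into its expected main term (of size $\simeq 1$ on $\mathbf{P}_h\cap[1,N]$) plus a remainder driven by the exponential sums
$$
S_N(\alpha,\beta)=\sum_{n\le N}\Lambda(n)\,e^{2\pi i(\alpha n+\beta\varphi(n))},\qquad (\alpha,\beta)\in[0,1]^2.
$$
The restriction theorem I aim at in block (b) then has the shape
$$
\Bigl(\int_0^1\Bigl|\sum_{n\le N}a(n)\nu_N(n)e^{2\pi i\alpha n}\Bigr|^pd\alpha\Bigr)^{1/p}\lesssim\Bigl(\sum_{n\le N}|a(n)|^2\nu_N(n)\Bigr)^{1/2}
$$
for some $p\in(2,3)$. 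Following Bourgain, this would fall out of a dyadic level-set / $TT^{\star}$ argument combining a minor-arc $L^{\infty}$-bound for $\widehat{\nu_N}$ with a Hardy--Littlewood major-arc expansion whose main term is provided by Leitmann's asymptotic $\pi_h(x)\sim\varphi(x)/\log x$.

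The hard part, and the only place where the numerology $c<72/71$ enters, will be the minor-arc bound for $S_N(\alpha,\beta)$. I would attack it by a Vaughan decomposition of $\Lambda(n)$, reducing matters to Type I and Type II bilinear sums in the phase $\alpha n+\beta\varphi(n)$, followed by van der Corput $A$- and $B$-processes that exploit the control of $\varphi'$, $\varphi''$, $\varphi'''$ furnished by the derivative conditions on $\vartheta$ built into Definition \ref{defn}. Balancing the Vaughan cut-offs against the van der Corput gain is precisely what pins down the admissible range $1\le c<72/71$. Once the restriction estimate is in hand, block (c) is routine transference: given $A\subseteq\mathbf{P}_h$ of relative upper density $\delta>0$, I would normalise $f_A=\delta^{-1}\pi_h(N)^{-1}\mathbf{1}_A\nu_N$, split $f_A=f_1+f_2$ where $f_1$ is a Bohr-regularised $L^{\infty}$-bounded piece of density $\gtrsim\delta$ and $f_2$ is small in $L^p$ for $p$ slightly below $3$, bound the $f_2$-contribution to the trilinear progression count by $o(N^2)$ via the restriction inequality, and finish by applying Varnavides' quantitative Roth theorem on $\mathbb{Z}/N'\mathbb{Z}$ to $f_1$, which produces $\gtrsim_{\delta}N^2$ three-term progressions. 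For $N$ large this yields a non-trivial three-term progression in $A$, completing the proof.
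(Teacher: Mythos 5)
Your overall architecture coincides with the paper's: an exponential--sum comparison for the weight on $\mathbf{P}_h$, a Bourgain--Green type restriction theorem (Theorem \ref{bgthm1}), and a transference step finished by a Varnavides-style averaging over a quantitative Roth theorem (the paper uses Sanders \cite{San2}, Lemma \ref{lemroth7}). Where you genuinely diverge is in how the restriction estimate is obtained. You propose to rerun Bourgain's argument from scratch for the hybrid weight: a $TT^{\star}$/dyadic level-set scheme fed by a Hardy--Littlewood major-arc expansion of $\widehat{\nu_N}$ (with Leitmann's asymptotic supplying the main term) and a minor-arc $L^{\infty}$ bound for $S_N(\alpha,\beta)$. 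The paper deliberately avoids any circle-method dichotomy: Lemma \ref{formlem} gives a comparison, \emph{uniform in} $\xi$, between $\sum_{p\in\mathbf{P}_{h,N}}\vp'(p)^{-1}\log p\, e^{2\pi i\xi p}$ and the corresponding sum over $\mathbf{P}_N$, with a power-saving error; the restriction theorem for $\mathbf{P}_h$ then follows by writing $\la^h_{b,m,N}=\la_{b,m,N}+(\la^h_{b,m,N}-\la_{b,m,N})$, invoking Green's Theorem \ref{bgthm} as a black box for the main term, and interpolating an $\ell^{\infty}$ bound with the uniform error bound for the difference. Your route is workable in principle but costs more: you would have to develop major-arc asymptotics for the two-variable phase $\alpha n+\beta\varphi(n)$ (essentially the Balog--Friedlander/Kumchev analysis \cite{BF}, \cite{Kum}) on top of the same Vaughan plus van der Corput machinery, whereas the uniform comparison makes arcs unnecessary and localizes all the $\mathcal{F}_c$-specific work in one lemma.

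Two caveats. First, your transference sketch omits the $W$-trick. The claim that the Bohr-regularised piece is $L^{\infty}$-bounded (the analogue of Lemma \ref{lemroth4}) requires $\sup_{\xi\neq0}|\mathcal{F}_{\Z_N}[\la](\xi)|$ to be genuinely small, and for the raw weight supported on $\mathbf{P}_h\cap[1,N]$ this fails: there are Fourier coefficients of size $\asymp1/\phi(q)$ at rationals with small denominator $q$. The paper, following Green, first passes to a single residue class $b\ (\mathrm{mod}\ m)$ with $m=\prod_{p\le W}p$, $W\asymp\log\log N$ (Lemma \ref{lemroth2}), which is what makes the bound $\lesssim\log\log W/W$ of Lemma \ref{lemroth3} — and hence your $f_1$ being bounded — available; without this (or an equivalent device) the ``routine transference'' step breaks. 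Second, a small point on the numerology: the exponential-sum balancing only forces $c<16/15$; the sharper constraint $c<72/71$ comes from needing the restriction exponent interval $\big(\tfrac{26-24\gamma}{16\gamma-15},3\big)$ to be nonempty so that an $r<3$ can be used against the trilinear form, which matches your insistence on $p\in(2,3)$ but is not purely a minor-arc phenomenon.
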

Taking $h(x)=x^{1/\g}$ and $\g\in(71/72, 1)$ in the above theorem we immediately obtain Theorem \ref{PSthm}. The proof of Theorem \ref{Roththm} is based to a large extent on the ideas of Green pioneered in \cite{G}, see also \cite{GT1}. The main ingredient will be a variant so--called Hardy--Littlewood majorant property for the set $\mathbf{P}_{h}$. Namely,
 \begin{thm}\label{HLthm}
Assume that $c\in[1, 16/15)$, $\g=1/c$, $h\in\mathcal{F}_c$. Suppose that $(a_n)_{n\in\N}$ is a sequence of complex numbers such that
$|a_n|\le1$ for any $n\in\N$. Then for any $r>\frac{26-24\g}{16\g-15}$ we have
\begin{align}\label{HL}
  \bigg\|\sum_{p\in\mathbf{P}_{h, N}}a_pe^{2\pi i p\xi}\bigg\|_{L^r(\T, d\xi)}\lesssim_{r, \g} \bigg\|\sum_{p\in\mathbf{P}_{h, N}}e^{2\pi i p\xi}\bigg\|_{L^r(\T, d\xi)},
\end{align}
where the implied constant depends on $r$ and on $\g$, but does not depend on $N\in\N$.
\end{thm}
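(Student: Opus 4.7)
My plan is to adapt Green's restriction theorem for the primes \cite{G} to the sparser set $\mathbf{P}_h$. The argument splits naturally into three layers: a duality reduction to a restriction-type estimate; a Vaaler expansion that detects membership in $\mathbf{P}_h$; and the resulting minor-arc exponential sum bounds, which are the technical heart.

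First, I would use the standard Bourgain $TT^*$ duality mechanism to reduce \eqref{HL} to a restriction estimate
\[
 \bigg\|\sum_{p\in\mathbf{P}_{h, N}}a_p\ep{p\xi}\bigg\|_{L^r(\T)}\lesssim_{r,\g} |\mathbf{P}_{h,N}|^{1/2-1/r}\,\|a\|_{\ell^2}
\]
for the extension operator. Indeed, $|a_p|\le 1$ forces $\|a\|_{\ell^2}\le|\mathbf{P}_{h,N}|^{1/2}$, while Parseval combined with nesting of $L^p$-norms on the torus gives $\|\sum_{p\in\mathbf{P}_{h,N}}\ep{p\xi}\|_{L^r(\T)}\ge |\mathbf{P}_{h,N}|^{1/2}$; matching powers of $|\mathbf{P}_{h,N}|$ then yields \eqref{HL}.

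Next, to prove the restriction estimate, I would detect membership in $\mathbf{P}_h$ via the floor identity
\[
 \mathbf{1}_{\mathbf{P}_h}(p) = \lfloor -\vp(p) \rfloor - \lfloor -\vp(p+1) \rfloor, \qquad \vp = h^{-1},
\]
and split the sum into a smooth part with weight $\vp(p+1)-\vp(p)\simeq p^{\g-1}$ and a fractional part involving the sawtooth $\{-\vp(p)\}$. The smooth part can be controlled, after Abel summation, by a weighted instance of Green's restriction theorem for the ordinary primes. For the fractional part I would apply Vaaler's trigonometric approximation of the sawtooth, which reduces the remaining task to restriction-type bounds for the exponential sums $\sum_{p\le N}a_p\ep{m\vp(p)+p\xi}$, summed against Vaaler coefficients of size $|c_m|\lesssim 1/|m|$.

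The final ingredient, and the expected main obstacle, is a uniform minor-arc estimate
\[
 \sup_{\xi}\bigg|\sum_{n\le N}\Lambda(n)\ep{m\vp(n)+n\xi}\bigg|\lesssim N^{1-\d},
\]
for some $\d=\d(\g)>0$ uniform in $m$ across a sufficiently wide range; one first replaces $\Lambda$ by a Selberg/Ramar\'{e} enveloping majorant so that Fourier-analytic tools apply cleanly. I would establish this via Vaughan/Heath--Brown Type I/II decomposition combined with Van der Corput and exponent-pair bounds for the phase $\psi(n)=m\vp(n)+n\xi$, in the spirit of \cite{HB, RS}. The threshold $c\in[1,16/15)$ should be exactly the regime in which the Type II sums afford a genuine power saving, and the range $r>(26-24\g)/(16\g-15)$ should emerge by optimising the Tomas--Stein interpolation between the trivial $L^\infty$ bound and Parseval against this gain. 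Obtaining this minor-arc bound with the requisite uniformity in both $m$ and $\xi$ is where essentially all the delicate analytic number theory enters the proof.
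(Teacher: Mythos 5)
Your overall architecture is the same as the paper's, but the first layer of your plan, as written, does not close. You propose the restriction estimate $\big\|\sum_{p\in\mathbf{P}_{h,N}}a_pe^{2\pi i p\xi}\big\|_{L^r(\T)}\lesssim|\mathbf{P}_{h,N}|^{1/2-1/r}\|a\|_{\ell^2}$ and then match powers against the Parseval lower bound $\big\|\sum_{p\in\mathbf{P}_{h,N}}e^{2\pi ip\xi}\big\|_{L^r(\T)}\ge|\mathbf{P}_{h,N}|^{1/2}$. With $\|a\|_{\ell^2}\le|\mathbf{P}_{h,N}|^{1/2}$ your estimate only gives the upper bound $|\mathbf{P}_{h,N}|^{1-1/r}$, and since $r>2$ this exceeds $|\mathbf{P}_{h,N}|^{1/2}$ by the unbounded factor $|\mathbf{P}_{h,N}|^{1/2-1/r}$; the powers do not match. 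Worse, the true size of the right-hand side of \eqref{HL} is $\simeq N^{-1/r}\vp(N)/\log N$ (the arc $|\xi|\le1/(100N)$ gives the lower bound, and the restriction theorem the matching upper bound), which is smaller than $|\mathbf{P}_{h,N}|^{1-1/r}$ by roughly $(N/|\mathbf{P}_{h,N}|)^{1/r}$, so even if your set-size--normalized estimate were true it would not imply \eqref{HL}. What is needed is the estimate at spatial scale $N$ with the natural weights, i.e. $\|T_hf\|_{L^r(\T)}\le C_{r,\g}N^{-1/r}\|f\|_{L^2(\la_{b,m,N}^h)}$ as in Theorem \ref{bgthm1}; applying it with $f(p)=a_p\vp'(p)/\log p$, using the summation-by-parts evaluation $\sum_{p\in\mathbf{P}_{h,N}}\vp'(p)/\log p\lesssim\vp(N)^2/(N\log^2N)$, and comparing with the major-arc (not Parseval) lower bound $\gtrsim N^{-1/r}\vp(N)/\log N$ is exactly how the exponents balance.

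Your second and third layers do coincide with the paper's proof: membership in $\mathbf{P}_h$ is detected by $\lfloor-\vp(p)\rfloor-\lfloor-\vp(p+1)\rfloor=1$, the sawtooth is expanded (the paper uses the truncated Fourier expansion of $\{x\}-1/2$ with error \eqref{four2} rather than Vaaler, which is immaterial), the smooth part is transferred to Green's restriction theorem for $\mathbf{P}$, and the key input is $\sup_{\xi}\big|\sum_{k\le N}\Lambda_{a,q}(k)e^{2\pi i(\xi k+m\vp(k))}\big|\lesssim N^{1-\delta}$ uniformly for $0<|m|\le M\simeq N^{1+\chi}\vp(N)^{-1}$, proved by Vaughan's identity plus Van der Corput (Lemmas \ref{finboundlem} and \ref{formlem}); no Selberg/Ramar\'e envelope is needed for this error term, since Green's theorem is used as a black box only for the main term. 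One caution: once arbitrary coefficients $a_p$ are present you cannot feed a coefficient-free sup-norm bound directly into the Vaaler-expanded sum $\sum_p a_p e^{2\pi i(m\vp(p)+p\xi)}$. The comparison must be run at the level of the measures inside the $TT^*$ kernel, i.e. bound $\|f*\mathcal{F}_{\Z}[\la_{b,m,N}^h-\la_{b,m,N}]\|_{L^r(\T)}$ by interpolating the $L^1\to L^\infty$ bound (where the exponential-sum estimate enters, with no coefficients) against the $L^2\to L^2$ bound $\|\la_{b,m,N}^h-\la_{b,m,N}\|_{\ell^\infty(\Z)}\lesssim\log^2N/\vp(N)$; that interpolation is also what produces the threshold $r>\frac{26-24\g}{16\g-15}$, not a Tomas--Stein optimization against Parseval.
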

In fact, in order to get Theorem \ref{HLthm}, we prove likewise in \cite{G}, a somewhat stronger result (see Theorem \ref{bgthm1}), which we call a restriction theorem for the set $\mathbf{P}_h$. The strategy of our proof (Theorem \ref{HLthm} or Theorem \ref{bgthm1}) is extremely simple. We shall reduce the estimate over $p\in\mathbf{P}_{h, N}$ in Theorem \ref{HLthm} to the estimate over $p\in\mathbf{P}_{N}=\mathbf{P}\cap[1, N]$ and use the result of Green \cite{G}. Our task then, will be reduced to study the error term. For this purpose we have to prove the following.
\begin{lem}\label{formlem}
Assume that $c\in[1, 16/15)$, $h\in\mathcal{F}_c$, $\vp$ be its inverse and $\g=1/c$. Let $q\in\N$ and $0\le a\le q-1$ such that $(a, q)=1$. If $\chi>0$ satisfy $16(1-\g)+28\chi<1$, then there exists $\chi'>0$ such that for every $N\in\N$ and for every $\xi\in[0, 1]$
\begin{align}\label{form}
   \sum_{\genfrac{}{}{0pt}{}{p\in\mathbf{P}_{h, N}}{p\equiv a(\mathrm{mod}q)}}\vp'(p)^{-1}\log p\ e^{2\pi i \xi p}=\sum_{\genfrac{}{}{0pt}{}{p\in\mathbf{P}_{N}}{p\equiv a(\mathrm{mod}q)}} \log p\ e^{2\pi i \xi p}+O\big(N^{1-\chi-\chi'}\big).
\end{align}
The implied constant is independent of $\xi$ and $N\in\N$.
\end{lem}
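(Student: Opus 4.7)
The approach follows the classical Piatetski--Shapiro strategy, combined with Vaaler's trigonometric approximation of $\psi(x)=\{x\}-1/2$ and with exponential sum estimates over primes via Vaughan's identity. For an integer $n\ge x_0$, the condition $n\in\mathbf{P}_h$ amounts to the existence of an integer in $[\varphi(n),\varphi(n+1))$, which (for $n$ large) yields the identity
\[
\mathbf{1}_{\mathbf{P}_h}(n)=\lfloor -\varphi(n)\rfloor-\lfloor -\varphi(n+1)\rfloor.
\]
Writing $\lfloor x\rfloor=x-1/2-\psi(x)$, this decomposes into a smooth piece $\varphi(n+1)-\varphi(n)$ and a fractional--part remainder $\psi(-\varphi(n+1))-\psi(-\varphi(n))$. (For $c=1$ the interval may contain several integers, but condition (iii) in Definition~\ref{defn} keeps the count uniformly bounded and the identity still holds up to a negligible correction.)

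Plugging this decomposition into the LHS of (\ref{form}), the smooth contribution equals
\[
\sum_{\substack{p\in\mathbf{P}_{N}\\ p\equiv a(q)}}\frac{\varphi(p+1)-\varphi(p)}{\varphi'(p)}\log p\ e^{2\pi i\xi p}.
\]
Taylor's theorem gives $\varphi(p+1)-\varphi(p)=\varphi'(p)+O(|\varphi''(p)|)$, and the assumptions on $\vartheta$ in (\ref{eq2}) or (\ref{eq4}) imply $|\varphi''(p)/\varphi'(p)|\lesssim 1/p$ (with at worst a logarithmic factor in the $c=1$ case). Hence this piece reproduces $\sum_{p\in\mathbf{P}_N,\,p\equiv a(q)}\log p\, e^{2\pi i\xi p}$ up to an admissible $O(\log^2 N)$ error.

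The heart of the argument is to bound the remainder
\[
R=\sum_{\substack{p\le N\\ p\equiv a(q)}}\bigl(\psi(-\varphi(p+1))-\psi(-\varphi(p))\bigr)\varphi'(p)^{-1}\log p\, e^{2\pi i\xi p}
\]
by $O(N^{1-\chi-\chi'})$. After a dyadic decomposition of the range of $p$ (on each dyadic piece $\varphi'(p)^{-1}\asymp (N')^{1-\gamma}$ is essentially constant and can be pulled out using partial summation), I apply Vaaler's approximation $\psi(x)\approx\sum_{1\le|m|\le M}\alpha_m e^{2\pi i m x}$ with cutoff $M\simeq N^{\chi}$, together with the standard non--negative Vaaler majorant to control the approximation error. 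This reduces matters to a uniform upper bound on the exponential sums over primes
\[
T_m(\xi)=\sum_{\substack{p\sim N'\\ p\equiv a(q)}}\log p\ e^{2\pi i(\xi p-m\varphi(p))},\qquad 1\le m\le M,\ \xi\in\mathbb{T}.
\]
To estimate $T_m(\xi)$ I would invoke Vaughan's identity to split into Type~I and Type~II sums, and then apply the van der Corput $A$/$B$--processes to the phase $\xi t-m\varphi(t)$, exploiting the derivative bounds $|\varphi^{(k)}(t)|\asymp t^{\gamma-k}$ that follow from Definition~\ref{defn}. Optimising the Vaughan parameters against the exponent pair used at each stage yields a bound $|T_m(\xi)|\lesssim (N')^{1-\delta(\gamma,\chi)}$ with $\delta(\gamma,\chi)>\chi$ precisely in the regime $16(1-\gamma)+28\chi<1$, whence $\chi'=\delta-\chi>0$ as required.

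\textbf{The main obstacle.} Everything hinges on the exponential sum estimate for $T_m(\xi)$ being uniform in $m\le M=N^{\chi}$, in $\xi\in\mathbb{T}$, and in the residue class $a\pmod q$. The numerical constraint $c<16/15$ (equivalently $\gamma>15/16$) and the companion bound on $\chi$ come out exactly as the tight threshold for the Type~I/Type~II balance achievable by classical van der Corput technology applied to a phase of size $m t^{\gamma}$. Moreover, uniformity in $\xi$ requires a minor--arc/major--arc split of $\xi$: on the minor arcs van der Corput suffices, whereas when $\xi$ lies close to a rational with small denominator one must first extract the arithmetic resonance. Executing this argument simultaneously with the congruence condition $p\equiv a\pmod q$ is the subtlest technical point.
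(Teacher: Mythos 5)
Your overall route is the same as the paper's: the identity $\mathbf{1}_{\mathbf{P}_h}(n)=\lfloor-\vp(n)\rfloor-\lfloor-\vp(n+1)\rfloor$ (which in fact holds exactly for all $c\in[1,2)$ once $h'\ge 1$, since then $\vp(n+1)-\vp(n)\le 1$ and the interval $[\vp(n),\vp(n+1))$ contains at most one integer -- your worry about several integers when $c=1$ is unfounded), the split into a smooth part reproducing $\sum_{p\le N}\log p\, e^{2\pi i\xi p}$ and a sawtooth remainder, a truncated trigonometric approximation of $\{x\}-1/2$, and then Vaughan's identity plus van der Corput for the sums $\sum\Lambda(k)e^{2\pi i(\xi k+m\vp(k))}$. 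However, your choice of truncation level $M\simeq N^{\chi}$ makes the very first error term fail: the remainder carries the weight $\vp'(p)^{-1}\asymp N^{1-\g}$ (up to slowly varying factors), so even the zero--frequency part of the Vaaler/Fourier approximation error contributes about $N^{1-\g}\cdot N/M\approx N^{2-\g-\chi}$, which is far larger than the target $N^{1-\chi-\chi'}$ since $\g<1$. One must take $M\approx N^{1-\g+\chi}$ (the paper takes $M=P^{1+\chi+\e}\vp(P)^{-1}$), and it is precisely the need to control the exponential sums uniformly for $m$ up to this much larger threshold, summed against the sawtooth coefficients, that produces the constraint $16(1-\g)+28\chi<1$; with $m\le N^{\chi}$ only, your claim that the same numerology ``comes out exactly'' is not substantiated.

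Second, the step you single out as the subtlest -- a major/minor--arc decomposition in $\xi$ with some unspecified extraction of ``arithmetic resonance'' near rationals -- is not needed and is not how the paper proceeds; the paper explicitly avoids the circle method. The point is that the linear phase $\xi k$ (and likewise the shift $s/q$ coming from detecting $k\equiv a\ (\mathrm{mod}\ q)$ by additive characters) is annihilated both by the second--derivative van der Corput test in the Type I sums and by the Weyl--van der Corput differencing in the Type II sums: after differencing, the phase derivative is governed solely by $m\vp''$, so the bounds of Lemma \ref{finboundlem} are automatically uniform in $\xi\in[0,1]$ and in the residue class. As written, your plan leaves the ``major arc'' case unhandled, which is a gap in the proposal, even though the actual resolution is that no such case distinction arises.
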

Loosely speaking, the second sum in \eqref{form} represents the term which will be covered by the result of Green \cite{G}. The error term provides a decay which determines the range of  $r>\frac{26-24\g}{16\g-15}$ in Theorem \ref{HLthm}. In the proof of Lemma \ref{formlem} we will not use the circle method of Hardy and Littlewood, which was one of the main tools in Green's work. This is caused by the completely different nature of our problem. Our problem requires Van der Corput methods/inqualities to estimate trigonometric polynomials, instead of Weyl--Vinogradov's inequality. This is forced by the non--polynomial character of functions belonging to the family $\mathcal{F}_c$. A variant of formula \eqref{form}  was proved by Balog and Friedlander \cite{BF} and by Kumchev \cite{Kum} in the context of Piatetski--Shapiro primes. They used this result to show that the ternary Goldbach problem has a solution in the Piatetski--Shapiro primes (with different parameters $\g$) instead of primes. Their theorem  has been recently extended by the author \cite{M} to the functions belonging to $\mathcal{F}_c$. On the other hand using some variant of \eqref{form} we were able to establish in \cite{M} $L^r$ -- pointwise ergodic theorems along the set $\mathbf{P}_h$ for any $r>1$. The proof of Lemma \ref{formlem} will be a co--product of methods developed by Heath--Brown \cite{HB} with the techniques from the standard proof of Vinogradov's inequality from the ternary Goldbach problem, see \cite{GK} or \cite{Nat}. However, our approach differs from the one presented by Balog and Friedlander, or Kumchev due to the complexity of functions $h\in\mathcal{F}_c$. We obtain a qualitative improvement of their result at the expense of loss of quantitative nature of their lemma. We encourage the reader to compare Lemma \ref{formlem} with the results from \cite{BF} and \cite{Kum}.


The paper is organized as follows. In Section \ref{sectf} we give the necessary properties of function $h\in\mathcal{F}_c$ and its inverse $\vp$. In Section \ref{secttool} we gathered all the tools which will be used in the other sections. Assuming momentarily Lemma \ref{formlem} we give  proofs of Theorem \ref{HLthm} and Theorem \ref{Roththm} in Section \ref{sectres} and Section \ref{sectroth} respectively. In the penultimate section we estimate some exponential sums which allows us to give   proof of Lemma \ref{formlem}, which has been postponed to Section \ref{sectformlem}.

\section*{Acknowledgements}
I would like to thank Christoph Thiele for drawing to my attention the article of Ben Green \cite{G}, which turned out to be invaluable for this paper.

\section{Basic properties of functions $h$ and $\vp$}\label{sectf}
In this section we formulate all necessary properties of function $h\in\mathcal{F}_c$ and its inverse $\vp$.
We begin with the following.
\begin{lem}\label{filem}
Assume that $c\in[1, 2)$ and $h\in\mathcal{F}_c$. Then for every $i=1, 2, 3$ there exists a function $\vt_i:[x_0, \8)\mapsto\R$ such that
 \begin{align}\label{heq}
   xh^{(i)}(x)=h^{(i-1)}(x)(\a_i+\vt_i(x)),\ \ \mbox{for every \ $x\ge x_0$,}
 \end{align}
 where $\a_i=c-i+1$, $\vt_1(x)=\vt(x)$,
 \begin{align}\label{heq1}
   \vt_i(x)=\vt_{i-1}(x)+\frac{x\vt_{i-1}'(x)}{\a_{i-1}+\vt_{i-1}(x)},\ \ \mbox{for $i=2, 3$}\ \ \mbox{and}\ \ \lim_{x\to\8}\vartheta_i(x)=0,\ \ \mbox{for $i=1, 2, 3$}.
 \end{align}
 If $c=1$, then there exist constants $0<c_1\le c_2$ and a function $\vr:[x_0, \8)\mapsto[c_1, c_2]$, such that
  \begin{align}\label{vt2c1}
   \vt_2(x)=\vt(x)\vr(x),\ \ \mbox{for every \ $x\ge x_0$ \ and } \lim_{x\to\8}\frac{x\vt_2'(x)}{\vt_2(x)}=0.
 \end{align}
 In particular \eqref{heq} with $i=2$ reduces to
 \begin{align}\label{heqc1}
   xh''(x)=h'(x)\vt(x)\vr(x),\ \ \mbox{for every \ $x\ge x_0$.}
 \end{align}
 The cases for $i=1, 3$ remain unchanged.
\end{lem}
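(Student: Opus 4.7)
My plan is to prove the relation \eqref{heq} for $i=1,2,3$ by induction on $i$, starting from the explicit representation \eqref{eq1} of $h$ and then differentiating the relation for $i-1$ to obtain the one for $i$. For the base case $i=1$, I would compute $h'$ directly from $h(x)=C_h x^c \ell_h(x)$ using $\ell_h'(x)/\ell_h(x) = \vartheta(x)/x$. This yields $h'(x) = h(x)(c + \vartheta(x))/x$, so \eqref{heq} holds with $\alpha_1 = c$ and $\vartheta_1 = \vartheta$; the limit $\vartheta_1(x)\to 0$ is exactly the first condition of \eqref{eq2} (or \eqref{eq4} when $c=1$).

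For the inductive step, I differentiate the identity $xh^{(i-1)}(x) = h^{(i-2)}(x)(\alpha_{i-1} + \vartheta_{i-1}(x))$ to get
\[
xh^{(i)}(x) = h^{(i-1)}(x)(\alpha_{i-1} - 1 + \vartheta_{i-1}(x)) + h^{(i-2)}(x)\vartheta_{i-1}'(x).
\]
Using the inductive hypothesis to replace $h^{(i-2)}(x)$ by $xh^{(i-1)}(x)/(\alpha_{i-1} + \vartheta_{i-1}(x))$ and noting $\alpha_{i-1}-1 = \alpha_i$ yields precisely the recursion \eqref{heq1} for $\vartheta_i$. The decay $\vartheta_i(x)\to 0$ for $i=2,3$ reduces to showing $x\vartheta_{i-1}'(x)\to 0$; for $i=2$ this is immediate from \eqref{eq2}, and for $i=3$ one computes, directly from the formula for $\vartheta_2$,
\[
x\vartheta_2'(x) = x\vartheta'(x) + \frac{x\vartheta'(x) + x^2\vartheta''(x)}{c+\vartheta(x)} - \frac{(x\vartheta'(x))^2}{(c+\vartheta(x))^2},
\]
and each of the three terms vanishes in the limit by \eqref{eq2}.

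The main obstacle is the $c=1$ case, where $\alpha_2 = 0$ forces the relation $xh''(x) = h'(x)\vartheta_2(x)$, so $\vartheta_2$ must be controlled in a multiplicative sense rather than merely be small. Here I would factor
\[
\vartheta_2(x) = \vartheta(x)\left(1 + \frac{x\vartheta'(x)}{\vartheta(x)(1+\vartheta(x))}\right) =: \vartheta(x)\varrho(x),
\]
and invoke the second assumption of \eqref{eq4} to conclude $\varrho(x) \to 1$; hence $\varrho$ is bounded between positive constants $c_1,c_2$ on $[x_0,\infty)$ after possibly enlarging $x_0$. To verify $x\vartheta_2'(x)/\vartheta_2(x) \to 0$, I would split
\[
\frac{x\vartheta_2'(x)}{\vartheta_2(x)} = \frac{x\vartheta'(x)}{\vartheta(x)} + \frac{x\varrho'(x)}{\varrho(x)},
\]
the first term vanishing by \eqref{eq4}. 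For the second, differentiating the explicit formula for $\varrho$ and using the identity $x\vartheta'(x) = u(x)\vartheta(x)$ with $u(x) := x\vartheta'(x)/\vartheta(x)$, one reduces $x\varrho'(x)$ to combinations of $u(x)$, $x u'(x)$, and $x^2\vartheta''(x)/\vartheta(x)$, all of which tend to zero thanks to the three limits in \eqref{eq4} (the computation $xu'(x) = u(x) + x^2\vartheta''(x)/\vartheta(x) - u(x)^2$ is the key intermediate identity). This bookkeeping is elementary but tedious, and is the only genuinely nontrivial point in the proof.
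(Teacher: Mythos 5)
Your proposal is correct and follows essentially the same route as the paper's proof: the base case from the explicit form $h(x)=C_hx^c\ell_h(x)$, the inductive differentiation giving the recursion $\vt_i=\vt_{i-1}+\frac{x\vt_{i-1}'}{\a_{i-1}+\vt_{i-1}}$ with limits from \eqref{eq2}, and for $c=1$ the factorization $\vt_2=\vt\vr$ together with the verification of $\lim_{x\to\8}x\vt_2'(x)/\vt_2(x)=0$ via \eqref{eq4}. Your bookkeeping through $u(x)=x\vt'(x)/\vt(x)$ is just a mild repackaging of the direct quotient computation in the paper.
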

\begin{proof}
We may assume, without loss of generality that the constant $C_h=1$. Since $h(x)=x^c\ell_h(x)$ and $x\ell'_h(x)=\ell_h(x)\vt(x)$, then
\begin{align*}
  h'(x)=x^{c-1}\ell_h(x)(c+\vt(x)),
\end{align*}
thus taking $\vt_1(x)=\vt(x)$ we obtain \eqref{heq} for $i=1$. Generally, we see that if \eqref{heq} holds for $i-1\ge1$ instead of $i$, then this guarantees that $\frac{h^{(i-2)}(x)}{x}=\frac{h^{(i-1)}(x)}{\a_{i-1}+\vt_{i-1}(x)}$ holds for all $x\ge x_0$, and we have
\begin{align*}
 h^{(i)}(x)&=\left(\frac{h^{(i-1)}(x)}{x}-\frac{h^{(i-2)}(x)}{x^2}\right)(\a_{i-1}
 +\vt_{i-1}(x))+\frac{h^{(i-2)}(x)}{x}\vt_{i-1}'(x)\\
 &=\frac{h^{(i-1)}(x)}{x}\left(\left(1-\frac{1}{\a_{i-1}+\vt_{i-1}(x)}\right)(\a_{i-1}
 +\vt_{i-1}(x))+\frac{x\vt_{i-1}'(x)}{\a_{i-1}+\vt_{i-1}(x)}\right)\\
 &=\frac{h^{(i-1)}(x)}{x}\left(c-i+1+\vt_{i-1}(x)+\frac{x\vt_{i-1}'(x)}{\a_{i-1}+\vt_{i-1}(x)}\right).
\end{align*}
Thus we have proved that \eqref{heq} holds with $\a_i=c-i+1$ and $\vt_i(x)=\vt_{i-1}(x)+\frac{x\vt_{i-1}'(x)}{\a_{i-1}+\vt_{i-1}(x)}$. We now easily see that
\begin{align*}
 \vt_{i}'(x)=\vt'_{i-1}(x)+\frac{(\vt'_{i-1}(x)+x\vt''_{i-1}(x))(\a_{i-1}+\vt_{i-1}(x))
 -x\vt'_{i-1}(x)^2}{(\a_{i-1}+\vt_{i-1}(x))^2},
\end{align*}
and consequently $\lim_{x\to\8}\vt_i(x)=0$ for any $i=1, 2, 3$ by \eqref{eq2}.

In order to get \eqref{vt2c1} and \eqref{heqc1} we note that
\begin{align*}
\vt_2(x)=\vt(x)\left(1+\frac{x\vt'(x)}{\vt(x)(1+\vt(x))}\right).
\end{align*}
Taking $\vr(x)=1+\frac{x\vt'(x)}{\vt(x)(1+\vt(x))}$ we immediately see that there exist constants
$0<c_1\le c_2$ such that $c_1\le\vr(x)\le c_2$, by \eqref{eq4}.
The calculations stated above yield $xh'''(x)=h''(x)(-1+\vt_3(x))$ where $\vt_3(x)=\vt(x)
  +\frac{x\vt'(x)}{1+\vt(x)}+\frac{x\vt_2'(x)}{\vt_2(x)}$. The only point remaining concerns the behaviour
  of $\vt_3(x)$. We only need to prove that $\lim_{x\to\8}\frac{x\vt_2'(x)}{\vt_2(x)}=0$. Namely, by \eqref{eq4} we have
  \begin{align*}
    \lim_{x\to\8}\frac{x\vt_2'(x)}{\vt_2(x)}=\lim_{x\to\8}\frac{\frac{x\vt'(x)(1+\vt(x))}{\vt(x)}
    +\frac{(x\vt'(x)+x^2\vt''(x))(1+\vt(x))-x^2\vt'(x)^2}{\vt(x)(1+\vt(x))}}
    {1+\vt(x)+\frac{x\vt'(x)}{\vt(x)}}=0.
  \end{align*}
The  proof of the lemma is completed.
\end{proof}

\begin{lem}\label{formfunlem}
Assume that $c\in[1, 2)$, $h\in\mathcal{F}_c$, $\g=1/c$ and let $\vp:[h(x_0), \8)\mapsto[x_0, \8)$ be its inverse. Then there exists a function $\te:[h(x_0),\8)\mapsto\R$ such that $x\vp'(x)=\vp(x)(\g+\te(x))$ and
  \begin{align}\label{funfi}
  \vp(x)=x^{\g}\ell_{\vp}(x),\ \ \ \mbox{where}\ \ \ \ell_{\vp}(x)=e^{\int_{h(x_0)}^x\frac{\te(t)}{t}dt+D},
\end{align}
  for every $x\ge h(x_0)$, where $D=\log(x_0/h(x_0)^{\g})$ and $\lim_{x\to\8}\te(x)=0$. Moreover,
  \begin{align}\label{tetadef}
  \te(x)=\frac{1}{(c+\vartheta(\vp(x)))}-\g
  =-\frac{\vartheta(\vp(x))}{c(c+\vartheta(\vp(x)))}.
\end{align}
  Additionally, for every $\e>0$
  \begin{align}\label{slowhfi}
    \lim_{x\to\8}x^{-\e}L(x)=0,\ \ \ \mbox{and}\ \ \ \lim_{x\to\8}x^{\e}L(x)=\8,
  \end{align}
  where $L(x)=\ell_h(x)$ or $L(x)=\ell_{\vp}(x)$. In particular, for every $\e>0$
  \begin{align}\label{ratefi}
    x^{\g-\e}\lesssim_{\e}\vp(x),\ \ \ \mbox{and}\ \ \ \lim_{x\to\8}\frac{\vp(x)}{x}=0.
  \end{align}
  Finally, $x\mapsto x\vp(x)^{-\d}$ is increasing for every $\d< c$, (if $c=1$, even $\d\le1$ is allowed) and for every $x\ge h(x_0)$ we have
  \begin{align}\label{compfi}
    \vp(x)\simeq\vp(2x),\ \ \mbox{and}\ \ \vp'(x)\simeq\vp'(2x).
  \end{align}
\end{lem}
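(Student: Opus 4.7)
The plan is to work everywhere from the identity $h(\varphi(x))=x$ and the derivative formula $h'(x)=\frac{h(x)}{x}(c+\vartheta(x))$ coming from Lemma \ref{filem}, then extract all the stated consequences.

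First I would differentiate $h(\varphi(x))=x$ to get $\varphi'(x)=1/h'(\varphi(x))$, substitute the formula for $h'$ and obtain
$$x\varphi'(x)=\frac{\varphi(x)}{c+\vartheta(\varphi(x))}.$$
Defining $\theta(x):=\frac{1}{c+\vartheta(\varphi(x))}-\gamma=-\frac{\vartheta(\varphi(x))}{c(c+\vartheta(\varphi(x)))}$ immediately gives $x\varphi'(x)=\varphi(x)(\gamma+\theta(x))$ and \eqref{tetadef}. Since $\varphi(x)\to\infty$ as $x\to\infty$ and $\vartheta\to 0$ at infinity (both when $c>1$ by \eqref{eq2} and when $c=1$ by \eqref{eq4}), we get $\theta(x)\to0$. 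To obtain \eqref{funfi} I would integrate $\varphi'/\varphi$ from $h(x_0)$ to $x$: the identity $\log\varphi(x)-\log x_0=\gamma\log(x/h(x_0))+\int_{h(x_0)}^{x}\theta(t)t^{-1}dt$ rearranges into $\varphi(x)=x^{\gamma}\ell_\varphi(x)$ with $D=\log(x_0/h(x_0)^{\gamma})$.

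Next I would verify the slow--variation statement \eqref{slowhfi} for $L\in\{\ell_h,\ell_\varphi\}$ by writing $\log L(x)=\int_{x_*}^{x}\eta(t)t^{-1}dt+$const, where $\eta\in\{\vartheta,\theta\}$ tends to $0$. Given $\e>0$, pick $T_0$ so that $|\eta(t)|<\e/2$ for $t\ge T_0$; splitting the integral at $T_0$ yields $|\log L(x)|\le C_{\e}+(\e/2)\log x$, hence $x^{-\e}L(x)\to0$ and $x^{\e}L(x)\to\infty$. The lower bound in \eqref{ratefi} follows from $\varphi(x)=x^{\gamma}\ell_\varphi(x)\gtrsim_{\e}x^{\gamma-\e}$. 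For $\lim_{x\to\infty}\varphi(x)/x=0$, when $c>1$ choose $\e<1-\gamma$ and bound $\varphi(x)/x\le C_{\e}x^{\gamma-1+\e}\to0$; when $c=1$ I would use the hypothesis $\lim_{y\to\infty}y/h(y)=0$ from \eqref{eq3} together with the substitution $x=h(y)$, $\varphi(x)=y$, to conclude $\varphi(x)/x=y/h(y)\to 0$.

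Finally, for the monotonicity of $x\mapsto x\varphi(x)^{-\delta}$, I would differentiate to obtain $\varphi(x)^{-\delta}\bigl(1-\delta(\gamma+\theta(x))\bigr)$. When $\delta<c$ we have $\delta\gamma<1$, so $1-\delta(\gamma+\theta(x))>0$ for all sufficiently large $x$ since $\theta(x)\to 0$. When $c=1$ and $\delta=1$ the factor becomes $-\theta(x)$; by \eqref{tetadef} together with the positivity of $\vartheta$ assumed in (iii), $\theta(x)<0$, so the function is increasing. For \eqref{compfi}, from \eqref{funfi} we get
$$\frac{\varphi(2x)}{\varphi(x)}=2^{\gamma}\exp\!\left(\int_{x}^{2x}\frac{\theta(t)}{t}\,dt\right),$$
and the integral is bounded by $\sup_{t\ge x}|\theta(t)|\cdot\log 2$, which is uniformly bounded. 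The second comparison in \eqref{compfi} follows from $\varphi'(x)=\varphi(x)(\gamma+\theta(x))/x$ together with the boundedness of $\gamma+\theta$ above and below away from $0$ (since $\theta\to0$) and the comparison already established for $\varphi$.

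I expect no single step to be the main obstacle; the entire proof is a bookkeeping exercise built from $h'(x)=\frac{h(x)}{x}(c+\vartheta(x))$. The mildly delicate point is treating the $c=1$ case uniformly, where one has to lean on the specific structural assumptions \eqref{eq3}--\eqref{eq4} (positivity of $\vartheta$ and $x/h(x)\to 0$) rather than \eqref{eq2}; I would therefore separate the $c=1$ subcase each time it genuinely requires different input (limit of $\varphi/x$ and the $\delta=1$ monotonicity).
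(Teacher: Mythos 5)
Your proposal is correct and follows essentially the same route as the paper: differentiate $h(\vp(x))=x$, define $\te(x)=\frac{x\vp'(x)}{\vp(x)}-\g$ with the explicit formula \eqref{tetadef}, integrate $\vp'/\vp$ to get \eqref{funfi}, prove \eqref{slowhfi} by the $|\eta|\le\e/2$ splitting, and handle the $c=1$ peculiarities (sign of $\te$ via positivity of $\vt$, and $x/h(x)\to0$ from \eqref{eq3}) exactly where the paper does. The only cosmetic deviation is \eqref{compfi}, which you obtain from the integral representation $\vp(2x)/\vp(x)=2^{\g}\exp\bigl(\int_x^{2x}\te(t)t^{-1}dt\bigr)$ instead of the paper's comparison using monotonicity of $\vp'$ and the bound $|\te|\le\g/4$; both are equally valid.
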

\begin{proof}
Lemma \ref{filem} yields that $\lim_{x\to\8}\frac{xh'(x)}{h(x)}=c$, thus taking $\te(x)=\frac{x\vp'(x)}{\vp(x)}-\g$ we see that $\lim_{x\to\8}\te(x)=0$ and $x\vp'(x)=\vp(x)(\g+\te(x))$.
Now observe that
\begin{align*}
  \frac{\vp'(x)}{\vp(x)}=\frac{\g}{x}+\frac{\te(x)}{x}.
\end{align*}
Thus \eqref{funfi} with $D=\log(x_0/h(x_0)^{\g})$ follows from
\begin{align*}
  \log \vp(x)=\int_{h(x_0)}^x\frac{\vp'(t)}{\vp(t)}dt+\log x_0=\log x^{\g}+\int_{h(x_0)}^x\frac{\te(t)}{t}dt
  +\log x_0-\log h(x_0)^{\g}.
\end{align*}
In view of $\vp(x)h'(\vp(x))=h(\vp(x))(c+\vartheta(\vp(x)))=x(c+\vartheta(\vp(x)))$ we easily get \eqref{tetadef} since
\begin{align*}
  \te(x)=\frac{x\vp'(x)}{\vp(x)}-\g=\frac{x}{\vp(x)h'(\vp(x))}-\g=\frac{1}{(c+\vartheta(\vp(x)))}-\g
  =-\frac{\vartheta(\vp(x))}{c(c+\vartheta(\vp(x)))}.
\end{align*}

To prove \eqref{slowhfi} we may assume, without loss of generality, that $|\vartheta(x)|\le\e/2$ for every $x\ge x_0$, and observe
\begin{align*}
  x^{-\e}e^{\int_{x_0}^x\frac{\vartheta(t)}{t}dt+C}\le x^{-\e}e^{\frac{\e}{2}\int_{x_0}^x\frac{dt}{t} +C}=x^{-\e}x^{\e/2}e^C\ _{\overrightarrow{x\to\8}}\ 0.
\end{align*}
On the other hand
\begin{align*}
  x^{\e}e^{\int_{x_0}^x\frac{\vartheta(t)}{t}dt+C}\ge x^{\e}e^{-\frac{\e}{2}\int_{x_0}^x\frac{dt}{t} +C}=x^{\e}x^{-\e/2}e^C\ _{\overrightarrow{x\to\8}}\ \8.
\end{align*}
The rest of the proof (the case of $\ell_{\vp}$) runs as before. The first inequality in \eqref{ratefi} can be drawn from \eqref{slowhfi}, whereas the limit in \eqref{ratefi} is equal to $0$ by \eqref{eq3}, since
$\lim_{x\to\8}\frac{\vp(x)}{x}=\lim_{x\to\8}\frac{\vp(x)}{h(\vp(x))}=0$. Now we show that $x\mapsto x\vp(x)^{-\d}$ is increasing for every $\d< c$. Indeed,
$$\left(\frac{x}{\vp(x)^{\d}}\right)'=\frac{\vp(x)^{\d}-\d x\vp(x)^{\d-1}\vp'(x)}{\vp(x)^{2\d}}=
\frac{1-\d\g-\d\te(x)}{\vp(x)^{\d}}>0\ \Longleftrightarrow\ \d< c.$$
If $c=1$ then $\d\le1$ is allowed, since  $\te(x)<0$ by \ref{tetadef}.
The proof will be finished if we show \eqref{compfi}. It suffices to show \eqref{compfi} only for large $x\ge h(x_0)$, therefore we may assume that $|\te(x)|\le\g/4$ and $|\te(2x)|\le\g/4$ and observe
\begin{align*}
  \vp(x)\le\vp(2x)=\frac{2x\vp'(2x)}{\g+\te(2x)-\te(x)/2+\te(x)/2}\le\frac{2x\vp'(x)}{\g/2+\te(x)/2}\lesssim
  \vp(x).
\end{align*}
The proof of Lemma \ref{formfunlem} is completed.
\end{proof}
The next lemma provides a very useful formula expressing the characteristic function of the set
$\mathbf{P}_{h}$ in a more handy form.
\begin{lem}\label{intlem}
Assume that $h\in\mathcal{F}_c$ and let $\vp:[h(x_0), \8)\mapsto[x_0, \8)$ be its inverse. Then
\begin{align}\label{intlemform}
  p\in\mathbf{P}_{h} \Longleftrightarrow\ \lfloor-\vp(p)\rfloor-\lfloor-\vp(p+1)\rfloor=1,
\end{align}
for all sufficiently large $p\in\mathbf{P}_{h}$.
\end{lem}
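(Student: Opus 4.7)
The plan is to translate both the membership condition $p\in\mathbf{P}_h$ and the arithmetic condition $\lfloor-\vp(p)\rfloor-\lfloor-\vp(p+1)\rfloor=1$ into statements about the integer points of the half-open interval $[\vp(p),\vp(p+1))$, and then observe that for large $p$ this interval is too short to contain more than one integer.

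First I would note that $h\in\mathcal{F}_c$ is strictly increasing (Definition \ref{defn}(i)), so its inverse $\vp$ is well-defined and increasing on $[h(x_0),\8)$. Hence the identity $p=\lfloor h(n)\rfloor$ is equivalent to $p\le h(n)<p+1$, and applying $\vp$ this becomes $\vp(p)\le n<\vp(p+1)$. Consequently, $p\in\mathbf{P}_h$ holds if and only if the interval $[\vp(p),\vp(p+1))$ contains at least one positive integer.

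Next, using the elementary identity $\lfloor-x\rfloor=-\lceil x\rceil$, I would rewrite the right-hand side of the claim as
\[
\lfloor-\vp(p)\rfloor-\lfloor-\vp(p+1)\rfloor=\lceil\vp(p+1)\rceil-\lceil\vp(p)\rceil,
\]
which is precisely the number of integers lying in the half-open interval $[\vp(p),\vp(p+1))$. So the two conditions in the lemma differ only in whether this count is at least one versus exactly one.

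The quantitative key is therefore to show that $\vp(p+1)-\vp(p)<1$ for all sufficiently large $p$. By the mean value theorem this difference equals $\vp'(\xi)$ for some $\xi\in(p,p+1)$, and Lemma \ref{formfunlem} supplies the identity $x\vp'(x)=\vp(x)(\g+\te(x))$ with $\te(x)\to 0$, together with $\vp(x)/x\to 0$ as $x\to\8$. Hence $\vp'(x)=(\g+\te(x))\vp(x)/x\to 0$, so the interval $[\vp(p),\vp(p+1))$ has length strictly less than $1$ for all large $p$ and therefore contains at most one integer. Combining the three observations yields the asserted equivalence. I do not expect any real obstacle here beyond bookkeeping; everything reduces to Lemma \ref{formfunlem} plus the floor/ceiling identity, and the only thing worth verifying carefully is that the threshold beyond which $\vp'(p)<1$ is indeed finite, which is automatic from $\vp(x)/x\to 0$.
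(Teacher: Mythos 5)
Your proof is correct and follows essentially the same route as the paper: both reduce $p=\lfloor h(n)\rfloor$ to the condition that $[\vp(p),\vp(p+1))$ contains an integer and then use that this interval has length less than $1$ for large $p$ (the paper gets this from $\vp'(x)=1/h'(\vp(x))$ with an explicit two-sided floor manipulation, you from $\vp'(x)=(\g+\te(x))\vp(x)/x\to0$ in Lemma \ref{formfunlem} together with the counting identity $\lceil\vp(p+1)\rceil-\lceil\vp(p)\rceil=\#\{n\in\Z:\vp(p)\le n<\vp(p+1)\}$). Your bookkeeping via ceilings is a tidy repackaging of the paper's argument, not a different method.
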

\begin{proof}
First of all notice that  $h'(x)\ge1$ for every large enough $x\ge x_0$, thus
$h(x+1)-h(x)\ge 1.$
It suffices to show that
$$\exists_{n\in\N}\ p=\lfloor h(n)\rfloor \Longleftrightarrow\ \lfloor-\vp(p)\rfloor-\lfloor-\vp(p+1)\rfloor=1.$$
Assume that $p=\lfloor h(n)\rfloor$, this is equivalent to
$p\le h(n)<p+1\Longleftrightarrow\vp(p)\le n<\vp(p+1)$,
and implies that $\vp(p+1)\le \vp(h(n)+1)\le \vp(h(n+1))=n+1$, hence
$-n-1\le-\vp(p+1)<-n\le-\vp(p),$
 and we get $\lfloor-\vp(p+1)\rfloor=-n-1$ and $-n\le\lfloor-\vp(p)\rfloor $. Thus we see
\begin{align*}
  1=n+1-n&\le\lfloor-\vp(p)\rfloor-\lfloor-\vp(p+1)\rfloor
  <\vp(p+1)-\vp(p)+1=\int_p^{p+1}\vp'(x)dx+1<2,
\end{align*}
for all sufficiently large $p\in\mathbf{P}_{h}$, since $\vp'(x)=\frac{1}{h'(\vp(x))}$ and $\lfloor-\vp(p+1)\rfloor>-\vp(p+1)-1$.

Now assume that $\lfloor-\vp(p)\rfloor-\lfloor-\vp(p+1)\rfloor=1,$ hence
$\lfloor-\vp(p)\rfloor=1+\lfloor-\vp(p+1)\rfloor\le -\vp(p),$ thus
$$\vp(p)\le -\lfloor-\vp(p+1)\rfloor-1< \vp(p+1)+1-1=\vp(p+1).$$
Therefore, taking $n=-\lfloor-\vp(p+1)\rfloor-1$ we obtain
$$\vp(p)\le n<\vp(p+1)\Longleftrightarrow p\le h(n)< p+1,$$
as desired. The proof of Lemma \ref{intlem} is completed.
\end{proof}

We will look more closely at the function $\vp$ being the inverse function to the function $h\in\mathcal{F}_c$ and we collect all required properties its derivatives in the following.
\begin{lem}\label{funlemfi}
Assume that $c\in[1, 2)$, $h\in\mathcal{F}_c$, $\g=1/c$ and let $\vp:[h(x_0), \8)\mapsto[x_0, \8)$ be its inverse. Then
 for every $i=1, 2, 3,$ there exists  a function $\theta_i:[h(x_0), \8)\mapsto\R$ such that
 \begin{align}\label{fiequat}
   x\vp^{(i)}(x)=\vp^{(i-1)}(x)(\b_i+\theta_i(x)), \ \ \mbox{for every \  $x\ge h(x_0)$,}
 \end{align}
  where $\b_i=\g-i+1$ and $\lim_{x\to\8}\te_i(x)=0.$
If $c=1$, then there exists a positive function $\s:[h(x_0), \8)\mapsto(0, \8)$ and a function $\t:[h(x_0), \8)\mapsto \R$ such that \eqref{fiequat} with $i=2$ reduces to
\begin{align}\label{fiequat1}
  x\vp''(x)=\vp'(x)\s(x)\t(x),\ \ \mbox{for every \  $x\ge h(x_0)$ \ and } \lim_{x\to\8}\frac{x\te_2'(x)}{\te_2(x)}=0.
\end{align}
The cases for $i=1, 3$ remain unchanged. Moreover, $\s(x)$  is decreasing, $\lim_{x\to\8}\s(x)=0,$ $\s(2x)\simeq\s(x),$ and  $\s(x)^{-1}\lesssim_{\varepsilon}x^{\varepsilon},$
for every $\varepsilon>0$. Finally, there are constants $0<c_3\le c_4$ such that  $c_3\le-\t(x)\le c_4$ for every $x\ge h(x_0)$.
\end{lem}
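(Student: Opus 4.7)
The plan is to mimic the structure of Lemma \ref{filem}, proceeding inductively on $i$ by differentiating the $(i-1)$-th relation and using Lemma \ref{formfunlem} as input. For $i=1$ the statement is exactly \eqref{funfi}, so set $\theta_1(x)=\te(x)$. Suppose inductively that $x\vp^{(i-1)}(x)=\vp^{(i-2)}(x)(\b_{i-1}+\theta_{i-1}(x))$ holds with $\b_{i-1}=\g-i+2$ and $\lim\theta_{i-1}=0$. Differentiating this identity and solving for $x\vp^{(i)}(x)$ gives
\begin{align*}
x\vp^{(i)}(x)=\vp^{(i-1)}(x)\left(\b_{i-1}-1+\theta_{i-1}(x)+\frac{x\theta_{i-1}'(x)}{\b_{i-1}+\theta_{i-1}(x)}\right),
\end{align*}
where I used the inductive identity to substitute $\vp^{(i-2)}(x)=x\vp^{(i-1)}(x)/(\b_{i-1}+\theta_{i-1}(x))$. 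Since $\b_{i-1}-1=\b_i$, this yields the desired recursion
\begin{align*}
\theta_i(x)=\theta_{i-1}(x)+\frac{x\theta_{i-1}'(x)}{\b_{i-1}+\theta_{i-1}(x)},
\end{align*}
exactly parallel to \eqref{heq1}.

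For the case $c\in(1,2)$, one shows $\theta_i\to 0$ at each step by differentiating the explicit formula \eqref{tetadef} and using the chain rule: $\te'(x)=-\vp'(x)\vt'(\vp(x))/(c+\vt(\vp(x)))^2$, and hence $x\te'(x)$, $x^2\te''(x)$ can be expressed in terms of $\vp(x)\vt'(\vp(x))$, $\vp(x)^2\vt''(\vp(x))$, together with the already-known $\te, x\te', x^2\te''$. Since $\vp(x)\to\infty$, the conditions \eqref{eq2} force each of these to vanish at infinity, and so by induction every $\theta_i$ tends to $0$. The denominators $\b_{i-1}+\theta_{i-1}$ are bounded away from $0$ because $\b_{i-1}\ne 0$ for $i=2,3$ when $\g<1$.

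The delicate part is $c=1$ (so $\g=1$, $\b_2=0$), where the recursion for $\theta_2$ would have a zero denominator in the naive form. I plan to factor out the natural small quantity $\vt(\vp(x))$: set $\s(x):=\vt(\vp(x))$ and $\t(x):=\theta_2(x)/\s(x)$. A direct computation from \eqref{tetadef} gives $\te(x)/\s(x)\to -1$, while the correction term $\frac{x\te'(x)}{(1+\te(x))\s(x)}$ equals (up to bounded factors) $\frac{\vp(x)\vt'(\vp(x))}{\vt(\vp(x))}$, which tends to $0$ by \eqref{eq4}. Hence $\t(x)\to-1$, yielding $c_3\le-\t(x)\le c_4$ for large $x$. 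The properties of $\s$ follow from its definition: monotonicity of $\vt$ (given in (iii)) and of $\vp$ make $\s$ decreasing; $\s(x)\to 0$ since $\vt\to 0$; the bound $\s(x)^{-1}\lesssim_\e x^\e$ follows from \eqref{eq3} combined with $\vp(x)\le x$; finally $\s(2x)\simeq\s(x)$ follows by integrating $\vt'/\vt=o(1/y)$ (a consequence of \eqref{eq4}) between $\vp(x)$ and $\vp(2x)$ together with \eqref{compfi}.

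Finally, for $i=3$ when $c=1$ the recursion gives $\theta_3(x)=\theta_2(x)+\frac{x\theta_2'(x)}{\theta_2(x)}\cdot\frac{\theta_2(x)}{\b_2+\theta_2(x)}=\theta_2(x)+\frac{x\theta_2'(x)}{\theta_2(x)}$, so we need the auxiliary limit $\lim_{x\to\infty}\frac{x\theta_2'(x)}{\theta_2(x)}=0$ asserted in \eqref{fiequat1}. Since $\theta_2(x)\simeq-\s(x)$, it suffices to bound $x\theta_2'(x)/\s(x)$; expanding $\theta_2'$ by the Leibniz/quotient rule and using $\theta_2=\te+\frac{x\te'}{1+\te}$ reduces this to controlling $\frac{x\te'(x)}{\s(x)}$, $\frac{x^2\te''(x)}{\s(x)}$ and lower-order products, each of which again translates via the chain rule to the quantities $\frac{\vp\vt'(\vp)}{\vt(\vp)}$ and $\frac{\vp^2\vt''(\vp)}{\vt(\vp)}$ that vanish by \eqref{eq4}. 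I expect the main obstacle to be the bookkeeping in this last step, where one must be careful that cancellations in the $c=1$ regime do not produce hidden divisions by $\vt(\vp(x))^2$; the chain-rule identity $x\vp'(x)=\vp(x)(1+\te(x))$ from Lemma \ref{formfunlem} will be applied repeatedly to convert factors of $x$ into factors of $\vp(x)$ so that the hypotheses \eqref{eq4} can be invoked directly.
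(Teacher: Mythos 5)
Your proposal is correct and follows essentially the same route as the paper: the recursion $\theta_i=\theta_{i-1}+\frac{x\theta_{i-1}'}{\b_{i-1}+\theta_{i-1}}$ as in Lemma \ref{filem}, explicit formulas via \eqref{tetadef} and $x\vp'(x)=\vp(x)(\g+\te(x))$, the factorization $\s(x)=\vt(\vp(x))$, $\t=\te_2/\s$ in the $c=1$ case, and the verification of $\lim_{x\to\infty}x\te_2'(x)/\te_2(x)=0$ from \eqref{eq4}. The only cosmetic difference is your derivation of $\s(2x)\simeq\s(x)$ by integrating $\vt'/\vt$ instead of the paper's mean value theorem argument, which is equivalent.
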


\begin{proof}
The proof is based on simple computations. However, for the convenience of the reader we have decided to give the details. In fact, \eqref{fiequat} for $i=1$ with $\te_1(x)=\te(x)$, has been shown in  Lemma \ref{formfunlem}. Arguing likewise in the proof of Lemma \ref{filem} we obtain \eqref{fiequat} for $i=2, 3$. More precisely,
  \begin{align}\label{te1}
    \te_1(x)=\te(x)=-\frac{\vartheta(\vp(x))}{c(c+\vartheta(\vp(x)))}=\frac{1}{c+\vartheta(\vp(x))}-\g,
  \end{align}
  \begin{align}\label{te2}
  \te_2(x)&=\te(x)+\frac{x\te'(x)}{\g+\te(x)}=\frac{1}{c+\vartheta(\vp(x))}-\g-
  \frac{\vartheta'(\vp(x))\vp(x)}{(c+\vartheta(\vp(x)))^2},
\end{align}
since $\te'(x)=\left(\frac{1}{c+\vartheta(\vp(x))}-\g\right)'
  =-\frac{\vartheta'(\vp(x))\vp'(x)}{(c+\vartheta(\vp(x)))^2},$ and
\begin{align}\label{te3}
  \te_3(x)=\te(x)+\frac{x\te'(x)}{\g+\te(x)}+\frac{x\te_2'(x)}{\g-1+\te_2(x)},
\end{align}
where
\begin{align}\label{te2p}
  \te_2'(x)=-\frac{(\vt''(\vp(x))\vp(x)+2\vt'(\vp(x)))(c+\vartheta(\vp(x)))
  -2\vt'(\vp(x))^2\vp(x)}{(c+\vartheta(\vp(x)))^3}\vp'(x),
\end{align}
since
\begin{align*}
  \te_2'(x)
  &=\left(\frac{1}{c+\vartheta(\vp(x))}-\g-
  \frac{\vartheta'(\vp(x))\vp(x)}{(c+\vartheta(\vp(x)))^2}\right)'
  =-\frac{\vt'(\vp(x))\vp'(x)}{(c+\vartheta(\vp(x)))^2}\\
  &-\frac{(\vt''(\vp(x))\vp'(x)\vp(x)+\vt'(\vp(x))\vp'(x))(c+\vt(\vp(x)))
  -2\vt'(\vp(x))^2\vp(x)\vp'(x)}{(c+\vt(\vp(x)))^3}.
\end{align*}
The proof will be completed, if we elaborate the case $c=1$. We know that $x\vp''(x)=\vp'(x)\te_2(x)$,
with
\begin{align*}
  \te_2(x)&=-\frac{\vartheta(\vp(x))}{1+\vartheta(\vp(x))}-
  \frac{\vartheta'(\vp(x))\vp(x)}{(1+\vartheta(\vp(x)))^2}
  =\vartheta(\vp(x))\left(-\frac{1}{1+\vartheta(\vp(x))}-
  \frac{\vartheta'(\vp(x))\vp(x)}{\vartheta(\vp(x))(1+\vartheta(\vp(x)))^2}\right).
\end{align*}
Therefore \eqref{fiequat1} is proved with $\s(x)=\vt(\vp(x))$ and
$\t(x)=-\left(\frac{1}{1+\vartheta(\vp(x))}+
\frac{\vartheta'(\vp(x))\vp(x)}{\vartheta(\vp(x))(1+\vartheta(\vp(x)))^2}\right).$
In order to show that $\s(2x)\simeq\s(x)$ it is enough to prove that $\vt(2x)\simeq\vt(x)$.
Notice that for some $\xi_x\in(0, 1)$ we have
\begin{align*}
  \left|\frac{\vt(2x)}{\vt(x)}-1\right|=\left|\frac{(x+\xi_xx)\vt'(x+\xi_xx)}{\vt(x+\xi_xx)}\right|
  \frac{x}{x+\xi_xx}\frac{\vt(x+\xi_xx)}{\vt(x)}\le\left|\frac{(x+\xi_xx)\vt'(x+\xi_xx)}{\vt(x+\xi_xx)}\right|
  \ _{\overrightarrow{x\to\8}}\ 0,
\end{align*}
since $\vt(x)$ is decreasing.
It is easy to see that $$\s(x)^{-1}\lesssim x^{\e}, \ \ \mbox{for every $\e>0$,}$$
since $\vt(x)^{-1}\lesssim_{\varepsilon}x^{\varepsilon}$ for every $\varepsilon>0$ and by \eqref{ratefi}. Furthermore, there exist $0<c_3\le c_4$ such that $c_3\le -\t(x)\le c_4$ for every $x\ge h(x_0)$, by \eqref{eq4}. The only what is left is to verify that $\lim_{x\to\8}\frac{x\te_2'(x)}{\te_2(x)}=0$. Indeed, by \eqref{eq4} we have
\begin{align*}
  \lim_{x\to\8}\frac{x\te_2'(x)}{\te_2(x)}=\lim_{x\to\8}\frac{\frac{(\vt''(\vp(x))\vp(x)^2+2\vt'(\vp(x))\vp(x))
  (1+\vartheta(\vp(x)))
  -2\vt'(\vp(x))^2\vp(x)^2}{\vartheta(\vp(x))(1+\vartheta(\vp(x)))^4}}{\frac{1}{1+\vartheta(\vp(x))}+
  \frac{\vartheta'(\vp(x))\vp(x)}{\vartheta(\vp(x))(1+\vartheta(\vp(x)))^2}}=0.
\end{align*}
This completes the proof.
\end{proof}

\section{Necessary tools}\label{secttool}

Here we state all lemmas and fact from analytic number theory which will be used in the sequel.
All of these results can be found in \cite{GK}, \cite{IK} and \cite{Nat}.
\subsection{Van der Corput's results}
\begin{lem}[Van der Corput]\label{vdc}
Assume that $a, b\in\R$ and $a<b$. Let $F\in\mathcal{C}^2([a, b])$ be a real valued function and let $I$ be a subinterval of $[a, b]$. If there exists $\eta>0$ and $r\ge 1$  such that
\begin{align*}
  \eta\lesssim |F''(x)|\lesssim r\eta,\ \ \mbox{for every \ $x\in I$,}
\end{align*}
then
$$\bigg|\sum_{k\in I}e^{2\pi i F(k)}\bigg|\lesssim r|I|\eta^{1/2}+\eta^{-1/2}.$$
\end{lem}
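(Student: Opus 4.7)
The plan is to derive the bound from the classical Kusmin--Landau inequality for trigonometric sums: if $F'$ is monotone on $[a,b]$ and $\|F'(x)\|\ge \delta$ throughout (where $\|\cdot\|$ denotes distance to the nearest integer), then $\bigl|\sum_{a<k\le b}e^{2\pi i F(k)}\bigr|\lesssim \delta^{-1}$. I would quote this input from \cite{GK} or \cite{IK}. Since the hypothesis $\eta\lesssim |F''(x)|$ forces $F''$ (which is continuous, as $F\in\mathcal{C}^2$) to keep a fixed sign on $I$, the derivative $F'$ is strictly monotone; consequently the preimages $I_m=\{x\in I:F'(x)-m\in[-1/2,1/2]\}$, $m\in\Z$, partition $I$ into consecutive subintervals, and the number of nonempty pieces is at most $|F'(b_I)-F'(a_I)|+2\lesssim r\eta|I|+1$, by the mean value theorem combined with the upper bound $|F''|\lesssim r\eta$.

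Within each nonempty $I_m$, I would work with the translated phase $F(k)-mk$, whose derivative $F'(x)-m$ lies in $[-1/2,1/2]$, so that the distance from $F'(x)-m$ to $\Z$ equals the absolute value $|F'(x)-m|$. Split $I_m=J_m^{\mathrm{near}}\cup J_m^{\mathrm{far}}$ according to whether $|F'(x)-m|\le \eta^{1/2}$ or $|F'(x)-m|>\eta^{1/2}$. On $J_m^{\mathrm{near}}$, the lower bound $|F''|\gtrsim \eta$ and the mean value theorem force $|J_m^{\mathrm{near}}|\lesssim \eta^{-1/2}$, so the trivial estimate contributes $O(\eta^{-1/2})$. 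On $J_m^{\mathrm{far}}$, which by monotonicity of $F'$ consists of at most two subintervals, Kusmin--Landau applied with $\delta=\eta^{1/2}$ to the phase $F(k)-mk$ yields
\begin{equation*}
\Bigl|\sum_{k\in J_m^{\mathrm{far}}}e^{2\pi i F(k)}\Bigr|=\Bigl|\sum_{k\in J_m^{\mathrm{far}}}e^{2\pi i(F(k)-mk)}\Bigr|\lesssim \eta^{-1/2}.
\end{equation*}

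Summing the resulting $O(\eta^{-1/2})$ contribution over the $O(r\eta|I|+1)$ nonempty indices $m$ produces the advertised bound $r|I|\eta^{1/2}+\eta^{-1/2}$. The only genuine obstacle is Kusmin--Landau itself, a classical consequence of Abel summation whose proof I would not reproduce but simply cite; the remaining work is the bookkeeping above, namely verifying that $J_m^{\mathrm{far}}$ is a union of at most two intervals on which the hypothesis of Kusmin--Landau holds with $\delta=\eta^{1/2}$, and counting the pieces $I_m$ via the amplitude of variation of $F'$.
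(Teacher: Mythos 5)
The paper offers no proof of this lemma at all---it cites Iwaniec--Kowalski, Corollary 8.13---and your Kusmin--Landau argument is exactly the standard proof of that cited result: partition $I$ according to the nearest integer $m$ to $F'$, bound trivially the short range where $|F'-m|\le\eta^{1/2}$ (length $\lesssim\eta^{-1/2}$ by the lower bound on $|F''|$), apply Kusmin--Landau with $\delta=\eta^{1/2}$ on the at most two complementary pieces, and count the $O(r\eta|I|+1)$ values of $m$ via the variation of $F'$. The bookkeeping is sound; the only cosmetic point is that the trivial bound on the near range is really $|J_m^{\mathrm{near}}|+1\lesssim\eta^{-1/2}+1$, so, just as in the clean form of the stated lemma itself (and of the citation, which keeps factors like $(|F'(b)-F'(a)|+2)(\eta^{-1/2}+1)$), one tacitly uses $\eta\lesssim1$, or carries a harmless extra $O(r\eta|I|+1)$, to absorb the lattice-point $+1$'s.
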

Proof of Lemma \ref{vdc} can be found in \cite{IK}, see Corollary 8.13, page 208.
\begin{lem}[Weyl \& Van der Corput inequality]\label{vdc1}
Let $H\ge1$ be fixed and $z_h\in\C$ be any complex number with $H<h\le 2H$ and $I\subseteq(H, 2H]$ be an interval. Then for every $R\in\N$ we have
$$\bigg|\sum_{h\in I}z_h\bigg|^2\le \frac{H+R}{R}\sum_{|r|\le R}\left(1-\frac{|r|}{R}\right)\sum_{h, h+r\in I}z_h\overline{z}_{h+r}.$$
\end{lem}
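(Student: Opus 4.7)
The plan is to run the classical shift-and-square device (Weyl differencing): multiply the sum by the trivial factor $R$, rewrite it as a sum of $R$ shifted copies of itself supported on an interval whose length has grown only by $R$, and then apply Cauchy--Schwarz in the outer variable to trade a linear sum for a quadratic average over short differences.

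Concretely, I extend $z_h$ by zero for $h \notin I$ and record the elementary identity
$$R \sum_{h \in I} z_h = \sum_{s=1}^{R} \sum_{n \in \mathbb{Z}} z_{n+s},$$
obtained by the change of variable $h = n+s$ in each of the $R$ inner sums. The decisive observation is that the outer index $n$ effectively ranges over an interval of cardinality at most $|I| + R - 1 \le H + R$, since $z_{n+s}$ is supported on $n + s \in I \subseteq (H, 2H]$ and $|I| \le H$. Applying Cauchy--Schwarz on this outer sum yields
$$R^{2}\bigg|\sum_{h \in I} z_h\bigg|^2 \le (H+R) \sum_{n \in \mathbb{Z}} \bigg|\sum_{s=1}^{R} z_{n+s}\bigg|^2.$$

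Next I expand the inner modulus squared as a double sum over $s_1, s_2 \in \{1, \ldots, R\}$, and substitute $h = n + s_1$ together with $r = s_2 - s_1$; for each fixed $r$ with $|r| < R$ the number of admissible ordered pairs $(s_1, s_2) \in \{1, \ldots, R\}^2$ with $s_2 - s_1 = r$ is exactly $R - |r|$, and the joint support condition collapses to $h, h+r \in I$. Summing over $n$ therefore produces
$$(H+R) \sum_{|r|<R} (R - |r|) \sum_{h,\, h+r \in I} z_h \overline{z}_{h+r},$$
and dividing through by $R^2$ gives the stated inequality; the boundary term $|r| = R$ may be adjoined harmlessly because its weight $1 - |r|/R$ vanishes.

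There is no substantive obstacle in this argument---it is purely bookkeeping. The only place where one could slip is in estimating the effective length of the outer $n$-range, which relies critically on $I \subseteq (H, 2H]$ to ensure $|I| \le H$ and thus the factor $H+R$ (rather than, say, $2H+R$) appearing in front.
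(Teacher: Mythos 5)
Your argument is correct and is exactly the standard Weyl differencing (shift, bound the outer range by $H+R$, Cauchy--Schwarz, then collect pairs by their difference $r$ with multiplicity $R-|r|$); the paper itself gives no proof but refers to Heath--Brown \cite{HB}, Lemma 5, whose proof is this same computation. The only cosmetic point is that for non-integer $H$ the interval $(H,2H]$ may contain $\lfloor H\rfloor+1$ integers, but the bound you actually use, $|I|+R-1\le H+R$, still holds, so the factor $\frac{H+R}{R}$ is unaffected.
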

Proof of Lemma \ref{vdc1} can be found in \cite{HB} Lemma 5, page 258.
\subsection{Fourier expansions}
Let us define $\Phi(x)=\{x\}-1/2$ and expand $\Phi$ in the Fourier series (see \cite{HB} Section 2), i.e. we obtain
\begin{align}\label{four1}
  \Phi(t)=\sum_{0<|m|\le M}\frac{1}{2\pi i m}e^{-2\pi imt}+O\left(\min\left\{1, \frac{1}{M\|t\|}\right\}\right),
\end{align}
for $M>0$, where $\|t\|=\min_{n\in\Z}|t-n|$ is the distance of $t\in\R$ to the nearest integer. Parameter $M$ will give us some margin of flexibility in our further calculations and will allow us to produce the estimates with  the decay acceptable for us. Moreover,
\begin{align}\label{four2}
  \min\left\{1, \frac{1}{M\|t\|}\right\}=\sum_{m\in\Z}b_m e^{2\pi imt},
\end{align}
where
\begin{align}\label{fcoe2}
  |b_m|\lesssim \min\left\{\frac{\log M}{M}, \frac{1}{|m|}, \frac{M}{|m|^2}\right\}.
\end{align}
\subsection{Basic facts from analytic number theory}
Throughout the paper, we will use the following version  of summation by parts (see \cite{Nat} Theorem A.4, page 304.)
\begin{lem}\label{sbp}
Assume that $a$ and $b$ are real numbers such that $0\le a<b$. Let $u(n)$ and $g(n)$ be arithmetic functions and $U(t)=\sum_{a< n\le t}u(n)$ be the sum function of $u(n)$.  If $g\in\mathcal{C}^1([a, b])$, then
$$\sum_{a<n\le b}u(n)g(n)=U(b)g(b)-\int_a^bU(t)g'(t)dt.$$
\end{lem}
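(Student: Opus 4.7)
The plan is to prove this classical Abel summation formula by combining the fundamental theorem of calculus with an elementary interchange of a finite sum and an integral. Since $g \in \mathcal{C}^1([a,b])$, for each integer $n$ with $a < n \le b$ we have
$$g(n) = g(b) - \int_n^b g'(t)\, dt.$$
Substituting into the target sum gives
$$\sum_{a < n \le b} u(n) g(n) = g(b) \sum_{a < n \le b} u(n) \ -\ \sum_{a < n \le b} u(n) \int_n^b g'(t)\, dt = U(b) g(b) - S,$$
where I set $S := \sum_{a < n \le b} u(n) \int_n^b g'(t)\, dt$. The remaining goal is to identify $S$ with $\int_a^b U(t) g'(t)\, dt$.

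To evaluate $S$, I would rewrite $\int_n^b g'(t)\, dt = \int_a^b \mathbf{1}_{[n,b]}(t)\, g'(t)\, dt$ and interchange the order of summation and integration, which is legitimate because only finitely many integers lie in $(a,b]$. This gives
$$S = \int_a^b g'(t) \bigg(\sum_{\substack{a < n \le b \\ n \le t}} u(n)\bigg) dt.$$
The key observation is that for $t \in [a,b]$ the index set $\{n : a < n \le b,\ n \le t\}$ coincides with $\{n : a < n \le t\}$, so the parenthesized sum equals exactly $U(t)$ by definition. Hence $S = \int_a^b U(t) g'(t)\, dt$, which combined with the previous display yields the claim.

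The only points requiring any care are the boundary conventions: one must check that $U$ is well-defined as a piecewise constant function on $[a,b]$ (with jumps at the integers in $(a,b]$), so that the Riemann integral $\int_a^b U(t) g'(t)\, dt$ makes sense, and that no contribution is lost or double-counted at the endpoints $t = a$ and $t = b$. Both of these checks are immediate from the definition $U(t) = \sum_{a < n \le t} u(n)$, since $U(t) = 0$ for $t$ just above $a$ and $U(b)$ absorbs the final integer contribution. I would not expect any substantive obstacle; the argument is essentially a two-line computation once the identity $g(n) = g(b) - \int_n^b g'(t)\, dt$ is in hand, and the swap of a finite sum with an integral needs no more than Fubini for counting measure against Lebesgue measure.
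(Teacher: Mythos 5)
Your argument is correct and complete: the identity $g(n)=g(b)-\int_n^b g'(t)\,dt$ from the fundamental theorem of calculus, followed by the interchange of the finite sum with the integral and the observation that $\sum_{a<n\le b,\ n\le t}u(n)=U(t)$ for $t\in[a,b]$, yields exactly the stated formula, and your remarks about the piecewise constancy of $U$ and the harmless measure-zero boundary points take care of the only delicate issues. Note that the paper itself gives no proof of this lemma but only cites Nathanson (Theorem A.4), where the standard argument runs slightly differently: one applies Abel's discrete summation by parts using $u(n)=U(n)-U(n-1)$ and then converts the resulting differences $g(n)-g(n+1)$ into integrals of $g'$ over intervals on which $U$ is constant. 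Your route via the fundamental theorem of calculus plus Fubini for counting measure against Lebesgue measure is an equally standard and arguably cleaner way to reach the same identity, so there is nothing to correct.
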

\noindent Let $\mu(n)$ be the M\"{o}bius function i.e.
$$\mu(n)=\left\{ \begin{array} {ll}
\ \ 1, & \mbox{if $n=1$,}\\
(-1)^k, & \mbox{if $n$ is the product of $k$ distinct primes,}\\
\ \ 0,& \mbox{if $n$ is divisible by the square of a prime.}
\end{array}
\right.$$
Therefore, $\mu(n)\not=0$ if and only if $n$ is square--free. Another important function for us will be
von Mangoldt's function $\Lambda(n)$ defined by
$$\Lambda(n)=\left\{ \begin{array} {ll}
\log p, & \mbox{if $n=p^m$ for some $m\in\N$ and $p\in\mathbf{P}$,}\\
\ \ 0,& \mbox{otherwise.}
\end{array}
\right.$$

For the estimates of exponential sums we will use
\begin{lem}[Vaughan's identity]\label{Vaug} Let $v, w$ be positive real numbers. If $v>n$ then
\begin{align}\label{vid}
  \Lambda(n)&=\sum_{\genfrac{}{}{0pt}{}{k_1k_2=n}{k_2\le w}}\log k_1\ \mu(k_2)-\sum_{\genfrac{}{}{0pt}{}{k_1k_2k_3=n}{k_2\le v, k_3\le w}}\Lambda(k_2)\mu(k_3)+\sum_{\genfrac{}{}{0pt}{}{k_1k_2=n}{k_1>v, k_2>w}}\Lambda(k_1)\bigg(\sum_{\genfrac{}{}{0pt}{}{d|k_2}{d>w}}\mu(d)\bigg)\\
  \nonumber&=\sum_{\genfrac{}{}{0pt}{}{kl=n}{l\le w}}\log k\ \mu(l)
  -\sum_{l\le vw}\sum_{kl=n}\Pi_{v, w}(l)+\sum_{\genfrac{}{}{0pt}{}{kl=n}{k>v, l>w}}\Lambda(k)\Xi_w(l),
\end{align}
where
\begin{align}\label{pixi}
  \Pi_{v, w}(l)=\sum_{\genfrac{}{}{0pt}{}{rs=l}{r\le v, s\le w}}\Lambda(r)\mu(s), \ \ \ \mbox{and}\ \ \ \ \Xi_{w}(l)=\sum_{\genfrac{}{}{0pt}{}{d|l}{d>w}}\mu(d).
\end{align}
\end{lem}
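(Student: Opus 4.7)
The approach is combinatorial, starting from the Dirichlet-convolution identity $\Lambda=\mu\ast\log$, i.e.
\begin{align*}
\Lambda(n) \;=\; \sum_{d\mid n}\mu(d)\log(n/d).
\end{align*}
The two truncation parameters $v$ and $w$ will be brought in by first splitting this M\"{o}bius sum at $d=w$, then expanding the tail $d>w$ via $\log(m)=\sum_{e\mid m}\Lambda(e)$ and splitting the resulting inner sum at $e=v$. The three pieces produced will correspond, after a short M\"{o}bius-inversion computation to eliminate a $\Lambda(n)$-term, to the three summands of the claimed identity.

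Concretely, I would first write
\begin{align*}
\Lambda(n)=\sum_{d\mid n,\,d\le w}\mu(d)\log(n/d)+\sum_{d\mid n,\,d>w}\mu(d)\log(n/d).
\end{align*}
After renaming $(d,n/d)=(k_2,k_1)$, the first piece is immediately Term~I. Substituting $\log(n/d)=\sum_{e\mid n/d}\Lambda(e)$ into the second piece produces the triple sum $\sum_{def=n,\,d>w}\mu(d)\Lambda(e)$, which I then split at $e=v$. The part with $e>v$ is precisely Term~III after the relabelling $k_1=e$ and $k_2=df$; the restriction $k_2>w$ listed in Term~III comes for free, because the inner M\"{o}bius sum $\sum_{d\mid k_2,\,d>w}\mu(d)$ is empty whenever $k_2\le w$.

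For the part with $e\le v$, the key observation is that by $\sum_{d\mid m}\mu(d)=\delta_{m,1}$,
\begin{align*}
\sum_{\substack{def=n\\ e\le v}}\mu(d)\Lambda(e)=\sum_{\substack{e\mid n\\ e\le v}}\Lambda(e)\sum_{d\mid n/e}\mu(d)=\Lambda(n)\,\mathbf{1}_{n\le v},
\end{align*}
which vanishes in the working regime where $n$ exceeds the truncation $v$. Consequently the restricted piece with $d>w$, $e\le v$ equals $-\sum_{def=n,\,d\le w,\,e\le v}\mu(d)\Lambda(e)$, which after relabelling $e\to k_2$, $d\to k_3$, $f\to k_1$ is exactly $-\,\text{Term~II}$. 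Summing the three contributions yields the first displayed form of Vaughan's identity; the second form, with $\Pi_{v,w}(l)$ and $\Xi_w(l)$, is then pure notation, obtained by collecting the triple sum in Term~II into an iterated sum over $l=k_2k_3\le vw$ and recognising $\Xi_w(l)=\sum_{d\mid l,\,d>w}\mu(d)$ in Term~III. The main (and essentially only) obstacle in the whole argument is the careful bookkeeping at the relabelling steps, together with verifying that the two ``automatic'' restrictions (namely $k_2>w$ in Term~III and the vanishing of $\Lambda(n)\mathbf{1}_{n\le v}$) are indeed free in the working regime.
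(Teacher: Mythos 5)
Your proof is correct: starting from $\Lambda=\mu*\log$, splitting the M\"obius variable at $w$, expanding $\log=\mathbf{1}*\Lambda$, splitting the new variable at $v$, and using $\sum_{d\mid m}\mu(d)=\delta_{m,1}$ to reduce the unrestricted piece to $\Lambda(n)\mathbf{1}_{n\le v}$ is a complete and standard derivation, and your two side remarks (that $k_2>w$ in the third term is automatic because $\Xi_w(k_2)=0$ for $k_2\le w$, and that $\Pi_{v,w}(l)=0$ for $l>vw$ so the second display is mere regrouping) are exactly the points that need checking. Note that the hypothesis as printed, ``$v>n$'', is a typo for $n>v$; your reading (the identity holds once $n$ exceeds the truncation $v$) is the intended one, and your computation shows precisely where it is used. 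The paper itself gives no proof of this lemma: it refers to \cite{IK} (Proposition 13.4) and \cite{GK} (Lemma 4.12), where the identity is obtained by the classical generating-function route, namely comparing coefficients of $n^{-s}$ for $n>v$ in
\begin{align*}
\frac{\zeta'(s)}{\zeta(s)}+V(s)=\Big(\frac{\zeta'(s)}{\zeta(s)}+V(s)\Big)\big(1-\zeta(s)W(s)\big)+\zeta'(s)W(s)+\zeta(s)V(s)W(s),
\end{align*}
with $V(s)=\sum_{m\le v}\Lambda(m)m^{-s}$ and $W(s)=\sum_{m\le w}\mu(m)m^{-s}$ (a vestige of this derivation even appears in the paper's source). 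The two approaches are equivalent in content: the Dirichlet-series identity packages all the bookkeeping into one line but requires introducing $\zeta$, $V$, $W$ and the coefficient-comparison step, whereas your elementary convolution argument is self-contained, works term by term, and makes the role of the cutoff $n>v$ and of the ``free'' constraints visible; either is acceptable here, since the lemma is only used as a black box in the estimation of the exponential sums in Section 5.
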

If $v=w$ (this will be our case) we will shortly write $\Pi_{v}(l)$ instead of $\Pi_{v, v}(l)$. Vaughan's identity will be critical for us. The proof of Lemma \ref{Vaug} can be found in \cite{IK} see Proposition 13.4, page 345 or in \cite{GK} Lemma 4.12, page 49.
\begin{thm}[Siegel--Walfisz]\label{swthm}
If $B>0$, $1\le q\le \log^BN$ and $(a, q)=1$, then
\begin{align}\label{sw}
  \psi(N; q, a)=\sum_{\genfrac{}{}{0pt}{}{p\in\mathbf{P}_{ N}}{p\equiv a(\mathrm{mod}q)}}\log p=\frac{N}{\phi(q)}+O\left(\frac{N}{\log^B N}\right),
\end{align}
for all $N\ge 2$, where $\phi$ denotes the Euler's function and the implied constant depends only on $B$.
\end{thm}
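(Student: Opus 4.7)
The plan is to prove this via the classical Dirichlet character machinery, since Siegel--Walfisz is a standard result and we may treat it as a matter of assembling well--known ingredients in the correct order. First, I would replace the sum over primes by the sum $\tilde\psi(N;q,a) = \sum_{n\le N,\, n\equiv a(\mathrm{mod}\,q)} \La(n)$ over prime powers, since the contribution of genuine higher prime powers is bounded by
\begin{equation*}
\sum_{k\ge 2}\sum_{p^k\le N}\log p \lesssim \sqrt{N}\log N,
\end{equation*}
which is absorbed into the error term $O(N/\log^B N)$.

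Next, I would invoke orthogonality of Dirichlet characters modulo $q$, writing
\begin{equation*}
\tilde\psi(N;q,a) = \frac{1}{\phi(q)}\sum_{\chi \bmod q}\overline{\chi}(a)\, \psi(N,\chi), \qquad \psi(N,\chi) = \sum_{n\le N}\La(n)\chi(n),
\end{equation*}
and split off the contribution of the principal character $\chi_0$. The principal character piece reproduces (up to an $O(\log^2 N)$ term from primes dividing $q$) the unrestricted Chebyshev function $\psi(N) = \sum_{n\le N}\La(n)$, which by the classical prime number theorem with the de la Vall\'ee Poussin zero--free region satisfies $\psi(N) = N + O\bigl(N\exp(-c\sqrt{\log N})\bigr)$. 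Combined with $\phi(q)^{-1}$ in front, this yields the main term $N/\phi(q)$ with an error well within $O(N/\log^B N)$.

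The heart of the matter, and the main obstacle, is controlling $\psi(N,\chi)$ for non--principal characters $\chi \bmod q$ uniformly in the range $q \le \log^B N$. For this I would appeal to the explicit formula
\begin{equation*}
\psi(N,\chi) = -\sum_{\rho:\, L(\rho,\chi)=0,\, |\Im\rho|\le T}\frac{N^\rho}{\rho} + O\!\left(\frac{N\log^2(qN)}{T}\right),
\end{equation*}
and then use the standard zero--free region for $L(s,\chi)$ of the form $\Re s > 1 - c/\log(q(|t|+2))$, together with Siegel's theorem to rule out exceptional real zeros: for every $\varepsilon > 0$ there is an (ineffective) constant $c(\varepsilon) > 0$ such that any real zero $\beta$ of $L(s,\chi)$ satisfies $\beta < 1 - c(\varepsilon)q^{-\varepsilon}$. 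Choosing $T = \exp(\sqrt{\log N})$ and using $q \le \log^B N$ with, say, $\varepsilon = 1/(2B)$ gives
\begin{equation*}
\psi(N,\chi) \lesssim_B N\exp\!\bigl(-c'\sqrt{\log N}\bigr),
\end{equation*}
which dominates $N/\log^B N$ for any fixed $B$. Summing the $\phi(q)-1$ non--principal contributions and dividing by $\phi(q)$ preserves the bound since $q\le \log^B N$, completing the proof. The genuinely hard input is Siegel's theorem, whose ineffective nature is inherited by the constant in the error term; the rest is bookkeeping that I would either carry out directly or simply cite from \cite{IK} or \cite{Nat}.
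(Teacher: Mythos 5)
The paper does not prove this theorem at all: it is stated as a classical result with a pointer to \cite{IK}, Corollary 5.29, and your sketch is exactly the standard orthogonality/explicit-formula/zero-free-region/Siegel argument that underlies that citation. Your outline is correct — the omitted bookkeeping (e.g.\ passing to primitive characters before applying the explicit formula, the lower-order terms in that formula) is routine, and you correctly identify Siegel's theorem as the source of the ineffective, $B$-dependent implied constant in the range $q\le \log^B N$.
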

For the proof of Siegel--Walfisz Theorem we refer to \cite{IK}, Corollary 5.29, page 124. Now using Theorem \ref{swthm} and formula \eqref{form} we derive the following.
\begin{thm}\label{swlthm}
Assume that $c\in [1, 12/11)$, $\g=1/c$, $h\in\mathcal{F}_c$ and $\vp$ be its inverse.
If $B>0$, $1\le q\le \log^BN$ and $(a, q)=1$, then
\begin{align}
 \label{swl} \psi_h(N; q, a)&=\sum_{\genfrac{}{}{0pt}{}{p\in\mathbf{P}_{h, N}}{p\equiv a(\mathrm{mod}q)}}\log p=\frac{\vp(N)}{\phi(q)}+O\left(\frac{\vp(N)}{\log^B N}\right),\\
 \label{swl1} \pi_h(N; q, a)&=\sum_{\genfrac{}{}{0pt}{}{p\in\mathbf{P}_{h, N}}{p\equiv a(\mathrm{mod}q)}}1=\frac{1}{\phi(q)}\frac{\vp(N)}{\log N}+O\left(\frac{\vp(N)}{\log^2N}\right),
\end{align}
for all $N\ge 2$, where the implied constant depends only on $h$ and $B$.
\end{thm}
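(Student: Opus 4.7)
The plan is to derive \eqref{swl} by combining Lemma \ref{formlem} at the frequency $\xi=0$ with Theorem \ref{swthm} and a summation by parts against $\vp'$, and then to deduce \eqref{swl1} from \eqref{swl} via a second summation by parts against $1/\log t$.

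First, I set $\xi=0$ in \eqref{form}. Since the polynomial error $O(N^{1-\chi-\chi'})$ is absorbed into $O(N/\log^B N)$ for any $B>0$, Theorem \ref{swthm} together with \eqref{form} yields, for every $t\ge 2$ and uniformly in $1\le q\le \log^B t$ with $(a,q)=1$,
$$T(t;q,a)\ :=\ \sum_{\substack{p\in\mathbf{P}_{h,t}\\ p\equiv a\,(\mathrm{mod}\,q)}}\vp'(p)^{-1}\log p\ =\ \frac{t}{\phi(q)}+O\bigl(t/\log^{B}t\bigr).$$
Next, I apply Lemma \ref{sbp} with $u(n)=\vp'(n)^{-1}\log n\cdot\mathbf{1}_{\{n\in\mathbf{P}_h,\ n\equiv a(\mathrm{mod}\,q)\}}$ and $g(t)=\vp'(t)$, giving
$$\psi_h(N;q,a)\ =\ T(N;q,a)\vp'(N)-\int_{x_0}^{N}T(t;q,a)\vp''(t)\,dt.$$
Substituting the formula above and using the identity $\int_{x_0}^{N}t\vp''(t)\,dt=N\vp'(N)-\vp(N)+O(1)$ (integration by parts), the main contribution collapses to $\vp(N)/\phi(q)$. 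The error integral is dominated by $\int_{x_0}^{N}\vp'(t)\log^{-B}t\,dt$, because Lemma \ref{funlemfi} gives $|t\vp''(t)|=|\vp'(t)(\b_2+\te_2(t))|\lesssim \vp'(t)$; a standard split-range argument at $t=\sqrt{N}$, exploiting the slowly-varying behaviour \eqref{slowhfi}, bounds this by $O(\vp(N)/\log^B N)$. This proves \eqref{swl}.

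For \eqref{swl1}, I apply Lemma \ref{sbp} once more with $u(n)=\log n\cdot\mathbf{1}_{\{n\in\mathbf{P}_h,\ n\equiv a(\mathrm{mod}\,q)\}}$ and $g(t)=1/\log t$, obtaining
$$\pi_h(N;q,a)\ =\ \frac{\psi_h(N;q,a)}{\log N}+\int_{2}^{N}\frac{\psi_h(t;q,a)}{t\log^2 t}\,dt.$$
Inserting \eqref{swl} into this expression, the boundary term produces $\vp(N)/(\phi(q)\log N)+O(\vp(N)/\log^{B+1}N)$, whereas the integral is $O(\vp(N)/\log^2 N)$ after another split-range estimate based on $\vp(t)/t\to 0$ from \eqref{ratefi}. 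Together these give \eqref{swl1}.

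The main obstacle is the bookkeeping of error terms through the two summation-by-parts manoeuvres, in particular verifying $\int_{x_0}^{N}\vp'(t)\log^{-B}t\,dt=O(\vp(N)/\log^B N)$ and $\int_{2}^{N}\vp(t)(t\log^2 t)^{-1}\,dt=O(\vp(N)/\log^2 N)$. Neither is hard: both reduce to split-range arguments combined with the slowly-varying properties of $\ell_\vp$ and the bound $|t\vp''(t)|\lesssim \vp'(t)$ recorded in Section \ref{sectf}.
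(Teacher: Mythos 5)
Your derivation is precisely the route the paper itself indicates for this theorem: it remarks that for $c\in[1,16/15)$ the statement ``can be easily derived with the aid of formula \eqref{form} with $\xi=0$, summation by parts and \eqref{sw}'', and your two partial--summation steps (first against $\vp'$, then against $1/\log t$), together with the split at $t=\sqrt{N}$ and the facts $t\vp'(t)\simeq\vp(t)$, $|t\vp''(t)|\lesssim\vp'(t)$ from Section \ref{sectf}, carry that sketch out correctly. Two small points of care: for $t\le\sqrt{N}$ the bound of Theorem \ref{swthm} is not available at level $t$ when $q$ is as large as $\log^B N$, so in the error integral you should use the trivial bound $T(t;q,a)\lesssim t$ on that range (your split at $\sqrt N$ accommodates this) and, for $t>\sqrt N$, apply Theorem \ref{swthm} with exponent $B+1$, absorbing $q\le\log^B N\le 2^B\log^B t$; and in the second step the bound $\int_{\sqrt N}^{N}\vp(t)t^{-1}\log^{-2}t\,dt\lesssim \vp(N)\log^{-2}N$ should be justified via $\vp(t)/t\simeq\vp'(t)$ and $\int\vp'=\vp$, not merely via $\vp(t)/t\to0$ (the crude bound $\vp(t)\le\vp(N)$ only yields $\vp(N)/\log N$).

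The genuine shortfall is the range of $c$. The theorem is stated for $c\in[1,12/11)$, but Lemma \ref{formlem} --- your only input beyond classical Siegel--Walfisz --- is stated (and provable by the paper's exponential--sum estimates) only for $c\in[1,16/15)$: the hypothesis $16(1-\g)+28\chi<1$ with $\chi>0$ forces $\g>15/16$. Hence your argument establishes \eqref{swl} and \eqref{swl1} only for $c<16/15$ and cannot reach $c\in[16/15,12/11)$. The paper does not close this gap analytically either: it attributes the full--range statement to Leitmann \cite{Leit} and offers the \eqref{form}--based derivation only for the smaller range, which is all that is used elsewhere in the paper (the later sections assume $c\in[1,16/15)$ or $c\in[1,72/71)$). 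So either quote Leitmann for $16/15\le c<12/11$, or restrict the statement you prove to $c\in[1,16/15)$; as written, your proof leaves that subrange unsupported.
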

Theorem \ref{swlthm} was proved by Leitmann in \cite{Leit}. For $c\in[1, 16/15)$ the proof can be easily derived with the aid of formula \eqref{form} with $\xi=0$, summation by parts and \eqref{sw}.

 \section{Restriction theorem for the set $\mathbf{P}_h$ and the proof of theorem \ref{HLthm}}\label{sectres}
 This section is intended to prove Theorem \ref{bgthm1}, which we will call a restriction theorem for the set $\mathbf{P}_h$. The case of the prime numbers $\mathbf{P}$, see Theorem \ref{bgthm} below, was proved by Bourgain in \cite{B} and recently it has been rediscovered by Green \cite{G} in the context of arithmetic progressions. Throughout this section we will assume that $c\in[1, 16/15)$, $\g=1/c$, $h\in\mathcal{F}_c$ and $\vp$ is the inverse function to $h$. Moreover, $r'$ will denote the conjugate exponent to $r>1$, i.e. $\frac{1}{r}+\frac{1}{r'}=1$. We begin by recalling the results of Green from \cite{G} and by introducing necessary notation. Let $b\in\N\cup\{0\}$, $m, N\in\N$ such that $1\le m\le \log N$ and $0\le b\le m-1$ with $(b, m)=1$.  Define a set
 \begin{align*}
   \Lambda_{b, m, N}=\{0\le n\le N: mn+b\in\mathbf{P}\}.
 \end{align*}
  It is easy to see that $\Lambda_{b, m, N}$ has size about $mN/\phi(m)\log(mN)$ by Siegel--Walfisz theorem. Let us define a measure $\la_{b, m, N}$ on $\Lambda_{b, m, N}$ by setting
 \begin{align*}
   \la_{b, m, N}(n)=\left\{ \begin{array} {ll}
\frac{\phi(m)\log(mn+b)}{mN}, & \mbox{if $n\in\Lambda_{b, m, N}$,}\\
0, & \mbox{otherwise.}
\end{array}
\right.
 \end{align*}
 Let $\mathcal{F}_{\Z}[f](\xi)=\sum_{n\in\Z}f(n)e^{2\pi i \xi n}$ denotes the Fourier transform on $\Z$ and $\widehat{f}(n)=\int_{\mathbb{T}}f(\xi)e^{-2\pi i \xi n}d\xi$ denotes the Fourier transform on $\T$. For any measure space $X$ let $\mathcal{C}(X)$ denotes the space of all continuous functions on $X$ and define a linear operator $T: \mathcal{C}(\Lambda_{b, m, N})\to \mathcal{C}(\T)$ as follows
 \begin{align*}
   T(f)(\xi)=\mathcal{F}_{\Z}[f\la_{b, m, N}](\xi).
 \end{align*}
 \begin{thm}[Bourgain--Green]\label{bgthm}
 Suppose that $r>2$ is a real number. Then there is a finite constant $C_r>0$ such that for all functions
 $f\in L^2(\Lambda_{b, m, N}, \lambda_{b, m, N})$ we have
 \begin{align}\label{gthm1}
   \|Tf\|_{L^r(\T)}\le C_rN^{-1/r}\|f\|_{L^2(\Lambda_{b, m, N}, \lambda_{b, m, N})}.
 \end{align}
 \end{thm}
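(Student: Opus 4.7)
The plan is to prove Theorem~\ref{bgthm} via the Bourgain--Tomas--Stein restriction framework. By the $TT^*$ principle, $\|T\|_{L^2(\la_{b,m,N})\to L^r(\T)}^2=\|TT^*\|_{L^{r'}(\T)\to L^r(\T)}$, and a short duality calculation yields $TT^*g=g*\tilde{\la}_{b,m,N}$, where $\tilde{\la}_{b,m,N}(\xi)=\mathcal{F}_{\Z}[\la_{b,m,N}](\xi)$. Young's convolution inequality, with conjugate exponents $(r',r/2,r)$ satisfying $1+1/r=1/r'+2/r$, reduces the theorem to the $L^{r/2}$ bound
\begin{align*}
\int_{\T}|\tilde{\la}_{b,m,N}(\xi)|^{r/2}\,d\xi\lesssim_r N^{-1},\qquad r>2.
\end{align*}

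The next step is a Hardy--Littlewood circle-method decomposition of $\T$: at scale $Q=(\log N)^B$, one defines the major arcs $\mathcal{M}$ as the disjoint union of intervals $\{|\xi-a/q|\le Q/(qN)\}$ taken over coprime pairs $(a,q)$ with $q\le Q$, and sets $\mathfrak{m}=\T\setminus\mathcal{M}$. On each major arc, Siegel--Walfisz (Theorem~\ref{swthm}) furnishes the approximation
\begin{align*}
\tilde{\la}_{b,m,N}(a/q+\beta)=\frac{c_{b,m}(q,a)}{\phi(q)}\cdot K_N(\beta)+O\bigl((\log N)^{-B}\bigr),
\end{align*}
with a Ramanujan-type coefficient $|c_{b,m}(q,a)|\lesssim|\mu(q)|$ and the Fejer-type kernel $K_N(\beta)=N^{-1}\sum_{n\le N}e^{2\pi i\beta n}$, which satisfies $|K_N(\beta)|\lesssim\min(1,(N|\beta|)^{-1})$. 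Since $\int_{\R}\min(1,(N|\beta|)^{-1})^{r/2}\,d\beta\lesssim_r 1/N$ for $r>2$, the major-arc contribution to $\int|\tilde{\la}_{b,m,N}|^{r/2}$ is bounded by $N^{-1}\sum_{q\le Q}|\mu(q)|^{r/2}/\phi(q)^{r/2-1}$, that is $(\log N)^{O(B)}/N$; the polylogarithmic loss will be absorbed by the minor-arc decay.

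The main obstacle is the $L^\infty$ minor-arc estimate $\sup_{\xi\in\mathfrak{m}}|\tilde{\la}_{b,m,N}(\xi)|\lesssim_A(\log N)^{-A}$ for arbitrarily large $A$. I would derive it via Vaughan's identity (Lemma~\ref{Vaug}) with $v=w=N^{1/3}$: Type~I sums succumb to partial summation (Lemma~\ref{sbp}), whereas the Type~II sums are treated by the Weyl--Van der Corput inequality (Lemma~\ref{vdc1}) combined with Van der Corput's second derivative test (Lemma~\ref{vdc}); the hypothesis $\xi\in\mathfrak{m}$ guarantees that no rational approximation spoils the oscillation. Combined with the trivial Plancherel estimate $\|\tilde{\la}_{b,m,N}\|_{L^2(\T)}^2\lesssim(\log N)/N$ (from Siegel--Walfisz at $\xi=0$), the elementary log-convexity bound
\begin{align*}
\int_{\mathfrak{m}}|\tilde{\la}_{b,m,N}|^{r/2}\le\|\tilde{\la}_{b,m,N}\|_{L^\infty(\mathfrak{m})}^{r/2-2}\|\tilde{\la}_{b,m,N}\|_{L^2}^2\lesssim(\log N)^{1-A(r/2-2)}N^{-1}
\end{align*}
settles the case $r\ge 4$, with $A$ chosen large enough to dominate the $(\log N)^{O(B)}$ loss incurred on the major arcs. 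For $2<r<4$ the $L^\infty$--$L^2$ interpolation is too weak to deliver the sharp $N$-power; in that range I would instead invoke Riesz--Thorin complex interpolation on an analytic family of truncated operators, or a dyadic level-set argument in the spirit of Bourgain, to recover the same $N^{-1}$ bound.
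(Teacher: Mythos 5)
The paper never proves this theorem: it is imported as a black box from Bourgain \cite{B} and Green \cite{G}, so your attempt has to be measured against those proofs. Against that benchmark there is a genuine gap, and it sits exactly where the real work is. Your reduction via Young's inequality requires the bound $\int_{\T}\big|\mathcal{F}_{\Z}[\lambda_{b,m,N}](\xi)\big|^{r/2}d\xi\lesssim_r N^{-1}$, and this inequality is \emph{false} for $2<r\le 4$. At $r=4$ Parseval gives $\int_{\T}|\mathcal{F}_{\Z}[\lambda_{b,m,N}]|^{2}d\xi=\sum_{n}\lambda_{b,m,N}(n)^{2}\simeq \log N/N$, and for $2<r<4$ the major arcs with $q\le(\log N)^{B}$, whose peaks of height $\simeq|\mu(q)|/\phi(q)$ are genuinely present by Siegel--Walfisz, already force $\int_{\T}|\mathcal{F}_{\Z}[\lambda_{b,m,N}]|^{r/2}\gtrsim N^{-1}(\log N)^{c(B)}$ since $\sum_{q\le Q}\phi(q)^{1-r/2}$ diverges in $Q$ when $r\le4$. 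This loss lives on the major arcs, so it cannot be ``absorbed by the minor-arc decay'', and it cannot be removed by Riesz--Thorin either: the natural $L^{2}\to L^{2}$ endpoint itself carries the factor $(\log N/N)^{1/2}$, so any interpolation leaves a positive power of $\log N$ for every finite $r$. Consequently your scheme, even if all minor-arc details are filled in, proves the statement only for $r>4$ (not $r\ge4$, by the Parseval computation), whereas the theorem is applied in this paper precisely for $r\in\big(\tfrac{26-24\g}{16\g-15},3\big)\subset(2,3)$. The whole content of the Bourgain/Green argument is the piece you defer to a single clause: one must exploit more of $f$ than the $L^{r'}$ norm surviving Young's inequality, e.g. by bounding the measure of the level sets $\{\xi:|\mathcal{F}_{\Z}[f\lambda_{b,m,N}](\xi)|>\delta\}$ through the observation that $\delta$-separated frequencies where the transform is large must concentrate on few major arcs (Bourgain's $\Lambda(p)$-style argument, reproduced in \cite{G}; see also \cite{GT1} for the sieve-theoretic route). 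Without that argument the range $2<r\le4$ is not ``recovered'' --- the quantity you propose to bound simply does not obey the bound.

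A secondary, fixable point: for the minor-arc estimate you propose Weyl--Van der Corput followed by the second-derivative test (Lemma \ref{vdc}). Here the phase is $\xi kl$, linear in each variable, so its second derivative vanishes and Lemma \ref{vdc} yields nothing; the curvature-based lemmas of this paper are designed for the phases $m\vp(kl)$, not for the plain prime exponential sum. The standard treatment of the Type I/II sums after Vaughan's identity uses $\big|\sum_{l}e^{2\pi i \xi kl}\big|\le\min\big(L,\|k\xi\|^{-1}\big)$ together with the classical bound for $\sum_{k}\min(N/k,\|k\xi\|^{-1})$, which is where the Diophantine information $\xi\in\mathfrak{m}$ actually enters. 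With that substitution your minor-arc step is standard, but the main gap described above remains.
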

 Before we formulate a counterpart of Bourgain--Green's theorem for $\mathbf{P}_h$, let us introduce a set
 \begin{align*}
   \Lambda_{b, m, N}^h=\{0\le n\le N: mn+b\in\mathbf{P}_h\}.
 \end{align*}
 According to Theorem \ref{swlthm} the set $\Lambda_{b, m, N}^h$ has size comparable to $\vp(mN)/\phi(m)\log(mN)$. Therefore,  likewise above, it is natural to define a measure $\lambda_{b, m, N}^h$ on $\Lambda_{b, m, N}^h$ by setting
  \begin{align*}
   \la_{b, m, N}^h(n)=\left\{ \begin{array} {ll}
\frac{\phi(m)\log(mn+b)}{mN\vp'(mn+b)}, & \mbox{if $n\in\Lambda_{b, m, N}^h$,}\\
0, & \mbox{otherwise.}
\end{array}
\right.
 \end{align*}
 Our task now is to prove a restriction theorem for the set $\mathbf{P}_h$.
  \begin{thm}\label{bgthm1}
  Assume that $c\in[1, 16/15)$, $\g=1/c$, $h\in\mathcal{F}_c$ and $\vp$ be its inverse. Suppose that $r>\frac{26-24\g}{16\g-15}$ is a real number. Then there is a finite constant $C_{r, \g}>0$ such that for all functions
 $f\in L^2(\Lambda_{b, m, N}^h, \lambda_{b, m, N}^h)$ we have
 \begin{align}\label{gthm11}
   \|T_hf\|_{L^r(\T)}\le C_{r, \g}N^{-1/r}\|f\|_{L^2(\Lambda_{b, m, N}^h, \lambda_{b, m, N}^h)},
 \end{align}
 where $T_h: \mathcal{C}(\Lambda_{b, m, N}^h)\to \mathcal{C}(\T)$ is a linear operator given by
 \begin{align*}
   T_h(f)(\xi)=\mathcal{F}_{\Z}[f\la_{b, m, N}^h](\xi).
 \end{align*}
 \end{thm}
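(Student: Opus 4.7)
The strategy, as advertised in the introduction, is to deduce Theorem \ref{bgthm1} from Bourgain--Green's restriction theorem (Theorem \ref{bgthm}) by transferring the problem from $\mathbf{P}_h$ to $\mathbf{P}$ via Lemma \ref{formlem} and estimating the resulting error term.

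By the classical $TT^*$ identity, the restriction inequality \eqref{gthm11} is equivalent to the convolution operator bound
\[
\|g \ast \mu_h\|_{L^r(\T)} \le C_{r,\g}\, N^{-2/r}\, \|g\|_{L^{r'}(\T)},\qquad g \in L^{r'}(\T),
\]
where $\mu_h(\xi) := \sum_{n \in \Lambda_{b,m,N}^h} \lambda_{b,m,N}^h(n)\, e^{2\pi i n \xi}$; Theorem \ref{bgthm} is equivalent to the analogous inequality with $\mu_h, \lambda_{b,m,N}^h$ replaced by the prime counterparts $\mu, \lambda_{b,m,N}$. Applying Lemma \ref{formlem} with $q = m$, $a = b$ and the substitution $\xi \mapsto \xi/m$ in the exponential, then dividing by $mN$, yields the uniform pointwise estimate
\[
\sup_{\xi \in \T}\, |\mu_h(\xi) - \mu(\xi)| \lesssim N^{-\chi - \chi' + o(1)},
\]
the $o(1)$ absorbing the factor $\phi(m) \le m \le \log^B N$. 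Parseval, combined with the explicit sizes of $\lambda_{b,m,N}^h$ and $\lambda_{b,m,N}$ (namely $\lambda^h_{b,m,N}(n) \simeq \log N \cdot N^{-\g}$ on a set of cardinality $\simeq N^{\g}/\log N$, while $\lambda_{b,m,N}(n) \simeq \log N \cdot N^{-1}$ on a set of cardinality $\simeq N/\log N$), furnishes an $L^2(\T)$ bound on $\mu_h - \mu$, and interpolation then gives an $L^{r/2}(\T)$ bound for every $r \ge 2$.

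Decomposing $\mu_h = \mu + (\mu_h - \mu)$, the main convolution $g \ast \mu$ is controlled directly by Theorem \ref{bgthm} in its $TT^*$ form. The error $g \ast (\mu_h - \mu)$ is handled by Young's convolution inequality,
\[
\bigl\|g \ast (\mu_h - \mu)\bigr\|_{L^r(\T)} \;\le\; \|g\|_{L^{r'}(\T)}\, \|\mu_h - \mu\|_{L^{r/2}(\T)} \qquad (r \ge 2),
\]
together with the interpolated $L^{r/2}$ bound. Demanding that the resulting contribution is $\lesssim N^{-2/r} \|g\|_{L^{r'}}$, and then optimizing $\chi$ subject to the Lemma \ref{formlem} constraint $16(1 - \g) + 28\chi < 1$, pins down the admissible range $r > (26 - 24\g)/(16\g - 15)$.

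The principal difficulty will lie in the interpolation in this last step: one must carefully track the slowly varying factors $\ell_{\vp}$ attached to the inverse $\vp$, the logarithmic losses, and the $\phi(m)/m$ dependence, so as to preserve uniformity in $b$ and $m \le \log^B N$, and perform the $L^{\infty}$--$L^2$ interpolation tightly enough to extract the sharp threshold $r_0(\g) = (26 - 24\g)/(16\g - 15)$. Its divergence as $\g \downarrow 15/16$ faithfully reflects the boundary of validity of Lemma \ref{formlem}, i.e., the edge of the Van der Corput/Vaughan-identity methods that underlie the asymptotic \eqref{form} for $h \in \mathcal{F}_c$.
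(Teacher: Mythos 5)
Your overall architecture ($TT^*$ reduction, writing $\mathcal{F}_{\Z}[\la_{b,m,N}^h]=\mathcal{F}_{\Z}[\la_{b,m,N}]+\text{error}$, Bourgain--Green for the main term, and Lemma \ref{formlem} for the uniform bound $\sup_{\xi}|\mathcal{F}_{\Z}[\la_{b,m,N}^h-\la_{b,m,N}](\xi)|\lesssim N^{-\chi-\chi'}$) is the same as the paper's, but the way you convert the error-kernel information into an $L^{r'}(\T)\to L^{r}(\T)$ bound has a genuine gap. Young's inequality forces you to control $\|\mathcal{F}_{\Z}[\la_{b,m,N}^h-\la_{b,m,N}]\|_{L^{r/2}(\T)}$ by $N^{-2/r}$, and you propose to get the $L^{r/2}$ norm by interpolating a Parseval $L^2(\T)$ bound with the $L^{\8}$ bound. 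First, in the range that matters ($r$ near the threshold, which is near $2$ for $\g$ close to $1$, and in any case $r<3$ for the Roth application) one has $r/2<2$, so $L^{r/2}$ is not intermediate between $L^2$ and $L^{\8}$; you would have to pass through $\|K\|_{L^1}\le\|K\|_{L^2}$ and interpolate $L^1$--$L^{\8}$. Second, and decisively, Parseval only gives $\|\mathcal{F}_{\Z}[\la_{b,m,N}^h-\la_{b,m,N}]\|_{L^2(\T)}=\|\la_{b,m,N}^h-\la_{b,m,N}\|_{\ell^2(\Z)}\approx N^{-\g/2+o(1)}$ (coefficients of size about $\log^2 N/\vp(N)$ supported on about $\vp(N)/\log N$ integers), which is only the square root of what is needed. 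Running your scheme one is forced to require roughly $\g/r+\chi(1-2/r)\ge 2/r$, i.e. $\chi(r-2)\ge 2-\g$; since the constraint $16(1-\g)+28\chi<1$ forces $\chi<1/28$, this yields only $r>2+28(2-\g)>30$, nowhere near $\frac{26-24\g}{16\g-15}$, and in particular useless for the application which needs some admissible $r<3$.

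The paper avoids this loss by interpolating operator norms rather than kernel norms: convolution with $\mathcal{F}_{\Z}[\la_{b,m,N}^h-\la_{b,m,N}]$ is a Fourier multiplier, so by Plancherel its $L^2(\T)\to L^2(\T)$ norm equals $\|\la_{b,m,N}^h-\la_{b,m,N}\|_{\ell^{\8}(\Z)}\lesssim \log^2N/\vp(N)\approx N^{-\g+o(1)}$ --- the square of your Parseval bound --- while its $L^1\to L^{\8}$ norm is $\lesssim N^{-\chi-\e}$ by Lemma \ref{formlem}. Riesz--Thorin between these two gives the $L^{r'}\to L^{r}$ norm $\lesssim \big(\log^2N/\vp(N)\big)^{2/r}\,N^{-(\chi+\e)(1-2/r)}$, and choosing $\chi=\frac{2(1-\g)}{r-2}$ subject to $16(1-\g)+28\chi<1$ is precisely what produces the threshold $r>\frac{26-24\g}{16\g-15}$. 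So the missing idea is to exploit orthogonality (smallness of the individual Fourier coefficients of the error measure) at the $L^2\to L^2$ level; no amount of careful bookkeeping of the slowly varying factors, logarithms, or the $\phi(m)/m$ dependence can rescue the Young-inequality route.
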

 \begin{proof}
 In the proof we will exploit Green's ideas from \cite{G} reducing the matters to Theorem \ref{bgthm}. As in \cite{G} the main tool will be $TT^*$ argument and an appropriate interpolation giving some restriction on the range of $r>\frac{26-24\g}{16\g-15}$. Let us briefly recall the role of $TT^*$ method. Firstly, notice that the relation
 \begin{align*}
   \langle T_hf, g\rangle_{L^2(\T)}=\int_{\T}\mathcal{F}_{\Z}[f\la_{b, m, N}^h](\xi)\overline{g(\xi)}d\xi=
   \sum_{n\in\Z}f(n)\overline{\widehat{g}(n)}\la_{b, m, N}^h(n)=\langle f, T^*_hg\rangle_{L^2(\Lambda_{b, m, N}^h, \lambda_{b, m, N}^h)},
 \end{align*}
 shows that the operator $T^*_h:\mathcal{C}(\T)^*\to\mathcal{C}(\Lambda_{b, m, N}^h)^*=\mathcal{C}(\Lambda_{b, m, N}^h)$ is given by
 \begin{align*}
   T^*_h(g)(n)=\widehat{g}(n)|_{\Lambda_{b, m, N}^h}=\widehat{g}(n)\cdot\mathbf{1}_{\Lambda_{b, m, N}^h}(n).
 \end{align*}
 Therefore, we have that the map $T_hT^*_h:\mathcal{C}(\T)^*\to\mathcal{C}(\T)^*$ is given by
 \begin{align*}
   T_hT^*_hf(\xi)=f*\mathcal{F}_{\Z}[\la_{b, m, N}^h](\xi).
 \end{align*}
 In the sequel we will consider the operator $T_hT^*_h$ as a mapping acting on $L^r(\T)$ spaces (it makes sense, since $L^r(\T)$ naturally embeds into $\mathcal{C}(\T)^*$ for any $r\ge1$).
 Now it is easy to see that
 \begin{align*}
   \|T_hf\|_{L^r(\T)}&
   \le \|T_hT_h^*\|^{1/2}_{L^{r'}(\T)\to L^{r}(\T)}\|f\|_{L^2(\Lambda_{b, m, N}^h, \lambda_{b, m, N}^h)} ,
 \end{align*}
 which is the heart of the matter and allows us to prove that  $T_hT_h^*$ satisfies the bound
 \begin{align*}
   \|T_hT_h^*\|_{L^{r'}(\T)\to L^{r}(\T)}\le C_{r, \g}N^{-2/r}.
 \end{align*}
 The strategy of our proof will be based on the reduction of our estimate to the estimate from Bourgain--Green's restriction theorem. For this purpose we will proceed as follows. For every $r>\frac{26-24\g}{16\g-15}\ge2$ observe that
 \begin{align*}
   \|T_hT_h^*f\|_{L^{r}(\T)}&=\|f*\mathcal{F}_{\Z}[\la_{b, m, N}^h]\|_{L^{r}(\T)}\\
   &\le
   \|f*\mathcal{F}_{\Z}[\la_{b, m, N}]\|_{L^{r}(\T)}
   +\|f*\mathcal{F}_{\Z}[\la_{b, m, N}^h-\la_{b, m, N}]\|_{L^{r}(\T)}\\
   &\le \|TT^*\|_{L^{r'}(\T)\to L^{r}(\T)}\|f\|_{L^{r'}(\T)}+\|f*\mathcal{F}_{\Z}[\la_{b, m, N}^h-\la_{b, m, N}]\|_{L^{r}(\T)}.
 \end{align*}
 In view of Bourgain--Green's theorem $\|TT^*\|_{L^{r'}(\T)\to L^{r}(\T)}\le C_rN^{-2/r}$ for every $r>2$. Therefore, it only remains to deal with the $L^r(\T)$ norm of the error term. Namely, we will be concerned with illustrating that for any $r>\frac{26-24\g}{16\g-15}$ we have
 \begin{align*}
   \|f*\mathcal{F}_{\Z}[\la_{b, m, N}^h-\la_{b, m, N}]\|_{L^{r}(\T)}\le C_{r, \g}N^{-2/r}\|f\|_{L^{r'}(\T)}.
 \end{align*}
 In order to achieve this bound it is convenient to find firstly, an $L^{2}(\T)\to L^{2}(\T)$ estimate, secondly an
 $L^{1}(\T)\to L^{\8}(\T)$ estimate and interpolate between them. Notice that
 \begin{align}\label{l2}
   \|f*\mathcal{F}_{\Z}[\la_{b, m, N}^h-\la_{b, m, N}]&\|_{L^{2}(\T)}=\|\widehat{f}(\la_{b, m, N}^h-\la_{b, m, N})\|_{\ell^{2}(\Z)}\\
   \nonumber &\le \|\la_{b, m, N}^h-\la_{b, m, N}\|_{\ell^{\8}(\Z)}\|\widehat{f}\|_{\ell^2(\Z)}\\
   \nonumber &=
   \|\la_{b, m, N}^h-\la_{b, m, N}\|_{\ell^{\8}(\Z)}\|f\|_{L^2(\T)}\\
   \nonumber&\le\big(\|\la_{b, m, N}^h\|_{\ell^{\8}(\Z)}+\|\la_{b, m, N}\|_{\ell^{\8}(\Z)}\big)\|f\|_{L^2(\T)}\lesssim \frac{\log^2 N}{\vp(N)}\|f\|_{L^2(\T)}.
 \end{align}
 On the other hand, we see that
 \begin{align*}
   \mathcal{F}_{\Z}[\la_{b, m, N}^h-\la_{b, m, N}](\xi)=\frac{\phi(m)}{mN}\bigg(\sum_{\genfrac{}{}{0pt}{}{p\in[b, mN+b]\cap\mathbf{P}_h}{p\equiv b(\mathrm{mod} m)}}\vp'(p)^{-1}\log p\ e^{2\pi i \xi p}-\sum_{\genfrac{}{}{0pt}{}{p\in[b, mN+b]\cap\mathbf{P}}{p\equiv b(\mathrm{mod} m)}}\log p\ e^{2\pi i \xi p}\bigg).
 \end{align*}
 Therefore, Lemma \ref{formlem} yields that
 \begin{align}\label{ln}
   \|f*\mathcal{F}_{\Z}[\la_{b, m, N}^h-\la_{b, m, N}]\|_{L^{\8}(\T)}&\le\|\mathcal{F}_{\Z}[\la_{b, m, N}^h-\la_{b, m, N}]\|_{L^{\8}(\T)}\|f\|_{L^1(\T)}\lesssim \frac{1}{N^{\chi+\e}}\|f\|_{L^1(\T)},
 \end{align}
 for any $\chi>0$ such that $16(1-\g)+28\chi<1$ and some $\e>0$. Thus Riesz--Thorin interpolation theorem guarantees (since $\frac{1}{r}=\frac{1-\te}{2}$) that
 \begin{multline*}
   \|f*\mathcal{F}_{\Z}[\la_{b, m, N}^h-\la_{b, m, N}]\|_{L^{r}(\T)}
   \le\|\la_{b, m, N}^h-\la_{b, m, N}\|_{\ell^{\8}(\Z)}^{2/r}\cdot\|\mathcal{F}_{\Z}[\la_{b, m, N}^h-\la_{b, m, N}]\|_{L^{\8}(\T)}^{1-2/r}\cdot\|f\|_{L^{r'}(\T)}\\
   \ \ \ \ \ \ \ \ \ \ \ \ \ \lesssim \left(\frac{\log^2 N}{\vp(N)}\right)^{2/r}\cdot\left(\frac{1}{N^{\chi+\e}}\right)^{1-2/r}\|f\|_{L^{r'}(\T)}
   \lesssim N^{-2/r}\cdot \left(\frac{1}{N^{\g-\d-1}}\right)^{2/r}\cdot\left(\frac{1}{N^{\chi+\e}}\right)^{1-2/r}\|f\|_{L^{r'}(\T)},
 \end{multline*}
for appropriately small $\d>0$, since $x^{\g-\e_1}\lesssim_{\e_1}\vp(x)$ and $\log x\lesssim_{\e_2} x^{\e_2}$ for suitable choice of $\e_1, \e_2>0$. Thus it remains to verify that
$2(\g-\d-1)/r+(1-2/r)(\chi+\e)>0\Longleftrightarrow(r-2)(\chi+\e)/2>1-\g+\d$. If $\g=1$ there is nothing to do, we take $0<\d<(r-2)(\chi+\e)/2$. If $\g\in(15/16, 1)$ then it suffices to take $\chi=\frac{2(1-\g)}{r-2}>0$ and $0<\d<\frac{\e(r-2)}{2}$, since
\begin{align*}
16(1-\g)&+28\chi<1\Longleftrightarrow 16(1-\g)(r-2)+56(1-\g)<r-2\\
&\Longleftrightarrow16r(1-\g)+24(1-\g)<r-2\Longleftrightarrow\frac{2+24(1-\g)}{1-16(1-\g)}<r
\Longleftrightarrow\frac{26-24\g}{16\g-15}<r,
\end{align*}
and the proof of Theorem \ref{bgthm1} is completed.
\end{proof}
Now we finish this section by proving Theorem \eqref{HLthm}.
\begin{proof}[Proof of Theorem \ref{HLthm}] Let $(a_n)_{n\in\N}$ be a sequence of complex numbers such that
$|a_n|\le1$ for any $n\in\N$. It suffices to use Theorem \ref{bgthm1} with $m=1$, $b=0$ and $f(n)=\frac{a_n\vp'(n)}{\log n}$. Then for any $r>\frac{26-24\g}{16\g-15}$ we have
\begin{align*}
 \int_{\T}\bigg|\sum_{p\in\mathbf{P}_{h, N}}f(p)\vp'(p)^{-1}\log p\ e^{2\pi i \xi p}\bigg|^rd\xi
 \lesssim_rN^{r/2-1}\bigg(\sum_{p\in\mathbf{P}_{h, N}}f(p)^2\vp'(p)^{-1}\log p\bigg)^{r/2}.
\end{align*}
Thus
\begin{align*}
 \int_{\T}\bigg|\sum_{p\in\mathbf{P}_{h, N}}a_p\ e^{2\pi i \xi p}\bigg|^rd\xi
 \lesssim_rN^{r/2-1}\bigg(\sum_{p\in\mathbf{P}_{h, N}}\frac{\vp'(p)}{\log p}\bigg)^{r/2}\lesssim_r \frac{1}{N}\left(\frac{\vp(N)}{\log N}\right)^r,
\end{align*}
since summation by parts implies that
\begin{align*}
  \sum_{p\in\mathbf{P}_{h, N}}\frac{\vp'(p)}{\log p}&\lesssim\frac{\vp'(N)\vp(N)}{\log^2 N}+\int_2^N\frac{\vp(x)}{\log x}\frac{|x^2\vp''(x)\log x-x\vp'(x)|}{x^2\log^2x}dx\\
  &\lesssim\frac{\vp(N)^2}{N\log^2 N}+\frac{\vp(N^{\e})^2}{\log^2N}+\frac{\vp(N)^2}{N\log^2N}\int_{N^{\e}}^N\frac{dx}{x\log x}\lesssim\frac{\vp(N)^2}{N\log^2 N},
\end{align*}
for sufficiently small $\e>0$. Finally, it is not difficult to see that
\begin{align*}
  \int_{\T}\bigg|\sum_{p\in\mathbf{P}_{h, N}}e^{2\pi i \xi p}\bigg|^rd\xi\gtrsim\int_{|\xi|\le1/(100N)}\bigg|\sum_{p\in\mathbf{P}_{h, N}}e^{2\pi i \xi p}\bigg|^rd\xi\gtrsim \frac{1}{N}\left(\frac{\vp(N)}{\log N}\right)^r.
\end{align*}
This completes the proof.
\end{proof}

\section{Proof of Theorem \ref{Roththm}}\label{sectroth}
In this section our main result will be proved. The scheme of the proof is similar in spirit to Green's proof \cite{G}. We encourage the reader to compare this section with Section 6 form \cite{G}. However, due to some
technical differences we will present all the details.  First of all we prove a transference principle which allows us to throw our problem to positive integers, after that we will make use of the restriction theorem for the set $\mathbf{P}_h$ -- see Theorem \ref{bgthm1}, and finally, thanks to Sanders's refinements of Roth theorem \cite{San2}, we conclude the proof. Throughout this section we will assume that $c\in[1, 72/71)$, $\g=1/c$, $h\in\mathcal{F}_c$ and $\vp$ is the inverse function to $h$. As in Section \ref{sectres}, $r>\frac{26-24\g}{16\g-15}$ and $r'$ denotes the conjugate exponent to $r>1$.
\subsection{Transference principle} Here we give a general principle which permits us to transfer our problem to $\Z_N=\Z/N\Z$. Before we do that we need the following.
\begin{lem}\label{lemroth0}
Assume that $A_0\subseteq\mathbf{P}_{h}$ and $\limsup_{n\to\8}\frac{\log n}{\vp(n)}|A_0\cap\mathbf{P}_{h, n}|>0$, then
\begin{align*}
  \limsup_{n\to\8}\frac{|A_0\cap\mathbf{P}_{h, n, 2n}|\log n }{\vp(n)}>0,
\end{align*}
where $\mathbf{P}_{h, x, y}=\mathbf{P}_{h}\cap[x, y]$.
\end{lem}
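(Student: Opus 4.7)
The plan is to argue by contradiction. Since the quantity $|A_0\cap\mathbf{P}_{h,n,2n}|\log n/\vp(n)$ is nonnegative, the negation of the conclusion forces it to tend to $0$. Hence for every $\e>0$ there exists $K_0=K_0(\e)\in\N$ such that
\[
|A_0\cap\mathbf{P}_{h,2^k,2^{k+1}}|\le\e\,\vp(2^k)/\log(2^k)\quad\text{for all }k\ge K_0.
\]
Given large $n$ with $2^K\le n<2^{K+1}$, I would decompose $[1,n]\subseteq [1,2^{K_0}]\cup\bigcup_{k=K_0}^{K}[2^k,2^{k+1}]$ to obtain
\[
|A_0\cap\mathbf{P}_{h,n}|\le C_{K_0}+\e\sum_{k=K_0}^{K}\frac{\vp(2^k)}{k\log 2},
\]
where $C_{K_0}=|A_0\cap\mathbf{P}_{h,2^{K_0}}|$ is an absolute constant.

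The key analytic step is showing $\sum_{k=K_0}^{K}\vp(2^k)/k\lesssim_{\g}\vp(n)/\log n$. I would exploit the Karamata-type representation $\vp(x)=x^{\g}\ell_{\vp}(x)$ from Lemma \ref{formfunlem}, where $\ell_{\vp}(x)=\exp\bigl(\int_{h(x_0)}^{x}\te(t)/t\,dt+D\bigr)$ with $\te(t)\to 0$. For any fixed $\d\in(0,\g)$ and $t$ beyond some threshold one has $|\te(t)|\le\d$, hence $\ell_{\vp}(n)/\ell_{\vp}(2^k)\le(n/2^k)^{\d}$, which upgrades to the geometric bound
\[
\vp(2^k)\le 2^{(k-K)(\g-\d)}\vp(n).
\]
Splitting the sum at $k=K/2$: the top half contributes $\lesssim(\vp(n)/K)\sum_{j\ge 0}2^{-j(\g-\d)}\lesssim_{\g}\vp(n)/\log n$ (using $1/k\le 2/K\lesssim 1/\log n$), while the bottom half is bounded by $K\cdot 2^{-K(\g-\d)/2}\vp(n)$, which is $o(\vp(n)/\log n)$ because $K\simeq\log n$ and $\vp(n)/\log n\to\8$ by \eqref{ratefi}.

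Combining these estimates gives $|A_0\cap\mathbf{P}_{h,n}|\le C_{K_0}+C_{\g}\,\e\,\vp(n)/\log n+o(\vp(n)/\log n)$. Dividing by $\vp(n)/\log n$ and taking $\limsup_n$ yields $\limsup_n|A_0\cap\mathbf{P}_{h,n}|\log n/\vp(n)\le C_{\g}\,\e$. Since $\e$ was arbitrary, the limsup vanishes, contradicting the hypothesis. The only nontrivial point is converting the slowly varying property of $\ell_{\vp}$ into the geometric decay of $\vp(2^k)/\vp(n)$; once that is in hand, the rest is routine dyadic pigeonholing.
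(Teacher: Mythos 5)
Your argument is correct, but it runs along a genuinely different track than the paper's. The paper argues directly: starting from $|A_0\cap\mathbf{P}_{h,n}|>\a_0\vp(n)/\log n$ along a subsequence, it discards the whole initial segment $[1,tn]$ using the counting estimate $|\mathbf{P}_{h,tn}|\le 2\vp(tn)/\log(tn)$ from Theorem \ref{swlthm} together with $\vp(tn)\le 2t^{\g}\vp(n)$, choosing a fixed small $t$ with $t^{\g}<\a_0/16$ so that this contribution is at most half of the main term, and then pigeonholes over the boundedly many (about $\log(1/t)$) dyadic blocks covering $[tn,n]$. You instead argue by contradiction: assuming the dyadic densities tend to zero, you sum the bounds $\e\,\vp(2^k)/(k\log 2)$ over all $\simeq\log n$ dyadic blocks and show, via the representation $\vp(x)=x^{\g}\ell_{\vp}(x)$ of Lemma \ref{formfunlem}, that $\sum_{k\le K}\vp(2^k)/k\lesssim_{\g}\vp(n)/\log n$ because the terms decay geometrically away from the top scale. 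What your route buys: it never uses an upper bound on $|\mathbf{P}_{h,\cdot}|$ (no appeal to the Leitmann/Siegel--Walfisz-type Theorem \ref{swlthm}), only the hypothesis on $A_0$ itself and the slow variation of $\ell_{\vp}$; what the paper's route buys is a shorter argument with an explicit quantitative outcome along a subsequence (density loss $\a_0/(2\log(1/t))$) and no geometric-series bookkeeping. One point you should make explicit when writing this up: the bound $\vp(2^k)\le 2^{(k-K)(\g-\d)}\vp(n)$ is only justified once $2^k$ exceeds the threshold beyond which $|\te(t)|\le\d$, so you should enlarge $K_0$ accordingly (harmless, since it only increases the fixed constant $C_{K_0}$, which is killed on dividing by $\vp(n)/\log n$); with that remark inserted, the proof is complete.
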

\begin{proof}
If $\limsup_{n\to\8}\frac{\log n}{\vp(n)}|A_0\cap\mathbf{P}_{h, n}|>0$ then there exists $\a_0>0$ such that for infinitely many $n\in\N$ we have $|A_0\cap\mathbf{P}_{h, n}|>\a_0\frac{\vp(n)}{\log n}$. Notice that there is $n_1\in\N$ such that for every $n\ge n_1$ we have $|\mathbf{P}_{h, n}|\le\frac{2\vp(n)}{\log n}$ by \eqref{swl1}. Lemma \ref{formfunlem} yields that $\vp(x)=x^{\g}\ell_{\vp}(x)$ and for every $t>0$ $\lim_{x\to\8}\frac{\ell_{\vp}(tx)}{\ell_{\vp}(x)}=1$. Now fix $t>0$ such that $\a_0/16>t^{\g}$ and observe that there exists $n_{2, t}\in\N$ such that for every $n\ge n_{2, t}$ we have $t\ge n^{-1/2}$ and
\begin{align*}
  \vp(tn)=t^{\g}\vp(n)\frac{\ell_{\vp}(tn)}{\ell_{\vp}(n)}=t^{\g}\vp(n)\left(
  \frac{\ell_{\vp}(tn)}{\ell_{\vp}(n)}-1\right)+t^{\g}\vp(n)\le2t^{\g}\vp(n).
\end{align*}
Thus notice that $\frac{2}{\log n}\ge\frac{1}{\log tn}$ which implies that the inequality
\begin{align*}
  |A_0\cap \mathbf{P}_{h, tn, n}|\ge\a_0\frac{\vp(n)}{\log n}-|\mathbf{P}_{h, tn}|\ge
  \a_0\frac{\vp(n)}{\log n}- \frac{2\vp(tn)}{\log(tn)}\ge \a_0\frac{\vp(n)}{\log n}-8t^{\g}\frac{\vp(n)}{\log n}\ge \frac{\a_0}{2}\frac{\vp(n)}{\log n},
\end{align*}
holds for infinitely many $n\ge \max\{n_1/t, n_{2, t}\}$. Now it is easy to see that

\begin{align*}
  \sum_{1\le k\le \log(1/t)}|A_0\cap \mathbf{P}_{h, 2^{k-1}tn, 2^ktn}|\ge \frac{\a_0}{2}\frac{\vp(n)}{\log n},
\end{align*}
hence by the pigeonhole principle there is some $1\le k\le \log(1/t)$ such that
\begin{align*}
  |A_0\cap \mathbf{P}_{h, 2^{k-1}tn, 2^ktn}|\ge \frac{\a_0}{2\log(1/t)}\frac{\vp(2^{k}tn)}{\log(2^{k}tn)}.
\end{align*}
This shows that one can produce infinitely many $n\in\N$ such that
$|A_0\cap\mathbf{P}_{h, n, 2n}|>\a\frac{\vp(2n)}{\log(2n)}$ for some $\a>0$ and the proof of the lemma follows.
\end{proof}

\begin{lem}\label{lemroth2}
Assume that $c\in[1, 72/71)$ and let $\g=1/c$, $h\in\mathcal{F}_c$ and $\vp$ be its inverse. Assume that $A_0\subseteq\mathbf{P}_{h}$ has a positive relative upper density: $\limsup_{n\to\8}\frac{\log n}{\vp(n)}|A_0\cap\mathbf{P}_{h, n, 2n}|>\a_0>0$ and does not contain any arithmetic progression of length three. Then there exists a positive real number $\a$ (which may depend on $\vp$ and $\g$) and there are infinitely many primes $N\in\mathbf{P}$ with the following properties. For every such $N\in\mathbf{P}$ there exists a set $A=A_N\subseteq\{1, 2,\ldots, \lfloor N/2\rfloor\}$ and an integer $W\in[1/8\log\log N, 1/2\log\log N]$ such that
\begin{itemize}
  \item A does not contain any arithmetic progression of length three,
  \item $\lambda_{b, m, N}^h(A)\ge \a$ for some $0\le b\le m-1$ with $(b, m)=1$, where $m=\prod_{p\in\mathbf{P}_W}p$.
\end{itemize}
\end{lem}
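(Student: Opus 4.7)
The plan is to perform a $W$-trick transference in the spirit of Green \cite{G}, using the Siegel--Walfisz result in $\mathbf{P}_h$ (Theorem \ref{swlthm}) and the asymptotics of $\vp$ from Lemma \ref{formfunlem} to track the modified weights.

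I would begin by invoking Lemma \ref{lemroth0} to obtain infinitely many large scales $n\in\N$ for which $|A_0\cap\mathbf{P}_{h,n,2n}|\ge \a_0\vp(n)/\log n$. For each such $n$ I would choose $W\in\N$ with $W\simeq \tfrac{1}{4}\log\log n$ and set $m=\prod_{p\in\mathbf{P}_W}p$; the prime number theorem gives $\log m\sim W$, so $m\le(\log n)^{1/2}$ for large $n$. By Bertrand's postulate I would then select a prime $N\in\mathbf{P}\cap(4n/m,8n/m]$. Since $n/m\to\infty$ with $n$, this gives infinitely many distinct primes $N$, and $\log\log N\sim\log\log n$, so the inclusion $W\in[\tfrac{1}{8}\log\log N,\tfrac{1}{2}\log\log N]$ holds for $n$ large enough.

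Next I would apply pigeonhole in residue classes. Every $p\in A_0\cap[n,2n]$ satisfies $p>W$, hence $(p,m)=1$, so these primes split among the $\phi(m)$ reduced residues modulo $m$. Therefore there exists $b\in\{1,\ldots,m-1\}$ with $(b,m)=1$ and
\[
\bigl|A_0\cap[n,2n]\cap(b+m\Z)\bigr|\gtrsim \frac{\a_0\vp(n)}{\phi(m)\log n}.
\]
I would set
\[
A=\bigl\{a\in\N:\ 1\le a\le\lfloor N/2\rfloor,\ ma+b\in A_0\cap[n,2n]\bigr\}.
\]
For $a\in A$ the bound $ma+b\le 2n$ forces $a\le 2n/m\le N/2$, confirming $A\subseteq\{1,\ldots,\lfloor N/2\rfloor\}$. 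Since the map $a\mapsto ma+b$ is an affine Freiman isomorphism, any nontrivial three--term arithmetic progression in $A$ would transport to a nontrivial one in $A_0$; hence $A$ is $3$--AP-free.

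Finally I would estimate $\lambda_{b,m,N}^h(A)$ directly. Using $\log(ma+b)\ge\log n$ and the comparability $\vp'(ma+b)\simeq\vp'(n)$ guaranteed by \eqref{compfi},
\[
\lambda_{b,m,N}^h(A)=\sum_{a\in A}\frac{\phi(m)\log(ma+b)}{mN\vp'(ma+b)}\gtrsim\frac{\phi(m)\log n}{mN\vp'(n)}\cdot|A|\gtrsim\frac{\a_0\,\vp(n)}{mN\vp'(n)}.
\]
Inserting $N\le 8n/m$ and the identity $n\vp'(n)=\vp(n)(\g+\te(n))$ from Lemma \ref{formfunlem} (so that $\vp(n)/(n\vp'(n))\to c$), the right-hand side is bounded below by an absolute positive constant $\a$ depending only on $\g$ and $\a_0$, independently of $n$, $m$, and $N$. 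The main obstacle is exactly this final step: the density must be uniform in $m$, and this forces the choice $N\asymp n/m$ (rather than $N\asymp n$); the delicate point is then verifying that such $N$ satisfies $W\in[\tfrac{1}{8}\log\log N,\tfrac{1}{2}\log\log N]$, which works precisely because $m\le(\log n)^{1/2}$ and thus $\log\log N=\log\log n+O(1)$.
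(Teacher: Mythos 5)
Your argument is correct and follows essentially the same route as the paper: choose $W\simeq\tfrac14\log\log n$ and $m=\prod_{p\in\mathbf{P}_W}p$, pick a prime $N\asymp n/m$ by Bertrand's postulate, pigeonhole over the reduced residues mod $m$, rescale by the affine map $a\mapsto ma+b$, and bound $\lambda_{b,m,N}^h(A)$ below using $x\vp'(x)\simeq\vp(x)$ and \eqref{compfi}. The only differences (interval $[n,2n]$ instead of $[n/2,n]$, $N\in(4n/m,8n/m]$ instead of $[2n/m,4n/m]$, and doing the pigeonhole on the unweighted count before inserting the weights rather than on the weighted sum) are inessential.
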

\begin{proof}
Take any $n\in\N$ such that $\a_0>\frac{1}{\log n}$ with $|A_0\cap\mathbf{P}_{h, n/2, n}|>\frac{\a_0\vp(n)}{\log n}$. Let $W=\lfloor1/4\log\log n\rfloor$ and $m=\prod_{p\in\mathbf{P}_W}p$. Thus we have
 $m\lesssim (1/4\log\log n)^{\frac{1/4\log\log n}{\log(1/4\log\log n)}}\le (\log n)^{1/4}$. Moreover, choose
any $N\in[2n/m, 4n/m]\cap\mathbf{P}$ which is possible due to Bertrand's postulate. Now we see that
$W\in[1/8\log\log N, 1/2\log\log N]$ and
\begin{align*}
 \sum_{\genfrac{}{}{0pt}{}{b=0}{(b, m)=1}}^{m-1}\sum_{k=n/2}^{n}\mathbf{1}_{A_0\cap P_{b, m}}(k)&=|A_0\cap
 \mathbf{P}_{h, n/2, n}|-|A_0\cap[1, m-1]|\\
 &\ge \a_0\frac{\vp(n)}{\log n}-m\ge \frac{\a_0}{2}\frac{\vp(n)}{\log n},
\end{align*}
where $P_{b, m}=\{j\in\N: j\equiv b(\mathrm{mod}m)\}$. Moreover, $x\vp'(x)\simeq\vp(x)$ and $\vp(2x)\simeq\vp(x)$ by Lemma \ref{formfunlem}. Thus, there exists a finite constant $C_{\vp}>0$ such that
\begin{align*}
 \sum_{\genfrac{}{}{0pt}{}{b=0}{(b, m)=1}}^{m-1}\sum_{k=n/2}^{n}\mathbf{1}_{A_0\cap P_{b, m}}(k)
 \vp'(k)^{-1}\log k\ge C_{\vp}\a_0 n.
\end{align*}
This in turn yields
\begin{align}\label{lemrothf2}
 \sum_{\genfrac{}{}{0pt}{}{k=n/2}{k\equiv b(\mathrm{mod}m)}}^{n}\mathbf{1}_{A_0\cap P_{b, m}}(k)
 \vp'(k)^{-1}\log k\ge \frac{C_{\vp}\a_0 n}{\phi(m)},
\end{align}
for some $0\le b\le m-1$, with $(b, m)=1$. Let us define $A=\frac{1}{m}\big(A_0\cap\{\lfloor n/2\rfloor+1,\ldots, n\}-b\big)$ and observe that $A\subseteq\{1, 2,\ldots, \lfloor N/2\rfloor\}$ and does not contain any three--term arithmetic progression when considered as a subset of $\Z_N=\Z/N\Z$. Moreover, \eqref{lemrothf2} implies
\begin{align*}
\sum_{\genfrac{}{}{0pt}{}{k=0}{mk+b\in\mathbf{P}_h}}^N\mathbf{1}_{A}(k)\frac{\phi(m)\log(mk+b)}{\vp'(mk+b)}
\ge C_{\vp}\a_0 n,
\end{align*}
therefore $\la_{b, m, N}^h(A)\ge C_{\vp}\a_0 n/(mN)\ge C_{\vp}\a_0/4$. It suffices to take $\a=C_{\vp}\a_0/4>0$ and the lemma follows.
\end{proof}

\subsection{Fourier analysis on $\Z_N$ and trilinear forms} We have reduced the matters to the set of integers and we are going to show that $A$ considered as a subset of
$\Z_N=\Z/N\Z$ contains a non--trivial three--term arithmetic progression. Fourier analysis on $\Z_N$ will be invaluable here. If $f:\Z_N\to \C$ is a function, then $\mathcal{F}_{\Z_N}[f]$ denotes its Fourier transform on $\Z_N$,
\begin{align*}
  \mathcal{F}_{\Z_N}[f](\xi)=\sum_{x\in\Z_N}f(x)e^{\frac{-2\pi i \xi x}{N}}, \ \ \mbox{for any $\xi\in\Z_N$}.
\end{align*}
Since $\Z_N$ embeds naturally into $\Z$ thus it makes sense to consider $f:\Z_N\to \C$ as a function on $\Z$ and then $\mathcal{F}_{\Z_N}[f](\xi)=\mathcal{F}_{\Z}(\xi/N)$. By $\mathcal{F}_{\Z_N}^{-1}[f]$ we will denote the inverse Fourier transform of $f$ on $\Z_N$,
 \begin{align*}
  \mathcal{F}_{\Z_N}^{-1}[f](x)=\sum_{\xi\in\Z_N}f(\xi)e^{\frac{2\pi i \xi x}{N}}, \ \ \mbox{for any $x\in\Z_N$}.
\end{align*}
 It is not difficult to see that for every function $f:\Z_N\to \C$ we have the following identity
\begin{align*}
  \mathcal{F}_{\Z_N}^{-1}\big[\mathcal{F}_{\Z_N}[f]\big](x)=N\cdot f(x), \ \ \mbox{for any $x\in\Z_N$},
\end{align*}
which is called the Fourier inversion formula. The convolution of two functions $f, g:\Z_N\to\C$ is
$f*g(x)=\sum_{y\in\Z_N}f(x-y)g(y)$ for $x\in\Z_N$. Products and convolutions are related by $$\mathcal{F}_{\Z_N}[f*g](\xi)=\mathcal{F}_{\Z_N}[f](\xi)\cdot\mathcal{F}_{\Z_N}[g](\xi), \ \ \mbox{for any $\xi\in\Z_N$}.$$
Let us introduce the trilinear form
\begin{align*}
  \Lambda_3(f, g, h)=\sum_{x, d\in\Z_N}f(x)g(x+d)h(x+2d),
\end{align*}
for any $f, g, h:\Z_N\mapsto \C$. Roughly speaking, one can think that the quantity $\Lambda(\mathbf{1}_A, \mathbf{1}_A, \mathbf{1}_A)$ measures the portion of arithmetic progressions $(x, x+d, x+2d)$ in $\Z_N$ which are contained in $A$. It is easy to see that if $N$ is odd (this is always our case) then we have the identity
\begin{align}\label{fid}
  \Lambda_3(f, g, h)=N^{-1}\sum_{\xi\in\Z_N}\mathcal{F}_{\Z_N}[f](\xi)\mathcal{F}_{\Z_N}[g](-2\xi)\mathcal{F}_{\Z_N}[h](\xi).
\end{align}
 Indeed,  by the Fourier inversion formula we have
 \begin{align*}
   \Lambda_3(f, g, h)=N^{-3}\sum_{\xi_1, \xi_2, \xi_3\in\Z_N}\mathcal{F}_{\Z_N}[f](\xi_1)\mathcal{F}_{\Z_N}[g](\xi_2)
   \mathcal{F}_{\Z_N}[h](\xi_3)\Lambda_3\Big(e^{\frac{2\pi i \xi_1\cdot}{N}}, e^{\frac{2\pi i \xi_2\cdot}{N}}, e^{\frac{2\pi i \xi_3\cdot}{N}}\Big),
 \end{align*}
 and this proves \eqref{fid}, since
 \begin{align*}
   \Lambda_3\Big(e^{\frac{2\pi i \xi_1\cdot}{N}}, e^{\frac{2\pi i \xi_2\cdot}{N}}, e^{\frac{2\pi i \xi_3\cdot}{N}}\Big)=\sum_{x, d\in\Z_N}e^{\frac{2\pi i \xi_1x}{N}}e^{\frac{2\pi i \xi_2(x+d)}{N}}
   e^{\frac{2\pi i \xi_3(x+2d)}{N}}=N^2\mathbf{1}_{\{\xi_2=-2\xi_1, \xi_3=\xi_1\}}(\xi_1).
 \end{align*}

\begin{lem}\label{lemroth3}
Let $N\in\mathbf{P}$ and $W\in[1/8\log\log N, 1/2\log\log N]$ be the integers as in Lemma \ref{lemroth2}. Then for sufficiently large $N$, we have
\begin{align}\label{lemrothf3}
  \sup_{\xi\in\Z_N\setminus\{0\}}|\mathcal{F}_{\Z_N}[\la_{b, m, N}^h](\xi)|\le 4\log\log W/W.
\end{align}
\end{lem}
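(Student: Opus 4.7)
The strategy is to transfer the problem to the classical primes by comparing $\la_{b,m,N}^h$ with $\la_{b,m,N}$, for which Green \cite{G} has already established a Fourier uniformity bound of the type sought in \eqref{lemrothf3}. We split
\begin{align*}
  \mathcal{F}_{\Z_N}[\la_{b,m,N}^h](\xi) = \mathcal{F}_{\Z_N}[\la_{b,m,N}](\xi) + \mathcal{F}_{\Z_N}[\la_{b,m,N}^h - \la_{b,m,N}](\xi),
\end{align*}
and control the two pieces separately: the first (main) term by Green's arithmetic estimates, the second (error) term by our own Lemma \ref{formlem}.

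For the main term, observe that $m = \prod_{p \in \mathbf{P}_W} p$ together with the coprimality condition $(b,m)=1$ implements Green's ``$W$-trick'': the integer $mn+b$ has no small prime factor $p \le W$, which smooths out the distribution of primes at minor-arc frequencies. Green's Lemma 6.6 in \cite{G}, whose proof combines Vaughan's identity, the Hardy--Littlewood major/minor arc decomposition, and Siegel--Walfisz, then yields that for all $N$ sufficiently large,
\begin{align*}
  \sup_{\xi \in \Z_N \setminus \{0\}} \big|\mathcal{F}_{\Z_N}[\la_{b,m,N}](\xi)\big| \le C_0\frac{\log\log W}{W},
\end{align*}
for some absolute constant $C_0 < 4$ (indeed a much smaller constant suffices; the factor $4$ in \eqref{lemrothf3} is purposely chosen to leave slack for the error term).

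For the error term, unwinding the definitions of $\la_{b,m,N}^h$ and $\la_{b,m,N}$ gives
\begin{align*}
  \mathcal{F}_{\Z_N}[\la_{b,m,N}^h - \la_{b,m,N}](\xi) = \frac{\phi(m)\,e^{2\pi i \xi b/(mN)}}{mN}\bigl(S_h(\eta_{\xi}) - S(\eta_{\xi})\bigr),
\end{align*}
where $\eta_{\xi} := -\xi/(mN)$, and $S_h(\eta) := \sum_{p \in \mathbf{P}_{h,mN+b},\, p\equiv b\,(\mathrm{mod}\,m)} \vp'(p)^{-1}\log p\,\ep{\eta p}$, with $S(\eta)$ the analogous sum over the ordinary primes. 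Since $c \in [1, 16/15)$, we may fix $\chi > 0$ with $16(1-\g)+28\chi<1$; Lemma \ref{formlem} then furnishes $\chi' > 0$ such that $|S_h(\eta) - S(\eta)| = O((mN)^{1-\chi-\chi'})$ uniformly in $\eta \in [0,1]$. Combined with $\phi(m) \le m$, this yields
\begin{align*}
|\mathcal{F}_{\Z_N}[\la_{b,m,N}^h - \la_{b,m,N}](\xi)| = O\bigl((mN)^{-\chi-\chi'}\bigr) = O(N^{-\chi}),
\end{align*}
a power saving in $N$ that is vastly smaller than $\log\log W/W$, the latter decaying only like $(\log\log\log\log N)/\log\log N$.

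Adding the two contributions yields \eqref{lemrothf3} for all sufficiently large $N$. The real obstacle here lies entirely upstream in Lemma \ref{formlem} (to be proved later in the paper), which supplies the crucial power saving; once it is available, the present lemma reduces to combining Green's estimate with the crucial observation that the bound in Lemma \ref{formlem} is uniform in the frequency $\eta$, so it applies verbatim at every discrete frequency $\eta_\xi = -\xi/(mN)$.
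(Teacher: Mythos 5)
Your proposal is correct and follows essentially the same route as the paper: a triangle-inequality comparison of $\la_{b,m,N}^h$ with $\la_{b,m,N}$, Green's bound $2\log\log W/W$ for the classical measure, and Lemma \ref{formlem} (uniform in the frequency, applied at $\pm\xi/(mN)$) to bound the difference by a power saving in $N$ that is negligible against $\log\log W/W$. The only bookkeeping slip is that the prefactor leaves an extra $\phi(m)\le m$ in the error bound, but since $m\lesssim(\log N)^{1/4}$ this is absorbed and the conclusion $O(N^{-\chi})$ stands.
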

\begin{proof}
The proof of \eqref{lemrothf3} will be a consequence of Green's inequality (see \cite{G} Lemma 6.2)
\begin{align*}
  \sup_{\xi\in\Z_N\setminus\{0\}}|\mathcal{F}_{\Z_N}[\la_{b, m, N}](\xi)|\le 2\log\log W/W,
\end{align*}
and the identity \eqref{form}
\begin{align*}
  \sum_{\genfrac{}{}{0pt}{}{p\in\mathbf{P}_{h, N}}{p\equiv b(\mathrm{mod}m)}}\vp'(p)^{-1}\log p e^{2\pi i \xi p}=\sum_{\genfrac{}{}{0pt}{}{p\in\mathbf{P}_{N}}{p\equiv b(\mathrm{mod}m)}}\log p e^{2\pi i \xi p}+O(N^{1-\chi-\chi'}),
\end{align*}
with some $\chi>0$ and $\chi'>0$, which holds uniformly with respect to $\xi\in[0, 1]$. Indeed,
\begin{multline*}
 \sup_{\xi\in\Z_N\setminus\{0\}}|\mathcal{F}_{\Z_N}[\la_{b, m, N}^h](\xi)|\le \sup_{\xi\in\Z_N\setminus\{0\}}|\mathcal{F}_{\Z_N}[\la_{b, m, N}^h](\xi)-\mathcal{F}_{\Z_N}[\la_{b, m, N}](\xi)|+
 2\log\log W/W\\
 =\sup_{\xi\in\Z_N\setminus\{0\}}\bigg|\sum_{\genfrac{}{}{0pt}{}{0\le n\le N}{mn+b\in\mathbf{P}_{h}}}
 \frac{\phi(m)\log(mn+b)}{mN\vp'(mn+b)} e^{\frac{2\pi i \xi n}{N}}-\sum_{\genfrac{}{}{0pt}{}{0\le n\le N}{mn+b\in\mathbf{P}}}
 \frac{\phi(m)\log(mn+b)}{mN} e^{\frac{2\pi i \xi n}{N}}\bigg|+2\log\log W/W\\
 =\sup_{\xi\in\Z_N\setminus\{0\}}\bigg|\sum_{\genfrac{}{}{0pt}{}{0\le n\le N}{mn+b\in\mathbf{P}_{h}}}
 \frac{\phi(m)\log(mn+b)}{mN\vp'(mn+b)} e^{\frac{2\pi i \xi(mn+b)}{mN}}-\sum_{\genfrac{}{}{0pt}{}{0\le n\le N}{mn+b\in\mathbf{P}}}
 \frac{\phi(m)\log(mn+b)}{mN} e^{\frac{2\pi i \xi(mn+b)}{mN}}\bigg|\\
 +2\log\log W/W\lesssim N^{-\chi}+2\log\log W/W\le 4\log\log W/W,
\end{multline*}
since $W\in[1/8\log\log N, 1/2\log\log N]$ and this completes the proof of the lemma.
\end{proof}

Let us define a new measure $a$ on $\Z_N$ by setting
\begin{align*}
  a(D)=\sum_{x\in\Z_N}\mathbf{1}_{A\cap D}(x)\la_{b, m, N}^h(x), \ \ \mbox{for any $D\subseteq\Z_N$}.
\end{align*}
Then $a(\Z_N)\ge\a$. However, we need to construct another measure $a_1$ on $\Z_N$. Before we do that we have to
introduce some portion of necessary definitions. Let
\begin{align*}
  R=\{\xi\in\Z_N: |\mathcal{F}_{\Z_N}[a](\xi)|\ge \d\},
\end{align*}
for some $\d\in(0, 1)$ which will be specified later. Let $\|x\|$ denotes the distance of $x\in\R$ to the nearest integer. Write $R=\{\xi_1, \xi_2,\ldots, \xi_k\}$ with $k=|R|$ and write
\begin{align*}
  B=B(R, \e)=\left\{x\in\Z_N:\ \forall_{1\le i\le k}\left\|\frac{x\xi_i}{N}\right\|\le\e\right\},
\end{align*}
for the Bohr $\e$--neighbourhood of $R$  with $\e\in(0, 1)$ which will be chosen later.
By the pigeonhole principle one can see that $|B|\ge\e^k N$ -- see Lemma 4.20 in \cite{TV}. Set $\b(x)=|B|^{-1}\mathbf{1}_B(x)$ and define $a_1=a*\b*\b$. It is easy to see that $a_1(\Z_N)\ge\a$.

\begin{lem}\label{lemroth4}
Suppose that $\e^k\ge \log\log W/W$, then there is a finite constant $C_{\vp}\ge2$ such that $\|a_1\|_{\ell^{\8}(\Z_N)}\le C_{\vp}/N$.
\end{lem}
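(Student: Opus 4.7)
The plan is to reduce the $\ell^\infty$ estimate on $a_1$ to a Fourier-analytic control of $\la^h_{b,m,N} \ast \b \ast \b$ on $\Z_N$ via the pointwise majorant $a \le \la^h_{b,m,N}$, and then exploit Lemma \ref{lemroth3} combined with the hypothesis on $\e^k$.

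First I would note that $a(x) = \mathbf{1}_A(x) \la^h_{b,m,N}(x)$ satisfies $0 \le a(x) \le \la^h_{b,m,N}(x)$, and since $\b \ast \b$ is a non-negative function on $\Z_N$ (being the auto-convolution of $\b \ge 0$) with total mass $\big(\sum \b\big)^2 = 1$, convolution against it preserves the pointwise order. Hence
\begin{align*}
  0 \le a_1(x) = (a \ast \b \ast \b)(x) \le (\la^h_{b,m,N} \ast \b \ast \b)(x),
\end{align*}
and it suffices to bound the right-hand side by $C_\vp/N$.

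Applying Fourier inversion on $\Z_N$ and separating the zero frequency gives
\begin{align*}
  (\la^h_{b,m,N} \ast \b \ast \b)(x) = \frac{\la^h_{b,m,N}(\Z_N)}{N} + \frac{1}{N}\sum_{\xi \ne 0} \mathcal{F}_{\Z_N}[\la^h_{b,m,N}](\xi) \mathcal{F}_{\Z_N}[\b](\xi)^2 e^{\frac{2\pi i \xi x}{N}}.
\end{align*}
The zero-frequency piece contributes $\la^h_{b,m,N}(\Z_N)/N$; summation by parts (Lemma \ref{sbp}) combined with Theorem \ref{swlthm} and the asymptotic $x\vp'(x) \simeq \vp(x)$ of Lemma \ref{formfunlem} yields $\la^h_{b,m,N}(\Z_N) = O_\vp(1)$ uniformly in $m$, $b$, $N$, so this term is $O_\vp(1/N)$.

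For the non-zero frequencies I would apply Lemma \ref{lemroth3} to uniformly bound $|\mathcal{F}_{\Z_N}[\la^h_{b,m,N}](\xi)| \le 4\log\log W/W$, then use Parseval $\sum_\xi |\mathcal{F}_{\Z_N}[\b](\xi)|^2 = N\|\b\|_{\ell^2(\Z_N)}^2 = N/|B|$ together with the pigeonhole bound $|B| \ge \e^k N$. The hypothesis $\e^k \ge \log\log W/W$ then gives
\begin{align*}
  \frac{1}{N}\biggl|\sum_{\xi \ne 0} \mathcal{F}_{\Z_N}[\la^h_{b,m,N}](\xi) \mathcal{F}_{\Z_N}[\b](\xi)^2 e^{\frac{2\pi i \xi x}{N}}\biggr| \le \frac{4\log\log W}{W |B|} \le \frac{4\log\log W}{W\, \e^k N} \le \frac{4}{N}.
\end{align*}
Summing the two contributions produces $\|a_1\|_{\ell^\infty(\Z_N)} \le C_\vp/N$ with $C_\vp = \la^h_{b,m,N}(\Z_N) + 4$. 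The main technical point I anticipate is verifying $\la^h_{b,m,N}(\Z_N) = O_\vp(1)$ uniformly in the admissible residue $b$ and modulus $m$; once this is in hand, the Fourier split together with Lemma \ref{lemroth3} and the hypothesis on $\e^k$ closes the argument essentially mechanically.
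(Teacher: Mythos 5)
Your argument is correct and is essentially the paper's own proof: majorize $a_1$ pointwise by $\la_{b,m,N}^h*\b*\b$, invert the Fourier transform on $\Z_N$, bound the zero frequency by the $O_{\vp}(1)$ total mass of $\la_{b,m,N}^h$, and control the nonzero frequencies via Lemma \ref{lemroth3}, Parseval for $\b$, and $|B|\ge\e^k N$ together with $\e^k\ge\log\log W/W$. Your explicit verification of $\la_{b,m,N}^h(\Z_N)=O_{\vp}(1)$ is a detail the paper leaves implicit, but it is the same route.
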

\begin{proof}
By the Fourier inversion formula $\mathcal{F}_{\Z_N}^{-1}\big[\mathcal{F}_{\Z_N}[f]\big](x)=Nf(x)$, and Lemma \ref{lemroth3} we have
\begin{align*}
  a_1(x)&=a*\b*\b(x)\le \la_{b, m, N}^h*\b*\b(x)\\
  &=N^{-1}\sum_{\xi\in\Z_N}\mathcal{F}_{\Z_N}[\la_{b, m, N}^h](\xi)\mathcal{F}_{\Z_N}^2[\b](\xi)e^{\frac{2\pi i \xi x}{N}}\\
  &\le N^{-1}\mathcal{F}_{\Z_N}[\la_{b, m, N}^h](0)\mathcal{F}_{\Z_N}^2[\b](0)\\
  &+N^{-1}\sup_{\xi\in\Z_N\setminus\{0\}}|\mathcal{F}_{\Z_N}[\la_{b, m, N}^h](\xi)|
  \sum_{\xi\in\Z_N\setminus\{0\}}|\mathcal{F}_{\Z_N}[\b](\xi)|^2\\
  &\lesssim N^{-1}+|B|^{-1}\sup_{\xi\in\Z_N\setminus\{0\}}|\mathcal{F}_{\Z_N}[\la_{b, m, N}^h](\xi)|\\
  &\lesssim N^{-1}+\frac{\log\log W}{W|B|}\le C_{\vp}/N,
\end{align*}
since $|B|\ge\e^k N$.
\end{proof}
The next lemma will be essential in the sequel. This is a discrete version of our restriction theorem and sometimes is called a discrete majorant property.
\begin{lem}\label{lemroth5}
Suppose that $r>\frac{26-24\g}{16\g-15}$. Then there is a finite constant $C_{r, \g}'>0$ such that
\begin{align*}
  \|\mathcal{F}_{\Z_N}[a]\|_{\ell^r(\Z_N)}^r\le C_{r, \g}'.
\end{align*}
\end{lem}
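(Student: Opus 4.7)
The plan is to deduce Lemma \ref{lemroth5} from the restriction theorem (Theorem \ref{bgthm1}) applied with $f = \mathbf{1}_A$, after recognizing the discrete Fourier coefficients $\mathcal{F}_{\Z_N}[a](\xi)$ as samples of a trigonometric polynomial at the $N$-th roots of unity. Since $A \subseteq \{1, \ldots, \lfloor N/2 \rfloor\} \subseteq \Lambda_{b,m,N}^h$ and $\la_{b,m,N}^h$ is supported in $\{0, 1, \ldots, N\}$, I would lift $a$ to $\Z$ via the natural embedding $\{0, 1, \ldots, N-1\} \hookrightarrow \Z$, obtaining $a = \mathbf{1}_A \cdot \la_{b,m,N}^h$, and conclude that for every $\xi \in \Z_N$,
\begin{align*}
\mathcal{F}_{\Z_N}[a](\xi) = \mathcal{F}_{\Z}[a](-\xi/N) = T_h(\mathbf{1}_A)(-\xi/N).
\end{align*}
Thus the $N$ numbers $\mathcal{F}_{\Z_N}[a](\xi)$ are just samples of $T_h(\mathbf{1}_A)(t) = \sum_{n \in A} \la_{b,m,N}^h(n) e^{2\pi i n t}$ at the $N$-th roots of unity. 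The spectrum of this trigonometric polynomial is contained in $[1, \lfloor N/2 \rfloor]$, so the classical Marcinkiewicz--Zygmund inequality yields
\begin{align*}
\sum_{\xi \in \Z_N} \bigl|T_h(\mathbf{1}_A)(-\xi/N)\bigr|^r \lesssim_r N \, \|T_h(\mathbf{1}_A)\|_{L^r(\T)}^r.
\end{align*}

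Next I would apply Theorem \ref{bgthm1} with $f = \mathbf{1}_A$; noting that $\|\mathbf{1}_A\|_{L^2(\Lambda_{b,m,N}^h, \la_{b,m,N}^h)}^2 = \la_{b,m,N}^h(A)$, this gives
\begin{align*}
\|T_h(\mathbf{1}_A)\|_{L^r(\T)}^r \le C_{r, \g}^r \, N^{-1} \, \la_{b,m,N}^h(A)^{r/2}.
\end{align*}
Chaining the two displays produces
\begin{align*}
\|\mathcal{F}_{\Z_N}[a]\|_{\ell^r(\Z_N)}^r \lesssim_{r, \g} \la_{b,m,N}^h(A)^{r/2} \le \la_{b,m,N}^h(\Lambda_{b,m,N}^h)^{r/2},
\end{align*}
reducing the lemma to verifying that the total mass $\la_{b,m,N}^h(\Lambda_{b,m,N}^h)$ is bounded independently of $N$.

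This last bound is a routine consequence of Theorem \ref{swlthm} combined with Abel summation against the weight $1/\vp'$ and the relation $\vp'(t) \asymp \vp(t)/t$ furnished by Lemma \ref{formfunlem}; indeed, the prefactor $\phi(m)/(mN)$ in the definition of $\la_{b,m,N}^h$ is tuned precisely so that this total mass is of order one, cancelling the main term from Siegel--Walfisz. I do not expect any substantive obstacle in the proof: the whole argument is essentially a repackaging of Theorem \ref{bgthm1} on the discrete side, and the Marcinkiewicz--Zygmund step is the only mildly non-elementary ingredient, easily handled since the spectrum of $T_h(\mathbf{1}_A)$ is narrow enough that the $N$-point sampling is essentially tight.
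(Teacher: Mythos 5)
Your proposal is correct and follows essentially the same route as the paper: identify $\mathcal{F}_{\Z_N}[a](\xi)$ with samples of $\mathcal{F}_{\Z}[\mathbf{1}_A\la_{b,m,N}^h]$ at the points $\xi/N$, apply the Marcinkiewicz--Zygmund inequality to pass to $N\int_{\T}|\mathcal{F}_{\Z}[a]|^r d\xi$, invoke Theorem \ref{bgthm1} with $f=\mathbf{1}_A$, and bound $\|\mathbf{1}_A\|_{\ell^2(\la_{b,m,N}^h)}$ by the $O(1)$ total mass of $\la_{b,m,N}^h$. The only difference is that you spell out the Siegel--Walfisz/summation-by-parts justification of the last bound, which the paper leaves implicit.
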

\begin{proof}
We shall use Theorem \ref{bgthm1} from the previous section. Then the operator $T_hf=\mathcal{F}_{\Z}[f\la_{b, m, N}^h]$ obeys the inequality   $\|T_hf\|_{L^r(\mathbb{T})}\le C_{r, \g}N^{-1/r}\|f\|_{\ell^2(\Lambda_{b, m, N}^h, \la_{b, m, N}^h)}$ for any $r>\frac{26-24\g}{16\g-15}$. This shows that
\begin{align*}
  \|\mathcal{F}_{\Z_N}[a]\|_{\ell^r(\Z_N)}^r&=\sum_{\xi\in\Z_N}|\mathcal{F}_{\Z_N}[a](\xi)|^r
  =\sum_{\xi=0}^{N-1}|\mathcal{F}_{\Z}[a](\xi/N)|^r\lesssim_{r, \g}N\int_{\mathbb{T}}|\mathcal{F}_{\Z}[a](\xi)|^rd\xi\\
  &=N\int_{\mathbb{T}}|\mathcal{F}_{\Z}[\mathbf{1}_A\la_{b, m, N}^h](\xi)|^rd\xi\lesssim_{r, \g}\|\mathbf{1}_A\|^r_{\ell^2(\Lambda_{b, m, N}^h, \la_{b, m, N}^h)}\le C_{r, \g}',
\end{align*}
where the first inequality follows from Marcinkiewicz--Zygmund theorem -- see Lemma 6.5 in \cite{G}.
\end{proof}
\subsection{Estimates for the trilinear form and completing the proof}
   If $A$ has no proper arithmetic progressions of length $3$, then the only progressions $(x, x+d, x+2d)$ which can lie in $A$ are those for which $x\in A$ and $d=0$, hence
 \begin{align}\label{arithm1}
  \Lambda_3(a, a, a)&=\sum_{x, d\in\Z_N}a(x)a(x+d)a(x+2d)=\sum_{x\in\Z_N}a(x)^3\\
 \nonumber &\le\sum_{x\in\Z_N}\la_{b, m, N}^h(x)^3\lesssim \frac{N\log^6 N}{\vp(N)^3}\lesssim\frac{1}{N^{3\g-9\e_1-1}}\lesssim \frac{1}{N^{3/2}},
\end{align}
since $\g>71/72>5/6$ and $x^{\g-\e_1}\lesssim_{\e_1}\vp(x)$ for any $\e_1>0$.
\begin{lem}\label{lemroth6}
For any $r>\frac{26-24\g}{16\g-15}$, there is a finite constant $C_1>0$ such that we have the following upper bound
\begin{align}\label{lemroth60}
  \Lambda_3(a_1, a_1, a_1)\le C_1N^{-3/2}+C_1N^{-1}\big(\e^2\d^{-r}+\d^{2-r/r'}\big).
\end{align}
\end{lem}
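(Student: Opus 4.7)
The plan is to reduce $\Lambda_3(a_1,a_1,a_1)$ to $\Lambda_3(a,a,a)$ via the Fourier identity \eqref{fid} and to split the resulting smoothing error into two pieces: one controlled by the Bohr-set parameter $\e$ and one by the restriction estimate of Lemma~\ref{lemroth5}.

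First I would apply \eqref{fid} with $f=g=h=a_1$ and factor $\mathcal{F}_{\Z_N}[a_1](\xi)=\mathcal{F}_{\Z_N}[a](\xi)\mathcal{F}_{\Z_N}[\b](\xi)^2$. Setting $M(\xi):=\mathcal{F}_{\Z_N}[\b](\xi)^4\mathcal{F}_{\Z_N}[\b](-2\xi)^2$, one has
\[
\Lambda_3(a_1,a_1,a_1)=\Lambda_3(a,a,a)+\frac{1}{N}\sum_{\xi\in\Z_N}\mathcal{F}_{\Z_N}[a](\xi)^2\mathcal{F}_{\Z_N}[a](-2\xi)\bigl(M(\xi)-1\bigr).
\]
The estimate $\Lambda_3(a,a,a)\le\sum_x a(x)^3\lesssim N^{-3/2}$ from \eqref{arithm1} accounts for the $C_1N^{-3/2}$ term, and the error sum is split into $E_1$ (over $\xi\in R$) and $E_2$ (over $\xi\notin R$).

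The delicate step is the quadratic estimate $|M(\xi)-1|\lesssim\e^2$ for $\xi\in R$. For $x\in B$ one writes $\xi x/N=\ell_x+t_x$ with $\ell_x\in\Z$ and $|t_x|\le\e$, so that $\|2\xi x/N\|\le 2\e$ automatically. A second-order Taylor expansion of the exponentials yields
\[
\mathcal{F}_{\Z_N}[\b](\xi)=1-2\pi i\,\bar t+O(\e^2),\qquad\mathcal{F}_{\Z_N}[\b](-2\xi)=1+4\pi i\,\bar t+O(\e^2),
\]
where $\bar t:=|B|^{-1}\sum_{x\in B}t_x$. Raising to the fourth and second powers gives linear coefficients $-8\pi i\bar t$ and $+8\pi i\bar t$ respectively, whose cancellation in the product leaves $M(\xi)=1+O(\e^2)$. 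Combined with $|\mathcal{F}_{\Z_N}[a](\xi)|\le a(\Z_N)\lesssim 1$ and the spectral-size bound $|R|\le C\d^{-r}$ (from Markov's inequality applied to Lemma~\ref{lemroth5}), this produces $|E_1|\lesssim N^{-1}\e^2\d^{-r}$.

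For $\xi\notin R$ one has $|\mathcal{F}_{\Z_N}[a](\xi)|<\d$ and the crude bound $|M(\xi)-1|\le 2$. In the relevant range $r<3$ (outside which \eqref{lemroth60} is trivially weaker than the elementary $\Lambda_3(a_1,a_1,a_1)\le C/N$ afforded by Lemma~\ref{lemroth4}), the exponent $2-r/r'=3-r$ is positive and therefore $|\mathcal{F}_{\Z_N}[a](\xi)|^2\le\d^{2-r/r'}|\mathcal{F}_{\Z_N}[a](\xi)|^{r/r'}$ on the complement of $R$. Hölder's inequality with conjugate exponents $r'$ and $r$, combined with Lemma~\ref{lemroth5}, then gives
\[
|E_2|\lesssim\frac{\d^{2-r/r'}}{N}\sum_{\xi}|\mathcal{F}_{\Z_N}[a](\xi)|^{r/r'}|\mathcal{F}_{\Z_N}[a](-2\xi)|\lesssim\frac{\d^{2-r/r'}}{N}\|\mathcal{F}_{\Z_N}[a]\|_{\ell^r(\Z_N)}^r\lesssim\frac{\d^{2-r/r'}}{N}.
\]
Summing the three contributions completes \eqref{lemroth60}. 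The main obstacle is the quadratic cancellation $M(\xi)-1=O(\e^2)$ at the spectral set: a naive bound of $O(\e)$ would combine with $|R|\lesssim\d^{-r}$ into $\e\d^{-r}$, which is too weak to close the density-increment optimization used in the next subsection.
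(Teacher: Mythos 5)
Your proposal is correct and takes essentially the same route as the paper: the identity \eqref{fid} combined with the diagonal bound \eqref{arithm1}, the split of the error sum over the spectrum $R$ and its complement, the bound $|R|\le C\d^{-r}$ obtained from Lemma \ref{lemroth5}, and H\"older's inequality together with Lemma \ref{lemroth5} off $R$. The only differences are that you supply a short Taylor-expansion proof of the quadratic cancellation $\big|\mathcal{F}_{\Z_N}[\b](\xi)^4\mathcal{F}_{\Z_N}[\b](-2\xi)^2-1\big|\lesssim\e^2$ on $R$, which the paper simply cites from Green's Lemma 6.7, and that you explicitly dispose of the (trivial) range $r\ge3$, which the paper passes over in silence.
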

\begin{proof}
Let us recall that $\big|\mathcal{F}_{\Z_N}[\b](\xi)^4\mathcal{F}_{\Z_N}[\b](-2\xi)^2-1\big|\le 2^{12}\e^2$ for every $\xi\in R$ -- the proof can be found in \cite{G} Lemma 6.7. By \eqref{arithm1} and the identity \eqref{fid} we have
\begin{align*}
  \Lambda_3(a_1, a_1, a_1)&\le  \Lambda_3(a_1, a_1, a_1)- \Lambda_3(a, a, a)+CN^{-3/2}\\
  &=CN^{-3/2}+N^{-1}\sum_{\xi\in\Z_N}\mathcal{F}_{\Z_N}[a](\xi)^2\mathcal{F}_{\Z_N}[a](-2\xi)
  \big(\mathcal{F}_{\Z_N}[\b](\xi)^4\mathcal{F}_{\Z_N}[\b](-2\xi)^2-1\big),
\end{align*}
Firstly observe that, if $\g>71/72$ then $2<\frac{26-24\g}{16\g-15}<3$. Thus for any  $r\in \big(\frac{26-24\g}{16\g-15}, 3\big)$ we have
\begin{align*}
  \bigg|\sum_{\xi\in R}\mathcal{F}_{\Z_N}[a](\xi)^2\mathcal{F}_{\Z_N}[a](-2\xi)
  \big(\mathcal{F}_{\Z_N}[\b](\xi)^4\mathcal{F}_{\Z_N}[\b](-2\xi)^2-1\big)\bigg|\le 2^{12}\e^2|R|\le C\e^2\d^{-r},
\end{align*}
where the last inequality follows from Lemma \ref{lemroth5} with $r\in \big(\frac{26-24\g}{16\g-15}, 3\big)$. Indeed,
\begin{align*}
  \d^{r}|R|\le \sum_{\xi\in R}|\mathcal{F}_{\Z_N}[a](\xi)|^r\le  \sum_{\xi\in\Z_N}|\mathcal{F}_{\Z_N}[a](\xi)|^r\le C_{r, \g}'.
\end{align*}
Secondly, notice that $1<r'<2$ and $1<\frac{r}{r'}=r-1<2$, since $2<r<3$. Thus again by Lemma \ref{lemroth5} with $r\in \big(\frac{26-24\g}{16\g-15}, 3\big)$, we have
\begin{multline*}
  \bigg|\sum_{\xi\not\in R}\mathcal{F}_{\Z_N}[a](\xi)^2\mathcal{F}_{\Z_N}[a](-2\xi)
  \big(1-\mathcal{F}_{\Z_N}[\b](\xi)^4\mathcal{F}_{\Z_N}[\b](-2\xi)^2\big)\bigg|\\
  \le 2\sup_{\xi\not\in R}|\mathcal{F}_{\Z_N}[a](\xi)|^{2-r/r'}\bigg(\sum_{\xi\in\Z_N}
  \big(|\mathcal{F}_{\Z_N}[a](\xi)|^{r/r'}\big)^{r'}\bigg)^{1/r'}
  \bigg(\sum_{\xi\in\Z_N}|\mathcal{F}_{\Z_N}[a](\xi)|^{r}\bigg)^{1/r}\\
  \le 2\d^{2-r/r'}\sum_{\xi\in\Z_N}|\mathcal{F}_{\Z_N}[a](\xi)|^{r}\le C\d^{2-r/r'}.
\end{multline*}
This completes the proof of Lemma \ref{lemroth6}.
\end{proof}
The next lemma will provide a lower bound on $\Lambda_3(a_1, a_1, a_1)$. In the proof we will follow the argument pioneered by Varnavides \cite{Var} to get this bound.
\begin{lem}\label{lemroth7}
There are absolute constants $C_2, C_3>0$ such that
\begin{align}\label{lemroth70}
  \Lambda_3(a_1, a_1, a_1)\ge C_2N^{-1}e^{-C_3\a^{-1}\log^5(1/\a)}.
\end{align}
\end{lem}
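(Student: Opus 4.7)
The plan is to exploit the bound from Lemma \ref{lemroth4}, namely $\|a_1\|_{\ell^{\infty}(\Z_N)} \leq C_{\varphi}/N$, together with the total-mass bound $a_1(\Z_N) = a(\Z_N) \geq \alpha$. The renormalization $g(x) := (N/C_{\varphi})a_1(x)$ then takes values in $[0,1]$ and satisfies $\mathbb{E}_{x \in \Z_N} g(x) \geq \delta$, where $\delta := \alpha/C_{\varphi}$. The trilinear form behaves homogeneously:
\begin{equation*}
\Lambda_{3}(a_1,a_1,a_1) = (C_{\varphi}/N)^{3}\,\Lambda_{3}(g,g,g),
\end{equation*}
so it suffices to prove $\Lambda_{3}(g,g,g) \gtrsim \delta^{3} N^{2}\, e^{-C \delta^{-1}\log^{5}(1/\delta)}$ and substitute back.

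To estimate $\Lambda_{3}(g,g,g)$ from below I would first reduce to a characteristic function. Setting $A := \{x \in \Z_N : g(x) \geq \delta/2\}$, a standard averaging argument gives $|A| \geq \delta N/2$, and since $g(x) \geq (\delta/2)\mathbf{1}_{A}(x)$ pointwise,
\begin{equation*}
\Lambda_{3}(g,g,g) \geq (\delta/2)^{3}\,\Lambda_{3}(\mathbf{1}_{A},\mathbf{1}_{A},\mathbf{1}_{A}).
\end{equation*}
Next I would apply Sanders's quantitative refinement of Roth's theorem \cite{San2} through Varnavides's averaging trick \cite{Var}. Pick $M \simeq \exp(C_{0}\delta^{-1}\log^{5}(1/\delta))$ so that any subset of $\{1,\ldots,M\}$ of density at least $\delta/4$ is forced, by \cite{San2}, to contain a non-trivial three-term progression. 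The average density of $A$ on the family of length-$M$ progressions $\{a,a+r,\ldots,a+(M-1)r\} \subseteq \Z_N$ with $r \neq 0$ equals $|A|/N \geq \delta/2$, hence by a second Markov step a positive fraction $\gtrsim \delta$ of these progressions have $A$-density $\geq \delta/4$ and contribute a three-term progression inside $A$. As each three-term progression lies in $O(M)$ length-$M$ progressions, this yields
\begin{equation*}
\Lambda_{3}(\mathbf{1}_{A},\mathbf{1}_{A},\mathbf{1}_{A}) \gtrsim \delta N^{2}/M \gtrsim \delta N^{2}\, e^{-C_{0}\delta^{-1}\log^{5}(1/\delta)}.
\end{equation*}

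Combining these bounds and substituting $\delta = \alpha/C_{\varphi}$ (noting $\log^{5}(C_{\varphi}/\alpha) \lesssim \log^{5}(1/\alpha)$ for $\alpha$ small, while for $\alpha$ bounded below the conclusion is trivial after adjusting the constants) produces
\begin{equation*}
\Lambda_{3}(a_1,a_1,a_1) \gtrsim N^{-1}\, e^{-C_{3}\alpha^{-1}\log^{5}(1/\alpha)},
\end{equation*}
which is the claimed estimate. The only genuine input is Sanders's theorem, invoked as a black box; no further analytical obstacle arises after Lemma \ref{lemroth4}, since the normalization converts the measure-theoretic statement into a purely combinatorial density problem on $\Z_N$. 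One minor technical point is that $N$ is a large odd prime, so every non-zero step $r \in \Z_N$ produces a length-$M$ progression with distinct terms, which is exactly the setting required by Varnavides's scheme; the trivial $d=0$ contribution to $\Lambda_3$ is non-negative and only helps the lower bound.
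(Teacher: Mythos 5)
Your proposal follows essentially the same route as the paper's proof: Lemma \ref{lemroth4} together with the mass bound $a_1(\Z_N)\ge\alpha$ produces a superlevel set of $a_1$ of density $\gtrsim\alpha$ in $\Z_N$, and Sanders's theorem \cite{San2} is then fed into Varnavides's averaging \cite{Var} over length-$M$ progressions with $M\simeq e^{C\alpha^{-1}\log^5(1/\alpha)}$; your normalization $g=(N/C_{\varphi})a_1$ and the level set $\{g\ge\delta/2\}$ are only cosmetic variants of the paper's set $A'=\{x: a_1(x)\ge \alpha/C_{\varphi}N\}$. One quantitative slip: a fixed non-trivial three-term progression lies in $\Theta(M^2)$, not $O(M)$, of the progressions $\{a,a+r,\ldots,a+(M-1)r\}$ (one chooses the gap $j-i\in\{1,\ldots,M-1\}$ between the first two terms, which determines $r$ because $N$ is prime, and then the offset), so your count should read $\Lambda_3(\mathbf{1}_A,\mathbf{1}_A,\mathbf{1}_A)\gtrsim \delta N^2/M^2$, exactly as in the paper; since $M$ is exponential in $\delta^{-1}\log^5(1/\delta)$ this only changes the constant $C_3$ and the bound \eqref{lemroth70} is unaffected. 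You should also dispose explicitly of the degenerate case $M>N$, where length-$M$ progressions in $\Z_N$ are not genuine (your remark about $\alpha$ bounded below does not cover this); the paper dismisses it in one line, and indeed there the claim is trivial, e.g.\ from $\Lambda_3(a_1,a_1,a_1)\ge\sum_{x}a_1(x)^3\ge \alpha^3N^{-2}\ge \alpha^3 N^{-1}M^{-1}$.
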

\begin{proof}
Recall that Sanders's result on three--term arithmetic progressions in the integers \cite{San2} guarantees that there is a constant $B_1>0$ such that if
\begin{align*}
  M\ge e^{B_1\a^{-1}\log^5(1/\a)},
\end{align*}
then any subset of $\{1, 2,\ldots, M\}$ of density at least $\a/4C_{\vp}$ contains a non--trivial three--term arithmetic progression. Let $A'=\{x\in\Z_N: a_1(x)\ge \a/C_{\vp}N\}$, where $C_{\vp}\ge2$ is the constant from Lemma \ref{lemroth4}. Thus by Lemma \ref{lemroth4} we have
\begin{align*}
  \a\le\sum_{x\in\Z_N}a_1(x)\le\frac{C_{\vp}|A'|}{N}+\frac{\a}{C_{\vp}N}(N-|A'|),
\end{align*}
which implies that $|A'|\ge \a N/2C_{\vp}$. Let $Z$ denote the number of three--term arithmetic progressions in $A'$. It is clear that
\begin{align}\label{lemroth71}
  \sum_{x, d\in\Z_N}a_1(x)a_1(x+d)a_1(x+2d)\ge\a^3Z/C_{\vp}^3N^3.
\end{align}
We will find a lower bound for $Z$. Let $P_{a, d}=\{a, a+d, \ldots,a+(M-1)d\}$ be an arithmetic progression of length $M$ in $\Z_N$, where $a, d\in\Z_N$, $d\not=0$ and $M\le N$. If $A'\cap P_{a, d}\subseteq\Z_N$ has at least $\a M/4C_{\vp}$ elements then Sanders's theorem yields the existence at least one non--trivial arithmetic progression of length three. Fix $d\not=0$ and observe that
\begin{align*}
  \sum_{a\in\Z_N}|A'\cap P_{a, d}|=M|A'|\ge \a MN/2C_{\vp},
\end{align*}
since there are exactly $N$ different arithmetic progressions (with the difference $d\not=0$) of length $M$ in $\Z_N$ and thus each element of $A'$ is contained in exactly $M$ of them. Now we see that
\begin{align*}
  \frac{\a MN}{2C_{\vp}}\le \sum_{a\in\Z_N}|A'\cap P_{a, d}|=\sum_{a\in\Z_N:\ |A'\cap P_{a, d}|\ge \a M/4C_{\vp}}|A'\cap P_{a, d}|
  +\sum_{a\in\Z_N:\ |A'\cap P_{a, d}|< \a M/4C_{\vp}}|A'\cap P_{a, d}|,
\end{align*}
which in turn implies that
\begin{align*}
  \a MN/4C_{\vp}\le \sum_{a\in\Z_N:\ |A'\cap P_{a, d}|\ge \a M/4C_{\vp}}|A'\cap P_{a, d}|\le |\{a\in\Z_N:\ |A'\cap P_{a, d}|\ge \a M/4C_{\vp}\}|M.
\end{align*}
We have just shown that the inequality $|A'\cap P_{a, d}|\ge \a M/4C_{\vp}$ holds for at least $\a N/4C_{\vp}$ values of $a\in\Z_N$. Therefore, there are at least $\a N^2/4C_{\vp}$ arithmetic progressions $P_{a, d}$ for which $|A'\cap P_{a, d}|\ge \a M/4C_{\vp}$, whence, as we said above, Sanders's result allows us to find at least one non--trivial arithmetic progression of length three in $|A'\cap P_{a, d}|$. Each non--trivial arithmetic progression of length three in $\Z_N$ can be contained in at most $M^2$ arithmetic progressions $P_{a, d}$. Hence, when we count the arithmetic progressions of length three in $A'\cap P_{a, d}$ we are counting each such progression at most $M^2$ times. Thus we have shown that
\begin{align}\label{lemroth72}
  Z\ge\frac{\a N^2}{4C_{\vp}M^2}.
\end{align}
Taking $M=\big\lceil e^{B_1\a^{-1}\log^5(1/\a)}\big\rceil$ and combining \eqref{lemroth71} with \eqref{lemroth72}, provided that $M\le N$, we see that
\begin{align*}
  \Lambda_3(a_1, a_1, a_1)\ge\a^3Z/C_{\vp}^3N^3\ge \frac{\a^4}{8C_{\vp}^4Ne^{2B_1\a^{-1}\log^5(1/\a)}}\ge C_2N^{-1}e^{-C_3\a^{-1}\log^5(1/\a)}.
\end{align*}
If $M>N$ the bound \eqref{lemroth70} is trivial since $Z$ always contains trivial arithmetic progression. This completes the proof of the lemma.
\end{proof}
\begin{proof}[Proof of Theorem \ref{Roththm}]
Now we gathered all ingredients necessary to conclude Theorem \ref{Roththm}. Indeed, combining \eqref{lemroth60} and \eqref{lemroth70} we see that for some $C>0$ we have
\begin{align}\label{thmrothe1}
 e^{-C_3\a^{-1}\log^5(1/\a)}\le CN^{-1/2}+C\e^2\d^{-r}+C\d^{2-r/r'},
\end{align}
for any $\g>71/72$ and $r\in \big(\frac{26-24\g}{16\g-15}, 3\big)$. Our task now is to show that there are constants $C_4>0$, and $C_5>0$ such that if we take
\begin{align*}
\d=e^{-C_4\a^{-1}\log^5(1/\a)} \ \ \mbox{and}\ \ \ \e=e^{-C_5\a^{-1}\log^5(1/\a)},
\end{align*}
then \eqref{thmrothe1} is impossible and this will have contradicted to the assumption that $A$ does not contain any arithmetic progression of length three. Rewriting \eqref{thmrothe1} we obtain
\begin{align*}
 e^{-C_3\a^{-1}\log^5(1/\a)}\le CN^{-1/2}+Ce^{-(2C_5-rC_4)\a^{-1}\log^5(1/\a)}+Ce^{-C_4(2-r/r')\a^{-1}\log^5(1/\a)},
\end{align*}
thus
\begin{align*}
 e^{-C_3\a^{-1}\log^5(1/\a)}\big(1-Ce^{-(2C_5-rC_4-C_3)\a^{-1}\log^5(1/\a)}-Ce^{-(C_4(2-r/r')-C_3)\a^{-1}\log^5(1/\a)}\big)\le CN^{-1/2},
\end{align*}
It is enough to take $C_4, C_5>0$ such that
$$Ce^{-(2C_5-rC_4-C_3)\a^{-1}\log^5(1/\a)}\le1/4,\ \ \ \mbox{and}\ \ \ \ Ce^{-(C_4(2-r/r')-C_3)\a^{-1}\log^5(1/\a)}\le 1/4,$$
then
\begin{align}\label{thmrothe2}
 e^{-C_3\a^{-1}\log^5(1/\a)}\le 2CN^{-1/2}.
\end{align}
We know  that $\e^k\ge\log\log W/W$ and  $k\le C\d^{-r}$ by Lemma \ref{lemroth6}, thus $\d>0$ and $\e>0$ must satisfy $\e^{C\d^{-r}}\ge\log\log W/W$. In other words
\begin{align}\label{thmrothe3}
  Ce^{rC_4\a^{-1}\log^5(1/\a)}\cdot C_5\a^{-1}\log^5(1/\a)\lesssim \log\left(\frac{\log\log N}{\log\log\log\log N}\right).
\end{align}
Taking
\begin{align*}
  \a\ge C'\frac{(\log\log\log\log\log N)^6}{\log\log\log\log N},
\end{align*}
for some $C'>0$, we easily see that \eqref{thmrothe3} is satisfied for sufficiently large $N$, but we have a contradiction with \eqref{thmrothe2}. This completes the proof of Theorem \ref{Roththm}.
\end{proof}
\section{Estimates for some exponential sums}\label{sectexp}
The task now is to show the estimate \eqref{finbound} which will be the main ingredient in the proof
of Lemma \ref{formlem} and allows us to gain a suitable error term in \eqref{form}. Our proof will be based on  Vaughan's trick (see Lemma \ref{Vaug}) and on  Vinogradov's ideas from the ternary Goldbach problem. See for instance \cite{Nat} or \cite{GK}. However, we only touch on a few aspects of Vinogradov's theory and instead of Weyl's type estimates we will use Van der Corput's inequality (see Lemma \ref{vdc}). In order to get a better understanding of the estimate \eqref{finbound} we refer the reader to Section \ref{sectformlem}, where its need naturally arises. Throughout the last two sections we assume that $c\in[1, 16/15)$, $\g=1/c$, $h\in\mathcal{F}_c$ and $\vp$ is the inverse function to $h$.
\begin{lem}\label{finboundlem}
Assume that $P\ge1$, $\xi\in[0, 1]$ and $M=P^{1+\chi+\e}\vp(P)^{-1}$ with  $\chi>0$  such that $16(1-\g)+28\chi<1$ and $0<\e<\chi/100$. Let $q\in\N$ and $0\le a\le q-1$ such that $(a, q)=1$ and define $\Lambda_{a, q}(k)=\Lambda(k)\mathbf{1}_{P_{a, q}}(k)$ where $P_{a, q}=\{j\in\N: j\equiv a(\mathrm{mod}q)\}$.  Then for every $0< |m|\le M$ we have
\begin{align}\label{finbound}
  \bigg|\sum_{P<k\le P_1\le 2P}\Lambda_{a, q}(k)e^{2\pi i(\xi k+m\vp(k))}\bigg|&\lesssim |m|^{1/2}\log^2 P_1\ \s(P_1)^{-1/2}\vp(P_1)^{1/2} P_1^{3/8}\\
  \nonumber  &+ |m|^{1/6}\log^{6}P_1\ \s(P_1)^{-1/6}\vp(P_1)^{-1/6}P_1^{13/12}.
\end{align}
If $c>1$ then the function $\s$ is constantly equal to $1$.
\end{lem}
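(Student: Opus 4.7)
The plan is to apply Vaughan's identity (Lemma \ref{Vaug}) with the symmetric choice $v = w = V$ for a parameter $V$ to be tuned, writing $\Lambda(k)\mathbf{1}_{k \equiv a\,(\mathrm{mod}\,q)}$ as the sum of two ``Type I'' bilinear expressions (with short divisor $l \le V$ or $l \le V^2$) and one ``Type II'' expression (with both divisors exceeding $V$). Substituting into the exponential sum and splitting dyadically in the $l$-variable, everything reduces to estimating
\[
\sum_{l}\alpha_l \sum_{k \in I_l,\, kl \equiv a\,(\mathrm{mod}\,q)} \beta_k\, e^{2\pi i(\xi k l + m\varphi(kl))},
\]
with coefficients $\alpha_l, \beta_k$ bounded by $\log P$, divisor functions, or von Mangoldt values arising from $\Pi_V$ and $\Xi_V$.

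For the Type I sums I would fix $l$ in its dyadic range and, writing the congruence $kl \equiv a \,(\mathrm{mod}\,q)$ as a congruence $k \equiv a'\,(\mathrm{mod}\,q/(l,q))$, estimate the inner sum via Lemma \ref{vdc}. The phase $F(k) = \xi k l + m\varphi(kl)$ has $F''(k) = m l^2 \varphi''(kl)$, and combining Lemma \ref{funlemfi} with Lemma \ref{formfunlem} yields $\varphi''(x) \simeq \sigma(x)\varphi(x)/x^2$ uniformly for $x \simeq P$ (recalling $\sigma \equiv 1$ when $c > 1$). Hence the natural Van der Corput parameter $\eta$ has size $\simeq m l^2 \sigma(P) \varphi(P)/P^2$ on the range $k \simeq P/l$, giving an inner bound of order $m^{1/2}\sigma(P)^{1/2}\varphi(P)^{1/2} + P\, l^{-1} m^{-1/2}\sigma(P)^{-1/2}\varphi(P)^{-1/2}$. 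Summing over $l \le V^2$ produces a contribution matching the first term in \eqref{finbound} for an appropriate $V$.

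The Type II piece $\sum_{V < l,\,V < k,\, kl \simeq P} \Lambda(k)\Xi_V(l)\, e^{2\pi i(\xi kl + m\varphi(kl))}$ is the main obstacle and calls for the Weyl--Van der Corput inequality (Lemma \ref{vdc1}) applied to the $l$-sum with a shift parameter $R$. After Cauchy--Schwarz and shifting, the resulting phase becomes $m\bigl(\varphi(k(l+r)) - \varphi(kl)\bigr)$; by the mean-value theorem together with the third-derivative bound $\varphi'''(x) \simeq \sigma(x)\varphi(x)/x^3$ supplied by Lemma \ref{funlemfi}, its second derivative in $k$ has order $|r|\, m\, \sigma(P)\varphi(P)/(P^2 l)$. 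A second application of Lemma \ref{vdc} to the inner $k$-sum, summation over $|r| \le R$ (with the $r=0$ diagonal handled separately), and optimization of $R$ against the dyadic scale of $l$ yields a Type II bound matching the second term of \eqref{finbound}.

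The final step is to equate the Type I and Type II contributions by tuning $V$, which pins down the exponents $3/8$ and $13/12$, and to absorb the residue restriction $k \equiv a\,(\mathrm{mod}\,q)$ -- with $q$ small relative to every parameter in sight -- into constants without altering the $P$-exponent. The logarithmic factors $\log^2 P_1$ and $\log^6 P_1$ arise from the standard divisor-function losses in Vaughan's decomposition, summation by parts on the $\log k$ weight, and the repeated Cauchy--Schwarz in the Type II step. The chief technical difficulty lies in controlling the second derivative of $\varphi(k(l+r)) - \varphi(kl)$ \emph{uniformly in the shift} $r$ via Lemma \ref{funlemfi}, and in tracking the factor $\sigma(P)$ carefully when $c = 1$ so that the bound loses only an arbitrarily small power of $P$ through $\sigma(P)^{-1}$.
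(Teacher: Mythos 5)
Your overall strategy coincides with the paper's (Vaughan's identity with $v=w$, Van der Corput for the Type I pieces, Cauchy--Schwarz plus Weyl--Van der Corput plus Van der Corput for the Type II piece), but two steps as you describe them do not deliver the stated bound. First, the decomposition: you propose to handle the $\Pi_V$-range $l\le V^2$ as Type I, summing the inner Van der Corput bound $(m\sigma\varphi)^{1/2}+P\,l^{-1}(m\sigma\varphi)^{-1/2}$ trivially over $l$. That sum contributes about $V^2(m\sigma(P)\varphi(P))^{1/2}$, which is $\lesssim m^{1/2}\sigma(P)^{-1/2}\varphi(P)^{1/2}P^{3/8}$ only if $V\lesssim P^{3/16}$; with the choice that the argument actually needs, $V=\varphi(P_1)P_1^{-5/8}\simeq P^{\gamma-5/8}$, the range $V<l\le V^2$ gives $\approx m^{1/2}\sigma^{1/2}\varphi^{5/2}P^{-5/4}$, which exceeds both terms of \eqref{finbound} as soon as $\gamma>13/16$. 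This is exactly why the paper splits the $\Pi_v$-sum and treats its long part $S_{22}$ ($v<l\le v^2$) bilinearly, like $S_3$. Relatedly, the exponent $13/12$ is not ``pinned down by tuning $V$'': it comes from the bilinear estimate's gain $\min\{K,L\}^{1/6}$ evaluated at the balanced case $K\simeq L\simeq P^{1/2}$, independently of $V$, while $3/8$ comes from the specific choice $V=\varphi(P_1)P_1^{-5/8}$ in the genuinely short range $l\le V$ only.

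Second, in the Type II step your differenced phase is handled incorrectly in two respects. With the shift $r$ in $l$, one has $\partial_k^2\,m[\varphi(k(l+r))-\varphi(kl)]=m[(l+r)^2\varphi''(k(l+r))-l^2\varphi''(kl)]\simeq m|r|\sigma(P)\varphi(P)\,l/P^2$, not $m|r|\sigma(P)\varphi(P)/(P^2l)$ (you are off by $l^2$); and to get the lower bound needed for Lemma \ref{vdc} you cannot use only the magnitudes $|\varphi''|\simeq\sigma\varphi/x^2$, $|\varphi'''|\simeq\sigma\varphi/x^3$, because $2u\varphi''(ku)$ and $ku^2\varphi'''(ku)$ have opposite signs; you need the identity $x\varphi'''(x)=\varphi''(x)(\beta_3+\theta_3(x))$ from Lemma \ref{funlemfi}, which gives the factor $2+\beta_3+\theta_3=\gamma+\theta_3$ as in the paper's Lemma \ref{billem}. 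More importantly, the $1/6$-power gain produced by optimizing $R$ is the size of the variable in which you difference: differencing always in $l$ yields $L^{1/6}$, and when $L=\max\{K,L\}\simeq P^{3/4}$ this gives a Type II bound of order $m^{1/6}(\sigma\varphi)^{-1/6}P^{9/8}$, which is larger than the claimed $m^{1/6}(\sigma\varphi)^{-1/6}P^{13/12}$ by $P^{1/24}$. You must difference in the \emph{shorter} variable (the paper's explicit ``arrange $K\le L$''), which is permissible since both coefficient sequences ($\Lambda$ and $\Xi_V$, resp.\ $\Pi_V$) obey the same $\ell^2$ bounds; your sketch never addresses this. Finally, the congruence should be removed at the outset via $\mathbf{1}_{P_{a,q}}(k)=q^{-1}\sum_{s=0}^{q-1}e^{2\pi i s(k-a)/q}$, giving a bound uniform in $q$ and uniform in $\alpha=\xi+s/q$; your assumption that $q$ is ``small relative to every parameter'' is not in the hypotheses, and note that $kl\equiv a\ (\mathrm{mod}\ q)$ with $(a,q)=1$ forces $(l,q)=1$ (otherwise the inner sum is empty), not a congruence modulo $q/(l,q)$.
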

The proof of Lemma \ref{finboundlem} falls naturally into the scheme based on Vaughan's identity from Lemma \ref{Vaug}, which permits us to split the sum from \eqref{finbound} into four sums simpler to deal with. We are going to describe this procedure in the proof of Lemma \ref{finboundlem}.
\begin{proof}
It is easy to see that
\begin{align*}
  \mathbf{1}_{P_{a, q}}(k)=\frac{1}{q}\sum_{s=0}^{q-1}e^{\frac{2\pi i s(k-a)}{q}}=\left\{ \begin{array} {ll}
1, & \mbox{if $k\equiv a(\mathrm{mod}q)$,}\\
0, & \mbox{otherwise.}
\end{array}
\right.
\end{align*}
This implies that
\begin{align*}
 \sum_{P<k\le P_1\le 2P}\Lambda_{a, q}(k)e^{2\pi i(\xi k+m\vp(k))}=
 \frac{1}{q}\sum_{s=0}^{q-1}e^{-2\pi i sa/q}\sum_{P<k\le P_1\le 2P}\Lambda(k)e^{2\pi i((\xi+s/q) k+m\vp(k))}.
\end{align*}
In view of this identity it suffices to establish the bounds for $0<m\le M$
\begin{align}\label{finbound1}
  \bigg|\sum_{P<k\le P_1\le 2P}\Lambda(k)e^{2\pi i(\a k+m\vp(k))}\bigg|&\lesssim m^{1/2}\log^2 P_1\ \s(P_1)^{-1/2}\vp(P_1)^{1/2} P_1^{3/8}\\
  \nonumber  &+ m^{1/6}\log^{6}P_1\ \s(P_1)^{-1/6}\vp(P_1)^{-1/6}P_1^{13/12},
\end{align}
uniformly with respect to $\a=\xi+s/q$ where $1\le s<q$ and $\xi\in[0, 1]$. According to Lemma \ref{Vaug} with $v=w=\vp(P_1)P_1^{-5/8}$,  we immediately see that
\begin{align}\label{vsplit}
 \sum_{P<n\le P_1\le 2P}\Lambda(n)&e^{2\pi i(\a n+m\vp(n))}=
 \sum_{l\le v}\sum_{P/l<k\le P_1/l}\log k\ \mu(l)e^{2\pi i(\a kl+m\vp(kl))}\\
\nonumber &-\bigg(\sum_{l\le v}+\sum_{v<l\le v^2}\bigg)\sum_{P/l<k\le P_1/l}\Pi_{v}(l)e^{2\pi i(\a kl+m\vp(kl))}\\
 \nonumber &+\sum_{v<l\le P_1/v}\sum_{\genfrac{}{}{0pt}{}{P/l< k\le P_1/l}{k>v}}\Lambda(k)\Xi_v(l)e^{2\pi i(\a kl+m\vp(kl))}=S_1-S_{21}-S_{22}+S_3,
\end{align}
with $\Pi_{v}(l)=\Pi_{v, v}(l)$ and $\Xi_v(l)$ which have been defined in \eqref{pixi}.

We are reduced to estimate the sums $S_1, S_{21}, S_{22}$ and $S_3$. The proof of \eqref{finbound} is completed by showing that
\begin{align}\label{ineq1}
  |S_1|,\ |S_{21}|\lesssim m^{1/2}\log^2 P_1\ \s(P_1)^{-1/2}\vp(P_1)^{1/2} P_1^{3/8},
\end{align}
and
\begin{align}\label{ineq2}
 |S_{22}|,\ |S_{3}|\lesssim m^{1/6}\log^{6}P_1\ \s(P_1)^{-1/6}\vp(P_1)^{-1/6}P_1^{13/12}.
\end{align}
The proofs of \eqref{ineq1} and \eqref{ineq2} have been carried over into the next two subsections.
\end{proof}
Before we derive the inequalities \eqref{ineq1} and \eqref{ineq2} we need the following.
\begin{lem}\label{vdclem2}
For every $m\in\Z\setminus\{0\}$, $l\in\N$, $j\ge0$ and $X\ge 1$ we have
\begin{align}\label{vdcest3}
  \bigg|\sum_{1\le k\le X}\ e^{2\pi i(\a jkl+m\vp(kl))}\bigg|\lesssim |m|^{1/2}\log(lX)\ lX\big(\s(lX)\vp(lX)\big)^{-1/2}.
\end{align}
If $c>1$ then $\s$ is constantly equal to $1$.
\end{lem}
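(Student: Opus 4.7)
The plan is to apply Van der Corput's inequality (Lemma~\ref{vdc}) after splitting the range $[1,X]$ dyadically. Write $F(k)=\a jkl+m\vp(kl)$ for the phase; the linear term contributes nothing to $F''$, so
\[
F''(k)=ml^2\vp''(kl).
\]
Using Lemma~\ref{funlemfi}, $|\vp''(x)|\simeq \vp'(x)\s(x)/x$ (where $\s\equiv 1$ in the case $c>1$, absorbing the constant $|\g-1+\te_2(x)|$), and combining this with $x\vp'(x)\simeq\vp(x)$ from Lemma~\ref{formfunlem} gives uniformly
\[
|F''(k)|\simeq\frac{m\,\vp(kl)\,\s(kl)}{k^2}.
\]

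Next, I would decompose $(1,X]$ into dyadic intervals $(K,2K]$ with $K=1,2,4,\ldots$. By \eqref{compfi} together with $\s(2x)\simeq\s(x)$ from Lemma~\ref{funlemfi}, the quantity $|F''(k)|$ is comparable on each such interval to
\[
\eta_K:=\frac{m\,\vp(Kl)\,\s(Kl)}{K^2},
\]
with absolute comparison constants, so Lemma~\ref{vdc} applies with $r=O(1)$ and $|I|=K$:
\[
\Bigl|\sum_{K<k\le 2K}e^{2\pi i F(k)}\Bigr|\lesssim K\eta_K^{1/2}+\eta_K^{-1/2}=\bigl(m\,\vp(Kl)\,\s(Kl)\bigr)^{1/2}+\frac{K}{\bigl(m\,\vp(Kl)\,\s(Kl)\bigr)^{1/2}}.
\]

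Both summands grow geometrically in $K$: the first because $\vp(x)$ grows essentially like $x^\g$, and the second because $K/\vp(Kl)^{1/2}\sim K^{1-\g/2}$ and $\g<2$; the factor $\s$ is slowly varying with $\s^{-1}\lesssim_\e x^\e$, which is absorbed into $\log(lX)$. Summing over the $O(\log(lX))$ dyadic scales therefore gives a bound of order
\[
\log(lX)\left(\bigl(m\,\vp(Xl)\,\s(Xl)\bigr)^{1/2}+\frac{X}{\bigl(m\,\vp(Xl)\,\s(Xl)\bigr)^{1/2}}\right).
\]
To finish, each of these two top-scale terms must be shown to be $\lesssim|m|^{1/2}lX\,(\s(lX)\vp(lX))^{-1/2}$: the first reduces to $\vp(lX)\s(lX)\le lX$, which follows from $\vp(x)\le x$ (see \eqref{ratefi}) and $\s\lesssim 1$, while the second reduces to $ml\ge 1$, which holds since $|m|\ge 1$ and $l\in\N$.

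The main technical obstacle is the careful bookkeeping of slow variation in the case $c=1$, where $\s(x)=\vt(\vp(x))$ tends to $0$ and must be tracked through both the computation of $|F''|$ and the dyadic summation; here the properties $\s(2x)\simeq\s(x)$ and $\s(x)^{-1}\lesssim_\e x^\e$ from Lemma~\ref{funlemfi} are essential to keep the comparison constants absolute and to absorb subpolynomial losses into the $\log(lX)$ factor. A secondary small point is the trivial handling of the first few dyadic pieces (where $K=O(1)$), for which the trivial bound $O(K)$ is already subsumed by the top-scale estimate.
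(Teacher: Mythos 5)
Your proposal is correct and follows essentially the same route as the paper: a dyadic decomposition, Van der Corput's inequality (Lemma~\ref{vdc}) with $|F''(k)|\simeq m\,\s(kl)\vp(kl)/k^2$ obtained from Lemma~\ref{funlemfi}, and the final reductions $\s(x)\vp(x)\lesssim x$ and $ml\ge 1$. The paper handles the summation over scales slightly more cleanly by first converting the bound at each scale $Y$ to the increasing function $m^{1/2}\,lY\big(\s(lY)\vp(lY)\big)^{-1/2}$ and then taking a supremum over the $O(\log(lX))$ scales, which is a tidier justification of your ``geometric growth'' step (in particular, nothing needs to be absorbed into the logarithm).
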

This lemma is essential for us and will be applied repeatedly in the sequel with $j=0$ or $1$.
\begin{proof}
Let $U_{j, l}(X)$ denotes the sum in \eqref{vdcest3}, however it will be more handy to work with its dyadic counterpart. For this purpose, one  splits  $U_{j, l}(X)$ into $\log X$ dyadic pieces which have the following form $\sum_{Y<k\le Y'\le 2Y}e^{2\pi i(\a jkl+m\vp(kl))}$, where $Y\in[1, X]$. We have just reduced the matters to find an upper bound for the last sum. We may assume, without loss of generality, that $m>0$ and let $F(t)=\a jlt+m\vp(lt)$ for $t\in[Y, 2Y]$.
If $c>1$ then $t^2\vp''(t)=\vp(t)(\g+\te_1(t))(\g-1+\te_2(t))$ and
$$|F''(t)|=|ml^2\vp''(lt)|\simeq|ml^2\vp''(lY)|\simeq ml^2\frac{\vp(lY)}{(lY)^2}.$$
If $c=1$ then $t^2\vp''(t)=\vp(t)(\g+\te_1(t))\s(t)\t(t)$ and
$$|F''(t)|=|ml^2\vp''(lt)|\simeq\frac{ml^2\s(lY)\vp(lY)}{(lY)^2}.$$
Thus by Lemma \ref{vdc}
we obtain (if $c>1$ one can think that $\s$ is constantly equal to $1$)
\begin{align*}
 \bigg|\sum_{Y<k\le Y'\le 2Y}e^{2\pi i(\a klq+m\vp(kl))}\bigg|&\lesssim Y \left(\frac{ml^2\s(lY)\vp(lY)}{(lY)^2}\right)^{1/2}+\left(\frac{(lY)^2}{ml^2\s(lY)\vp(lY)}\right)^{1/2}\\
 &\lesssim m^{1/2}lY\big(\s(lY)\vp(lY)\big)^{-1/2}.
\end{align*}
Finally we obtain that
\begin{align*}
   |U_{j, l}(X)|\lesssim\log X\sup_{Y\in[1, X]}m^{1/2}lY\big(\s(lY)\vp(lY)\big)^{-1/2}
\lesssim m^{1/2}\log(lX)\ lX\big(\s(lX)\vp(lX)\big)^{-1/2},
 \end{align*}
 since $x\mapsto x\big(\s(x)\vp(x)\big)^{-1/2}$ is increasing. The proof of Lemma \ref{vdclem2} follows.
\end{proof}
\subsection{The estimates for $S_1$ and $S_{21}$}
Let $U_l(x)=\sum_{P/l\le k\le x}\ e^{2\pi i(\a lk+m\vp(lk))}$. Applying summation by parts to the inner sum in $S_1$ we see that
\begin{align*}
  S_1=\sum_{l\le v}\mu(l)\sum_{P/l<k\le P_1/l}\log k e^{2\pi i(\a kl+m\vp(kl))}
  =\sum_{l\le v}\mu(l)\bigg(U_l(P_1/l)\log(P_1/l)-\int_{P/l}^{P_1/l}U_l(x)\frac{dx}{x}\bigg).
\end{align*}
This gives
\begin{align*}
  |S_1|\le\log P_1\ \sum_{l\le v}\sup_{P/l\le x\le P_1/l}|U_l(x)|.
\end{align*}
In a similar way (having in mind that $v=\vp(P_1)P_1^{-5/8}$) we get
\begin{align*}
  |S_{21}|\le\sum_{l\le v}|\Pi_v(l)||U_l(P_1/l)|
 \lesssim \log P_1\ \sum_{l\le v}|U_l(P_1/l)|,
\end{align*}
since $|\Pi_v(l)|\le\sum_{k|l}\Lambda(k)=\log l$. Now Lemma \ref{vdclem2} applied to $U_l(x)$
allows us to conclude that
\begin{align*}
  |S_1|,\ |S_{21}| \le\log P_1\ \sum_{l\le v}\sup_{P/l\le x\le P_1/l}|U_l(x)|&\lesssim
\log P_1\ \sum_{l\le v}\sup_{P/l\le x\le P_1/l}|m|^{1/2}\log(lx)\ lx\big(\s(lx)\vp(lx)\big)^{-1/2}\\
&\lesssim
\vp(P_1)P_1^{-5/8}\log^2 P_1\ |m|^{1/2}P_1\big(\s(P_1)\vp(P_1)\big)^{-1/2}\\
&= |m|^{1/2}\log^2 P_1\ \s(P_1)^{-1/2}\vp(P_1)^{1/2} P_1^{3/8}.
\end{align*}
In the third inequality we have used the fact that the function $x\mapsto x\big(\s(x)\vp(x)\big)^{-1/2}$ is increasing. The proof of \eqref{ineq1} follows.
\subsection{The estimates for $S_{22}$ and $S_3$}
Here we shall bound $S_{22}$ and $S_3$. We start with some preliminary reductions which allow us to deal with both sums in a unified way. Similarly as for $S_1$ and $S_2$ we will be working with dyadic sums.  Observe that for $S_{22}$, we have
\begin{multline}\label{s22}
  |S_{22}|=\bigg|\sum_{v<l\le v^2}\sum_{P/l<k\le P_1/l}\Pi_{v}(l)e^{2\pi i(\a kl+m\vp(kl))}\bigg|\\
  \lesssim\log^2P_1\sup_{L\in[v, v^2]}\sup_{K\in[P/v^2, P_1/v]}\sup_{L'\in[L, 2L]}\sup_{K'\in[K, 2K]}\bigg|\sum_{L<l\le L'\le 2L}
  \sum_{\genfrac{}{}{0pt}{}{K<k\le K'\le 2K}{P<kl\le P_1}}\Pi_{v}(l)e^{2\pi i(\a kl+m\vp(kl))}\bigg|,
\end{multline}
and for $S_3$, we have
\begin{multline}\label{s3}
  |S_3|=\bigg|\sum_{v<l\le P_1/v}\sum_{\genfrac{}{}{0pt}{}{P/l< k\le P_1/l}{k>v}}\Lambda(k)\Xi_v(l)e^{2\pi i(\a kl+m\vp(kl))}\bigg|\\
 \lesssim\log^2P_1\sup_{L\in[v, P_1/v]}\sup_{K\in[v, P_1/v]}\sup_{L'\in[L, 2L]}\sup_{K'\in[K, 2K]}\bigg|
  \sum_{L<l\le L'\le 2L}\sum_{\genfrac{}{}{0pt}{}{K<k\le K'\le2K}{P< kl\le P_1}}\Lambda(k)\Xi_v(l)e^{2\pi i(\a kl+m\vp(kl))}\bigg|,
\end{multline}
where $\Pi_{v}(l)$ and $\Xi_v(l)$ are defined as in \eqref{pixi}. Now it is not difficult to observe that
\begin{align}\label{aritmfun}
\sum_{L<l\le2L}|\Pi_{v}(l)|^2\lesssim L\log^2 L, \ \ \ \ \mbox{and}\ \ \ \ \sum_{L<l\le2L}|\Xi_v(l)|^2\lesssim L\log^3L.
\end{align}
In view of these decompositions it remains to show.
\begin{lem}\label{billem}
Let $K, L\in\N$, $m\in\Z\setminus\{0\}$. Assume that $|m|\min\{K, L\}\le \s(KL)\vp(KL)$ and $\vp(KL)\le \min\{K, L\}^4$. Then
 \begin{align}\label{billem1}
  \bigg|\sum_{L<l\le L'\le 2L}\sum_{\genfrac{}{}{0pt}{}{K<k\le K'\le2K}{P< kl\le P_1}}&\Delta_1(l)\Delta_2(k)e^{2\pi i(\a kl+m\vp(kl))}\bigg|\\
\nonumber&\lesssim |m|^{1/6}\ \log^{2}L\ \log^{2}K\ \big(\s(KL)\vp(KL)\big)^{-1/6}\min\{K, L\}^{1/6}\ KL,
\end{align}
for every sequences of complex numbers $(\Delta_1(l))_{l\in(L, 2L]}$, and  $(\Delta_2(k))_{k\in(K, 2K]}$ such that
\begin{align}\label{logineq}
 \sum_{L<l\le 2L}|\Delta_1(l)|^2\lesssim L\log^{3}L,\ \ \ \mbox{and}\ \ \ \sum_{K<k\le 2K}|\Delta_2(k)|^2\lesssim K\log^{3}K.
\end{align}
\end{lem}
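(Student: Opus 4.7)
The plan is the classical \emph{A-process} for bilinear exponential sums: apply Cauchy--Schwarz to peel off one factor, then Lemma \ref{vdc1} (Weyl--Van der Corput) to convert the inner sum into a short shift-average, and finally Lemma \ref{vdc} (Van der Corput) on the innermost exponential sum, whose curvature is controlled through the higher-derivative identities for $\vp$ from Lemma \ref{funlemfi}. By the symmetry $kl\leftrightarrow lk$ in the phase $\a kl+m\vp(kl)$ I may assume $K=\min\{K,L\}$. Setting $T_l:=\sum_{K<k\le 2K}\Delta_2(k)\,e^{2\pi i(\a kl+m\vp(kl))}$, Cauchy--Schwarz in $l$ yields $|S|^2\lesssim L\log^3 L\cdot\sum_l|T_l|^2$; Lemma \ref{vdc1} applied to $T_l$ with an integer shift parameter $R\ge1$ (to be optimized) then gives, after swapping $\sum_l,\sum_k$ and isolating the diagonal $r=0$,
\begin{align*}
 \sum_l|T_l|^2\lesssim \frac{K+R}{R}\Bigl(KL\log^3 K+\sum_{0<|r|<R}\sum_k|\Delta_2(k+r)\overline{\Delta_2(k)}|\cdot|U_{k,r}|\Bigr),
\end{align*}
with $U_{k,r}:=\sum_{L<l\le 2L}e^{2\pi i(\a rl+m[\vp((k+r)l)-\vp(kl)])}$.

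The key step is to bound $U_{k,r}$ for $r\neq 0$. Its phase has second derivative in $l$ equal to $m[(k+r)^2\vp''((k+r)l)-k^2\vp''(kl)]$; applying the mean value theorem to $\phi(k)=k^2\vp''(kl)$ together with $t\vp'''(t)=\vp''(t)(\g-2+\te_3(t))$ from Lemma \ref{funlemfi} reduces this to $\simeq mrk\vp''(kl)$. Chaining $t\vp''(t)=\vp'(t)\s(t)\t(t)$ and $t\vp'(t)=\vp(t)(\g+\te_1(t))$ (with the convention $\s\equiv1$, $\t\equiv\g-1$ when $c>1$; when $c=1$, Lemma \ref{funlemfi} provides the slow variation of $\s$ and the boundedness of $-\t$ away from $0$ and $\infty$) gives $k\vp''(kl)\simeq\vp(kl)\s(kl)/(kl^2)$, uniformly in $l\in(L,2L]$, $k\in(K,2K]$ and $1\le r\le R$ with $R\le K$. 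Writing $SV:=\s(KL)\vp(KL)$ and $\eta_r:=|m||r|SV/(KL^2)$, Lemma \ref{vdc} now delivers $|U_{k,r}|\lesssim L\eta_r^{1/2}+\eta_r^{-1/2}$.

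Applying Cauchy--Schwarz to get $\sum_k|\Delta_2(k+r)\overline{\Delta_2(k)}|\le K\log^3K$, and using $\sum_{0<|r|<R}|r|^{\pm 1/2}\simeq R^{3/2},R^{1/2}$, one collects the bound
\begin{align*}
 |S|^2\lesssim\log^{O(1)}\Bigl[\tfrac{K^2L^2}{R}+K^{3/2}L\sqrt{|m|SV}\,R^{1/2}+\tfrac{K^{5/2}L^2}{\sqrt{|m|SV}}\,R^{-1/2}\Bigr].
\end{align*}
The choice $R\simeq K^{1/3}L^{2/3}(|m|SV)^{-1/3}$ balances the first and third terms against the target $|m|^{1/3}(SV)^{-1/3}K^{7/3}L^2$ (which is $|m|^{1/3}(SV)^{-1/3}\min\{K,L\}^{1/3}(KL)^2$) exactly, and the middle term is smaller by the factor $(SV/KL)^{2/3}\lesssim1$ (using \eqref{ratefi} when $c=1$; trivial when $c>1$). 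The hypothesis $|m|K\le SV$ is precisely what forces $R\ge 1$, and $\vp(KL)\le K^4$ forces $R\le K$, so both the Van der Corput estimate and the Weyl--Van der Corput prefactor $(K+R)/R\simeq K/R$ operate in the admissible range.

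The main obstacle is the second-derivative computation for the twisted phase $\vp((k+r)l)-\vp(kl)$: the cancellation has to be tracked through three successive derivative identities for $\vp$, and the $c=1$ case requires the quantitative control of $\s$ and $\t$ supplied by Lemma \ref{funlemfi}. Once that curvature estimate is in place, the two hypotheses in the lemma are exactly calibrated so that the optimal shift $R$ lies in the usable window $[1,K]$ and the final bound matches the claim.
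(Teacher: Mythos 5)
Your skeleton is the same as the paper's: Cauchy--Schwarz in $l$ using \eqref{logineq}, Weyl--Van der Corput (Lemma \ref{vdc1}) in $k$ with a shift parameter $R$, and Van der Corput (Lemma \ref{vdc}) on the differenced sum over $l$, whose second derivative is computed by the mean value theorem together with the identities of Lemma \ref{funlemfi} (the $\s,\t$ version when $c=1$). That curvature estimate, which you rightly single out as the main point, agrees with the paper's, and your three-term bound for $|S|^2$ is correct. The gap is in the optimization of $R$. Writing $SV=\s(KL)\vp(KL)$ and taking your $R\simeq K^{1/3}L^{2/3}(|m|SV)^{-1/3}$, the condition $R\le K$ is equivalent to $L^2\le |m|K^2SV$, and this does \emph{not} follow from $\vp(KL)\le\min\{K,L\}^4$: with $m=1$, $\vp(x)\simeq x^{\g}$ and $L\simeq K^{a}$, both hypotheses hold for all $a\le 4/\g-1$, while $R\le K$ fails as soon as $a>(2+\g)/(2-\g)$, a nonempty window whenever $\g<1$ (and, because of the slowly varying factors $\s,\ell_{\vp}$, a similar window occurs for $c=1$). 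So the step $(K+R)/R\simeq K/R$, on which your collected bound rests, is unjustified in part of the admissible range. A smaller bookkeeping slip: your $R$ balances the first and \emph{second} terms (each equal to the square of the claimed bound times $(SV/KL)^{2/3}$), not ``the first and third terms against the target exactly''; this part is harmless, since all three terms do lie below the target when $R\in[1,K]$.

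The clean repair is the paper's parameter: take $R=\lceil\big(SV/(|m|K)\big)^{1/3}\rceil$. Then $R\ge1$ is literally the hypothesis $|m|K\le SV$, $R\lesssim K$ is literally $\vp(KL)\le K^4$ (with $\s$ bounded and $|m|\ge1$), the first term $K^2L^2/R$ is exactly the square of the claimed bound, and the other two terms are smaller still, using $SV\lesssim KL$ and $|m|K\le SV$ respectively. (The paper in fact merges the two Van der Corput terms into one before summing over $r$, using $SV\lesssim KL$ and $|m||r|\ge1$, so it only balances two terms; your sharper three-term accounting is fine but does not change anything.) Alternatively you could keep your $R$ capped at $K$: in the regime $L^2>|m|K^2SV$ the prefactor becomes $O(1)$, the diagonal is controlled by $SV\lesssim |m|K^4$, and the $\eta_r^{1/2}$-term still closes, but only after an extra argument combining the defining inequality of that regime with both hypotheses --- an argument you have not supplied. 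As written, the verification that the shift parameter lies in the usable window $[1,K]$ is incorrect, even though every other ingredient matches the paper's proof.
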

Assuming momentarily Lemma \ref{billem} we are in a position where we can easily derive the bounds for $S_{22}$ and $S_3$. Recall that $M=P^{1+\chi+\e}\vp(P)^{-1}$ with $\chi>0$ such that $16(1-\g)+28\chi<1$ and $0<\e<\chi/100$. The inequalities in \eqref{logineq} are satisfied with a suitable choice of $\Delta_1(l)$ and $\Delta_2(k)$ for both dyadic subsums of $S_{22}$ and $S_3$, by \eqref{aritmfun}. Observe that for sufficiently large $P_1\simeq P$ and an appropriate choice of $\e_1>0$, we have
$$P_1/v=P_1\big(\vp(P_1)P_1^{-5/8}\big)^{-1}=P_1^{13/8}\vp(P_1)^{-1}\le P_1^{13/8+\e_1-\g}\le P_1^{3/4},$$
$$P_1/v^2=P_1\big(\vp(P_1)P_1^{-5/8}\big)^{-2}=P_1^{18/8}\vp(P_1)^{-2}\ge P_1^{1/4},$$
$$v=\vp(P_1)P_1^{-5/8}\ge P_1^{\g-\e_1-5/8}\ge P_1^{1/4},\ \ \ \mbox{and}\ \ \ v^2=(\vp(P_1)P_1^{-5/8}\big)^{2}\le P_1^{3/4},$$
since $\g>15/16>7/8$. Therefore, in both cases $K, L\in[P_1^{1/4}, P_1^{3/4}]$ and $KL\simeq P_1$, hence $P_1^{1/4}\le \min\{K, L\}\le P_1^{1/2}$.
 Thus, we see that $\vp(KL)\le \min\{K, L\}^4$, if not, then $\min\{K, L\}^4<\vp(KL)\le \vp(P_1)\le P_1$, hence $\min\{K, L\}< P_1^{1/4}$ contrary to what we have just shown. Finally, it remains to verify that $|m|\min\{K, L\}\le \s(KL)\vp(KL)$. Indeed, by assumption $3/2+\chi+4\e-2\g<1/2(4(1-\g)+10\chi-1)<0$, thus
\begin{multline*}
  |m|\min\{K, L\}\le MP_1^{1/2}=P_1^{3/2+\chi+\e}\vp(P_1)^{-2}\s(P_1)^{-1}\s(P_1)\vp(P_1)\\
  \lesssim P_1^{3/2+\chi+4\e-2\g }\s(P_1)\vp(P_1)\lesssim P_1^{1/2(4(1-\g)+10\chi-1)}\s(P_1)\vp(P_1)\lesssim \s(KL)\vp(KL).
\end{multline*}

 Therefore, \eqref{billem1} yields
\begin{multline*}
\bigg|\sum_{L<l\le L'\le 2L}\sum_{\genfrac{}{}{0pt}{}{K<k\le K'\le2K}{P< kl\le P_1}}\Delta_1(l)\Delta_2(k)e^{2\pi i(\a kl+m\vp(kl))}\bigg|\\
\lesssim  |m|^{1/6}\ \log^{2}L\ \log^{2}K\ \big(\s(KL)\vp(KL)\big)^{-1/6}\ \min\{K, L\}^{1/6}\ KL\\
\lesssim |m|^{1/6}\ \log^{4}P_1\ \big(P_1^{1/2}\big)^{1/6}\ P_1\ \big(\s(P_1)\vp(P_1)\big)^{-1/6}\\
\lesssim |m|^{1/6}\log^{4}P_1\ P_1^{13/12}\vp(P_1)^{-1/6}\s(P_1)^{-1/6}.
   \end{multline*}
The proof of estimates \eqref{ineq2} is completed, since in view of the dyadic decompositions \eqref{s22} and \eqref{s3}, at the expense of $\log^2P_1$ factor we obtain
\begin{align*}
  |S_{22}|,\ |S_3|\lesssim |m|^{1/6}\log^{6}P_1\ \s(P_1)^{-1/6}\vp(P_1)^{-1/6}P_1^{13/12}.
\end{align*}

\begin{proof}[Proof of Lemma \ref{billem}]
We divide the proof into three steps. We will follow the ideas from \cite{HB} Section 5, or \cite{GK} Section 4. In the first two steps we collect necessary tools which allows us to illustrate the proof of inequality \eqref{billem1} in the third step. The symmetry between the variables $k, l$ in the sums in \eqref{billem1}  allows us to always arrange the parameters $K, L$ to satisfy $K\le L$.\\

\noindent {\textsf{\textbf{\underline{Step 1.}}}} For $r\in\Z$ define
 \begin{align*}
   E_r=\sum_{L<l\le2L}\sum_{\genfrac{}{}{0pt}{}{K<k, k+r\le K'\le 2K}{P< kl, (k+r)l\le P_1}}\Delta_2(k)\overline{\Delta_2(k+r)}e^{2\pi i (\a kl+m\vp(kl) - \a (k+r)l-m\vp((k+r)l))}.
 \end{align*}
 Notice that
 \begin{align}\label{eineq0}
   |E_0|\le\sum_{L<l\le2L}\sum_{K<k\le K'\le2K}|\Delta_2(k)|^2\lesssim L\sum_{K<k\le 2K}|\Delta_2(k)|^2\lesssim LK\log^3K.
 \end{align}
 Moreover, for any $r\in\Z\setminus\{0\}$ we have
  \begin{align*}
   E_r=\sum_{\max\{K, K-r\}<k\le \min\{K', K'-r\}}\Delta_2(k)\overline{\Delta_2(k+r)}\widetilde{S}(k, r),
 \end{align*}
where
\begin{align*}
  \widetilde{S}(k, r)=\sum_{\max\{L, \frac{P}{k}, \frac{P}{k+r}\}<l\le\min\{2L, \frac{P_1}{k}, \frac{P_1}{k+r}\}}e^{2\pi i (\a kl+m\vp(kl) - \a (k+r)l-m\vp((k+r)l))}.
\end{align*}
One can see that for every $R\ge1$ we have
\begin{multline}\label{eineq}
   \sum_{1\le|r|\le R}|E_r|\lesssim \sum_{1\le|r|\le R}\sum_{K<k, k+r\le K'}|\Delta_2(k)|^2|\widetilde{S}(k, r)|+|\overline{\Delta_2(k+r)}|^2|\widetilde{S}(k+r, -r)|\\
  \le\sum_{1\le|r|\le R}\sum_{K<k, k+r\le K'}|\Delta_2(k)|^2|\widetilde{S}(k, r)|
   +\sum_{1\le|r|\le R}\sum_{K<k, k-r\le K'}|\Delta_2(k)|^2|\widetilde{S}(k, -r)|\\
   \lesssim\sum_{1\le|r|\le R}\sum_{K<k, k+r\le K'}|\Delta_2(k)|^2|\widetilde{S}(k, r)|
   =\sum_{K<k\le K'}|\Delta_2(k)|^2\sum_{1\le|r|\le R}|\widetilde{S}(k, r)|
   \mathbf{1}_{(K, K']}(k+r),
 \end{multline}
since $|\widetilde{S}(k, r)|=|\widetilde{S}(k+r, -r)|$.\\

 \noindent {\textsf{\textbf{\underline{Step 2.}}}} We are going to show that for every $m\in\N$ and $k\in(K, 2K]$ and $R\ge1$ we have
  \begin{align}\label{vdcest4}
    \frac{1}{R}\sum_{1\le |r|\le R}|\widetilde{S}(k, r)|\mathbf{1}_{(K, 2K]}(k+r)\lesssim m^{1/2}R^{1/2}KL\big(\s(KL)\vp(KL)\big)^{-1/2}K^{-1/2}.
  \end{align}
For this purpose we will proceed likewise in Lemma \ref{vdclem2}. Let $$F(x)=\a kx+m\vp(kx) - \a (k+r)x-m\vp((k+r)x)),$$ for $x\in(L, 2L]$ and note that according to Lemma \ref{funlemfi} and the mean value theorem,  for some $\eta\in(0, 1)$ and $\eta_{k, r}=k+\eta r$ if $r>0$ and $\eta_{k, r}=k+r-\eta r$ if $r<0$, we have
\begin{multline*}
 |F''(x)|=|mk^2\vp''(kx)-m(k+r)^2\vp''((k+r)x))|\\
= \big|r\big(2m\eta_{k, r}\vp{''}(x\eta_{k, r})+m\eta_{k, r}^{2}x\vp{'''}(x\eta_{k, r})\big)\big|\\
= |rm\eta_{k, r}\vp{''}(x\eta_{k, r})(2+\b_{3}+\theta_{3}(x\eta_{k, r}))|\\
\simeq |mrK\vp{''}(KL)|\simeq \frac{m|r|K\s(KL)\vp(KL)}{(KL)^{2}},
\end{multline*}
since $k, k+r\in(K, 2K]$ and $\eta_{k, r}\in(K, 2K]$. Therefore by Lemma \ref{vdc} we obtain (as before we think that $\s$ is constantly equal to $1$, if $c>1$)
\begin{align*}
  |\widetilde{S}(k, r)|
  &\lesssim L\left(\frac{m|r|K\s(KL)\vp(KL)}{(KL)^2}\right)^{1/2}+\left(\frac{(KL)^2}{m|r|K\s(KL)\vp(KL)}\right)^{1/2}\\
  &\lesssim (m|r|L)^{1/2}+KL\big(\s(KL)\vp(KL)\big)^{-1/2}K^{-1/2}\\
  &\lesssim m^{1/2}|r|^{1/2}KL\big(\s(KL)\vp(KL)\big)^{-1/2}K^{-1/2},
\end{align*}
and \eqref{vdcest4} follows.

Therefore combining \eqref{eineq} with \eqref{vdcest4} we obtain that
\begin{align}\label{eineq1}
  \frac{1}{R}\sum_{1\le|r|\le R}|E_r|\lesssim\sum_{K<k\le K'}|\Delta_2(k)|^2\frac{1}{R}\sum_{1\le|r|\le R}|\widetilde{S}(k, r)|\mathbf{1}_{(K, K']}(k+r)\\
  \nonumber\lesssim K\log^3K\cdot m^{1/2}R^{1/2}KL\big(\s(KL)\vp(KL)\big)^{-1/2}K^{-1/2}.
\end{align}

\noindent {\textsf{\textbf{\underline{Step 3.}}}} By the Cauchy--Schwartz inequality and Lemma \ref{vdc1}, applied with $H=K$ and an integer $1\le R\le K$ which will be specified later, we immediately see that

\begin{multline}\label{eineq2}
   \bigg|\sum_{L<l\le L'\le2L}\sum_{\genfrac{}{}{0pt}{}{K<k\le K'\le 2K}{P< kl\le P_1}}\Delta_1(l)\Delta_2(k)e^{2\pi i (\a kl+m\vp(kl)}\bigg|^2\\
   \le\bigg(\sum_{L<l\le2L}|\Delta_1(l)|^2\bigg) \sum_{L<l\le L'\le 2L}\bigg|\sum_{\genfrac{}{}{0pt}{}{K<k \le K'\le 2K}{P< kl\le P_1}}\Delta_2(k)e^{2\pi i (\a kl+m\vp(kl)}\bigg|^2\\
   \lesssim L\log^{3}L \sum_{L<l\le2L}\bigg|\sum_{\genfrac{}{}{0pt}{}{K<k\le K'\le 2K}{P< kl\le P_1}}\Delta_2(k)e^{2\pi i (\a kl+m\vp(kl)}\bigg|^2\\
   \lesssim L\log^{3}L\ \frac{K+R}{R}\sum_{|r|\le R}\left(1-\frac{|r|}{R}\right)|E_r|\\
   \lesssim L^2K\log^{3}L\log^{3}K\ \frac{K+R}{R}+L\log^{3}L\ \frac{K+R}{R}\sum_{1\le|r|\le R}|E_r|\\
   \lesssim \log^{3}L\log^{3}K\left(\frac{L^2K^2}{R}+K^2L m^{1/2}R^{1/2}KL\big(\s(KL)\vp(KL)\big)^{-1/2}K^{-1/2}\right),
 \end{multline}
 where we have used the estimate \eqref{eineq0} for $|E_0|$ and the inequality \eqref{eineq1}. Now we are able to finish our proof. Taking $R=\lceil m^{-a}K^{-b}L^c\big(\s(KL)\vp(KL)\big)^{-d}\rceil$ for some $a, b, c, d\in\R$ we see that the last expression in \eqref{eineq2} is bounded by
\begin{multline*}
\log^{3}L\log^{3}K\Big(m^aK^{2+b}L^{2-c}\big(\s(KL)\vp(KL)\big)^{d}\\
+ m^{1/2-a/2}K^{5/2-b/2}L^{2+c/2}\big(\s(KL)\vp(KL)\big)^{-d/2-1/2}\Big).
   \end{multline*}
We will impose some restrictions on $a, b, c, d\in\R$ which make the last two terms equal. It suffices to take
$a=b=1/3, c=0, d=-1/3$.
We now easily see that $1\le m^{-1/3}K^{-1/3}\big(\s(KL)\vp(KL)\big)^{1/3}\le K$ by our assumptions, thus $1\le R\lesssim K$ and consequently \eqref{billem1} follows, since
\begin{multline*}
\bigg|\sum_{L<l\le L'\le2L}\sum_{\genfrac{}{}{0pt}{}{K<k\le K'\le 2K}{P< kl\le P_1}}\Delta_1(l)\Delta_2(k)e^{2\pi i (\a kl+m\vp(kl)}\bigg|\\
\lesssim  m^{1/6}\ \log^{2}L\ \log^{2}K\ \big(\s(KL)\vp(KL)\big)^{-1/6}\ K^{1/6}\ KL.
   \end{multline*}
   \end{proof}
\section{Proof of Lemma \ref{formlem}}\label{sectformlem}
This section provides a detailed proof of Lemma \ref{formlem}. We are going to follow the ideas of Heath--Brown  \cite{HB}. We shall split the proof of \eqref{form} into three steps. In the third step we will be able to use estimate carried by Lemma \ref{billem} which will turn out to be decisive there and permits us to complete the proof.
\subsection{The first reduction} We start with the following.
\begin{lem}\label{lemest0}
Let $\Phi(x)=\{x\}-1/2$ and $\Lambda(n)$ denote von Mangoldt's function as in Section 3 and $\gamma, \chi>0$ satisfy conditions from Lemma \ref{formlem}. Then for every $q\in\N$ and $0\le a\le q-1$ such that $(a, q)=1$, $N\in\N$ and $0<\e<\chi/100$ we have
\begin{align}\label{lem0eq1}
  \sum_{\genfrac{}{}{0pt}{}{p\in\mathbf{P}_{h, N}}{p\equiv a(\mathrm{mod}q)}}\vp'(p)^{-1}\log p\ e^{2\pi i \xi p}&=\sum_{\genfrac{}{}{0pt}{}{p\in\mathbf{P}_{N}}{p\equiv a(\mathrm{mod}q)}} \log p\ e^{2\pi i \xi p}\\
 \nonumber +\sum_{k=1}^N \vp'(k)^{-1}&\big(\Phi(-\vp(k+1))-\Phi(-\vp(k))\big)\Lambda_{a, q}(k)e^{2\pi i \xi k}+
  O\big(N^{1-\chi+\e}\big),
\end{align}
where $\Lambda_{a, q}(k)=\Lambda(k)\mathbf{1}_{P_{a, q}}(k)$ and $P_{a, q}=\{j\in\N: j\equiv a(\mathrm{mod}q)\}$.
\end{lem}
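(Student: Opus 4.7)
The plan is to peel the sum over $\mathbf{P}_{h,N}$ apart using the sawtooth identity from Lemma \ref{intlem}, which tells us that for all sufficiently large primes $p$,
$$\mathbf{1}_{\mathbf{P}_h}(p)=\lfloor-\vp(p)\rfloor-\lfloor-\vp(p+1)\rfloor.$$
Since $\mathbf{P}_h\subseteq\mathbf{P}$, this lets me write
$$S:=\sum_{\genfrac{}{}{0pt}{}{p\in\mathbf{P}_{h,N}}{p\equiv a(\mathrm{mod}\,q)}}\vp'(p)^{-1}\log p\,e^{2\pi i\xi p}=\sum_{k=1}^{N}\vp'(k)^{-1}\Lambda_{a,q}(k)e^{2\pi i\xi k}\bigl(\lfloor-\vp(k)\rfloor-\lfloor-\vp(k+1)\rfloor\bigr)+E_{1},$$
where $E_{1}$ collects the contribution from prime powers $k=p^{m}$, $m\ge2$, and from the finitely many small primes excluded by Lemma \ref{intlem}. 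Using the rough bound $\vp'(x)^{-1}\lesssim x^{1-\g+\e}$ from Lemma \ref{formfunlem} and the standard estimate for prime powers, I get $E_{1}\lesssim N^{(3-2\g)/2+\e}\lesssim N^{1-\chi+\e}$, since $16(1-\g)+28\chi<1$ forces $\chi<1/28<(2\g-1)/2$.

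Next I apply the identity $\lfloor y\rfloor=y-\Phi(y)-1/2$ to both floors, which gives
$$\lfloor-\vp(k)\rfloor-\lfloor-\vp(k+1)\rfloor=\bigl(\vp(k+1)-\vp(k)\bigr)+\bigl(\Phi(-\vp(k+1))-\Phi(-\vp(k))\bigr),$$
so $S$ splits as a ``smooth'' piece plus precisely the $\Phi$--sum appearing in \eqref{lem0eq1}. In the smooth piece I write $\vp(k+1)-\vp(k)=\int_{k}^{k+1}\vp'(t)\,dt=\vp'(k)+\int_{k}^{k+1}\!\!\int_{k}^{t}\vp''(s)\,ds\,dt$. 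Lemma \ref{funlemfi} yields $x\vp''(x)=(\b_{2}+\te_{2}(x))\vp'(x)$, so $\vp''(s)/\vp'(k)\simeq 1/k$ uniformly for $s\in[k,k+1]$, and therefore
$$\vp'(k)^{-1}(\vp(k+1)-\vp(k))=1+O(1/k).$$
Summing the error against $\Lambda_{a,q}(k)$ contributes only $O\bigl(\sum_{k\le N}\Lambda(k)/k\bigr)=O(\log N)$, which is absorbed into $O(N^{1-\chi+\e})$.

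The remaining leading piece is $\sum_{k\le N}\Lambda_{a,q}(k)e^{2\pi i\xi k}$, which equals $\sum_{p\in\mathbf{P}_{N},\,p\equiv a(\mathrm{mod}\,q)}\log p\cdot e^{2\pi i\xi p}$ up to another $O(N^{1/2}\log^{2}N)$ prime-power defect. Collecting all errors gives \eqref{lem0eq1}. Conceptually nothing is hard here: the only place where care is needed is the bookkeeping in the prime-power error $E_{1}$, because the weight $\vp'(k)^{-1}$ is growing, but the explicit polynomial bound on $\vp'(k)^{-1}$ from Lemma \ref{formfunlem} combined with the thinness of prime powers ($O(\sqrt{N}/\log N)$ of them below $N$) keeps this well below the target $N^{1-\chi+\e}$ under the hypothesis $16(1-\g)+28\chi<1$. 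The substantive oscillatory estimates on the $\Phi$-sum are deferred to Section \ref{sectexp} via the Fourier expansion \eqref{four1}--\eqref{fcoe2}.
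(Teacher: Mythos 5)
Your proof is correct and follows essentially the same route as the paper: invoke Lemma~\ref{intlem} to replace $\mathbf{1}_{\mathbf{P}_h}(p)$ by the sawtooth difference $\lfloor-\vp(p)\rfloor-\lfloor-\vp(p+1)\rfloor$, separate off the $\Phi$-terms, Taylor-expand $\vp(k+1)-\vp(k)$ to extract the leading $1$ plus an $O(1/k)$ remainder (which sums to $O(\log N)$ via Mertens), and trade primes for von Mangoldt weights at a cost of $O(N^{3/2-\g+\e})$. The only cosmetic difference is that you perform the prime-power conversion before, rather than after, the sawtooth expansion, and your assertion $\vp''(s)/\vp'(k)\simeq 1/k$ should read $\lesssim 1/k$ when $c=1$ (since $\b_2=0$ there), but the proof only uses the upper bound so nothing breaks.
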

\begin{proof}
We shall apply Lemma \ref{filem} to the first sum in \eqref{lem0eq1}. However, we should remember that the identity from \eqref{intlemform} holds for sufficiently large $p\in\mathbf{P}_h$, say $p\ge N_0$. Therefore, we have to split the sum in \eqref{lem0eq1} into two parts, that over $p\in\mathbf{P}_{h, N_0}$ and that over $p\in\mathbf{P}_{h, N}$ with $p\ge N_0$. When $p\in\mathbf{P}_{h, N_0}$ the sum can be trivially estimated from above by $N_0$, otherwise when $p\in\mathbf{P}_{h, N}$ with $p\ge N_0$ we use Lemma \ref{filem}. Finally, we  complete the summation $p\in\mathbf{P}_{N}$ with $p\ge N_0$ in the second sum (after the application of Lemma \ref{filem}) to all $p\in\mathbf{P}_{N}$ at the expense of additional term depending on $N_0$ which is harmless, since we are only interested in large values of $N\ge N_0$. This remark shows that one can assume that the identity in \eqref{intlemform} holds for all $p\in\mathbf{P}_{h}$.

According to Lemma \ref{filem} and the definition of function $\Phi(x)=\{x\}-1/2$ we obtain that for every $p\in\N$ there exists  $\xi_p\in(0, 1)$ such that
\begin{align*}
  \lfloor-\vp(p)\rfloor-\lfloor-\vp(p+1)\rfloor&=\vp(p+1)-\vp(p)+\Phi(-\vp(p+1))-\Phi(-\vp(p))\\
  &=\vp'(p)+\vp''(p+\xi_p)/2+\Phi(-\vp(p+1))-\Phi(-\vp(p)).
\end{align*}
Thus
\begin{multline*}
  \sum_{\genfrac{}{}{0pt}{}{p\in\mathbf{P}_{h, N}}{p\equiv a(\mathrm{mod} q)}}\vp'(p)^{-1}\log p\ e^{2\pi i \xi p}=\sum_{\genfrac{}{}{0pt}{}{p\in\mathbf{P}_{N}}{p\equiv a(\mathrm{mod}q)}} \vp'(p)^{-1}\big(\lfloor-\vp(p)\rfloor-\lfloor-\vp(p+1)\rfloor\big)\log p\ e^{2\pi i \xi p}\\
  \ \ \ \ \ \ \ \ \  =\sum_{\genfrac{}{}{0pt}{}{p\in\mathbf{P}_{N}}{p\equiv a(\mathrm{mod}q)}} \log p\ e^{2\pi i \xi p}
  +\sum_{\genfrac{}{}{0pt}{}{p\in\mathbf{P}_{N}}{p\equiv a(\mathrm{mod}q)}} \vp'(p)^{-1}\big(\Phi(-\vp(p+1))-\Phi(-\vp(p))\big)\log p\ e^{2\pi i \xi p}
  +O(\log N),
\end{multline*}
since by Mertens theorem (see \cite{Nat} Theorem 6.6, page 160) we have
\begin{align*}
  O\bigg(\sum_{\genfrac{}{}{0pt}{}{p\in\mathbf{P}_{N}}{p\equiv a(\mathrm{mod}q)}} \frac{\vp''(p+\xi_p)\log p}{2\vp'(p)}\ e^{2\pi i \xi p}\bigg)=O\bigg(\sum_{p\in\mathbf{P}_N} \frac{\vp''(p)\log p}{\vp'(p)}\bigg)=
  O\bigg(\sum_{p\in\mathbf{P}_N} \frac{\log p}{p}\bigg)=O(\log N).
\end{align*}
Now observe that
\begin{align*}
  \sum_{\genfrac{}{}{0pt}{}{p\in\mathbf{P}_{N}}{p\equiv a(\mathrm{mod}q)}} &\vp'(p)^{-1}\big(\Phi(-\vp(p+1))-\Phi(-\vp(p))\big)\log p\ e^{2\pi i \xi p}\\
  &=\sum_{k=1}^N \vp'(k)^{-1}\big(\Phi(-\vp(k+1))-\Phi(-\vp(k))\big)\Lambda_{a, q}(k)e^{2\pi i \xi k}+
  O\bigg(\frac{N}{\vp(N)}\sum_{\genfrac{}{}{0pt}{}{p\in\mathbf{P}_N: 1\le p^s\le N}{s\ge2}} \log p\bigg)\\
  &=\sum_{k=1}^N \vp'(k)^{-1}\big(\Phi(-\vp(k+1))-\Phi(-\vp(k))\big)\Lambda_{a, q}(k)e^{2\pi i \xi k}+
  O\big(N^{3/2-\g+\e}\big).
\end{align*}
The last identity follows from the following observation
\begin{align*}
  O\bigg(\sum_{\genfrac{}{}{0pt}{}{p\in\mathbf{P}_N: 1\le p^s\le N}{s\ge2}} \log p\bigg)=O\bigg(\sum_{p\in\mathbf{P}_N: 1\le p^2\le N}\left\lfloor\frac{\log N}{\log p}\right\rfloor \log p\bigg)
  =O\Big(\pi\big(N^{1/2}\big)\log N\Big)=O\big(N^{1/2}\big).
\end{align*}
The proof is completed since $O\big(N^{3/2-\g+\e}\big)=O\big(N^{1-\chi-\e}\big)$. Indeed, we easily see that
$3/2-\g+\e=1-\chi-\e+(2(1-\g)-1+2(2\e+\chi))/2<1-\chi-\e$ as desired.
\end{proof}
\subsection{The second reduction}
The proof of Lemma \ref{formlem} will be completed if we show that
\begin{align}\label{forma1}
  \sum_{k=1}^N \vp'(k)^{-1}\big(\Phi(-\vp(k+1))-\Phi(-\vp(k))\big)\Lambda_{a, q}(k)e^{2\pi i \xi k}=O\big(N^{1-\chi-\chi'}\big),
\end{align}
for $\chi>0$ such that $16(1-\g)+28\chi<1$ and some $\chi'>0$.
\begin{lem}\label{lemest1}
Assume that $P\ge1$ and  $M=P^{1+\chi+\e}\vp(P)^{-1}$ with  $\chi>0$  such that $16(1-\g)+28\chi<1$ and $0<\e<\chi/100$.
Then for every $q\in\N$ and $0\le a\le q-1$ such that $(a, q)=1$ we have
\begin{align}\label{lem1eq1}
  &\sum_{P<k\le P_1\le 2P} \vp'(k)^{-1}\big(\Phi(-\vp(k+1))-\Phi(-\vp(k))\big)\Lambda_{a, q}(k)e^{2\pi i \xi k}\\
  \nonumber=\sum_{0<|m|\le M}&\frac{1}{2\pi i m}\sum_{P<k\le P_1\le 2P} \vp'(k)^{-1}\Lambda_{a, q}(k) \Big(e^{2\pi i(\xi k+m\vp(k+1))}-e^{2\pi i(\xi k+m\vp(k))}\Big)+O\big(P^{1-\chi-\e}\big),
\end{align}
where $\Lambda_{a, q}(k)=\Lambda(k)\mathbf{1}_{P_{a, q}}(k)$ and $P_{a, q}=\{j\in\N: j\equiv a(\mathrm{mod}q)\}$.
\end{lem}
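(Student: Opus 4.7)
The plan is to substitute the Fourier expansion \eqref{four1} directly into each of $\Phi(-\vp(k))$ and $\Phi(-\vp(k+1))$ appearing on the left of \eqref{lem1eq1}. Since $e^{-2\pi i m(-\vp(k))}=e^{2\pi i m\vp(k)}$, the main terms produced by \eqref{four1} reproduce exactly the double sum claimed on the right, and the whole matter is reduced to showing that the error term
\begin{align*}
  \mathcal{E}:=\sum_{P<k\le P_1}\vp'(k)^{-1}\Lambda_{a,q}(k)\Bigl(\min\bigl\{1,\tfrac{1}{M\|\vp(k)\|}\bigr\}+\min\bigl\{1,\tfrac{1}{M\|\vp(k+1)\|}\bigr\}\Bigr)
\end{align*}
is $O(P^{1-\chi-\e})$. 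Using $\vp'(k)^{-1}\simeq k/\vp(k)\lesssim P/\vp(P)$ on $(P,P_1]$ (Lemma \ref{formfunlem}) and the trivial bound $\Lambda_{a,q}(k)\le \log P$, the weights pull out as $(P\log P)/\vp(P)$, so the task is really to bound
\begin{align*}
  \Sigma:=\sum_{P<k\le P_1}\min\bigl\{1,\tfrac{1}{M\|\vp(k)\|}\bigr\}
\end{align*}
(and the analogous quantity with $\vp(k+1)$, handled identically).

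To bound $\Sigma$ I would expand the minimum via \eqref{four2}--\eqref{fcoe2} to write $\Sigma=\sum_{m\in\Z}b_m\sum_{P<k\le P_1}e^{2\pi i m\vp(k)}$. The contribution of $m=0$ is $\lesssim P\log M/M\lesssim \vp(P)P^{-\chi-\e}\log P$. For $m\neq 0$, the inner sum is exactly of the form treated by Van der Corput (Lemma \ref{vdc} or the packaged form in Lemma \ref{vdclem2}), since $|m\vp''(k)|\simeq |m|\vp(P)/P^2$ on $(P,P_1]$ (with the extra $\s(P)$ factor when $c=1$, which is harmless for the final bound). This yields $|\sum_k e^{2\pi i m\vp(k)}|\lesssim |m|^{1/2}\vp(P)^{1/2}+P|m|^{-1/2}\vp(P)^{-1/2}$. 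Combining with $|b_m|\le\min\{1/|m|,M/|m|^2\}$ and summing dyadically over $0<|m|\le M$ and $|m|>M$ produces
\begin{align*}
  \Sigma\lesssim \vp(P)P^{-\chi-\e}\log P+M^{1/2}\vp(P)^{1/2}+P\vp(P)^{-1/2}.
\end{align*}

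Multiplying by the prefactor $(P\log P)/\vp(P)$ and invoking $\vp(P)\simeq P^{\g}$ (up to the slowly varying factor $\ell_{\vp}$, absorbed into $P^{\e'}$ for any tiny $\e'$) transforms the three terms into $P^{1-\chi-\e}\log^2 P$, $P^{(3+\chi+\e)/2-\g}\log P$, and $P^{2-3\g/2}\log P$, respectively. The main ``obstacle''---really a verification---is checking that the second and third exponents sit strictly below $1-\chi-\e$, so that the logarithmic losses are absorbed by an arbitrarily small power of $P$. The first comparison reduces to $3(\chi+\e)<2\g-1$, and the second to $\chi+\e<3\g/2-1$; both are comfortably satisfied under the standing hypotheses $\g>15/16$, $16(1-\g)+28\chi<1$ and $0<\e<\chi/100$ (the first constraint already forces $\chi<(16\g-15)/28$, which is far below $(2\g-1)/3$). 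This yields $\mathcal{E}=O(P^{1-\chi-\e})$ and hence \eqref{lem1eq1}. The design of $M=P^{1+\chi+\e}\vp(P)^{-1}$ is precisely the balance that turns the $m=0$ contribution (of order $\vp(P)/M$) into the target error $P^{-\chi-\e}$ after multiplying by $P^2/\vp(P)$.
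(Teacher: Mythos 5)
Your proposal is correct and follows essentially the same route as the paper: expand $\Phi$ via \eqref{four1}, recover the main double sum exactly, and control the error term by pulling out $\Lambda_{a,q}(k)\vp'(k)^{-1}\lesssim P\log P/\vp(P)$, expanding the minimum through \eqref{four2}--\eqref{fcoe2}, and estimating $\sum_{P<k\le P_1}e^{2\pi i m\vp(k)}$ with the second-derivative Van der Corput bound (which the paper packages as Lemma \ref{vdclem2} with $l=1$, $j=0$), followed by the same exponent verification built into the choice $M=P^{1+\chi+\e}\vp(P)^{-1}$. The only cosmetic point, shared with the paper's own write-up, is that the $m=0$ contribution really yields $P^{1-\chi-\e}\log^2 P$ rather than $P^{1-\chi-\e}$; this logarithmic loss is immaterial for the application in Lemma \ref{formlem}.
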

\begin{proof}
Let $S$ denote the first sum in \eqref{lem1eq1}, then  the Fourier expansion \eqref{four1} of the function $\Phi$ leads us to
\begin{align*}
  S
  &=\sum_{0<|m|\le M}\frac{1}{2\pi i m}\sum_{P<k\le P_1\le 2P} \vp'(k)^{-1}\Lambda_{a, q}(k) \Big(e^{2\pi i(\xi k+m\vp(k+1))}-e^{2\pi i(\xi k+m\vp(k))}\Big)\\
  &+O\bigg(\sum_{P<k\le P_1\le 2P}\vp'(k)^{-1}\Lambda_{a, q}(k)\left(\min\left\{1, \frac{1}{M\|\vp(k)\|}\right\}+\min\left\{1, \frac{1}{M\|\vp(k+1)\|}\right\}\right)\bigg).
\end{align*}
The only point remaining concerns the behaviour of the error term with $\min\big\{1, \frac{1}{M\|\vp(k)\|}\big\}$. The same reasoning will apply to the sum with $\min\big\{1, \frac{1}{M\|\vp(k+1)\|}\big\}$ equally well. Thus by \eqref{four2} we see
\begin{align*}
 \sum_{P<k\le P_1\le 2P}\frac{\Lambda_{a, q}(k)}{\vp'(k)}\cdot\min\left\{1, \frac{1}{M\|\vp(k)\|}\right\}
 &\lesssim \frac{\log P}{\vp'(P)}\sum_{P<k\le P_1\le 2P}\sum_{m\in\Z}b_m e^{2\pi im\vp(k)}\\
 &\lesssim\frac{\log P}{\vp'(P)}\sum_{m\in\Z}|b_m|\bigg|\sum_{P<k\le P_1\le 2P} e^{2\pi im\vp(k)}\bigg|.
\end{align*}

It suffices to estimate the last sum. Namely, Lemma \ref{vdclem2} applied to the inner sum with $l=1$ and $j=0$ (in fact we refer to the proof of Lemma \ref{vdclem2})  and the bounds \eqref{fcoe2} for $|b_m|$ imply that
\begin{align*}
  \sum_{m\ge0}|b_m|\bigg|\sum_{P<k\le P_1\le 2P} e^{2\pi im\vp(k)}\bigg|\lesssim \frac{P\ \log M}{M} +
  \bigg(\sum_{0<m\le M}+\sum_{m> M}\bigg)|b_m|\frac{m^{1/2}P}{\big(\s(P)\vp(P)\big)^{1/2}}\\
  \lesssim\frac{P\ \log M}{M}+\sum_{0<m\le M}m^{1/2}\frac{\log M}{M}\frac{P}{\big(\s(P)\vp(P)\big)^{1/2}}
  +\sum_{m> M}\frac{M}{m^{3/2}}\frac{P}{\big(\s(P)\vp(P)\big)^{1/2}}\\
  \lesssim\frac{P\ \log M}{M}+\log M M^{1/2}\frac{P}{\big(\s(P)\vp(P)\big)^{1/2}}.
\end{align*}
Taking $M=P^{1+\chi+\e}\vp(P)^{-1}$,  we obtain
\begin{multline*}
\frac{\log P}{\vp'(P)} \sum_{m\ge0}|b_m|\bigg|\sum_{P<k\le P_1\le 2P} e^{2\pi im\vp(k)}\bigg|
\lesssim
 \frac{P\ \log M\ \log P}{\vp'(P)M}+\log M M^{1/2}\frac{P\log P}{\vp'(P)\big(\s(P)\vp(P)\big)^{1/2}}\\
 \lesssim \frac{\vp(P)P^{-\chi-\e}}{\vp'(P)}\ \log^2 P+ \frac{P^{3/2+\chi/2+\e/2}}{\vp'(P)\s(P)^{1/2}\vp(P)}\ \log^2 P\\
 \lesssim\frac{\vp(P)P^{-\chi-\e}}{\vp'(P)}\ \log^2 P\left(1+\frac{P^{3/2+3\chi/2+3\e/2}}{\s(P)^{1/2}\vp(P)^2}\right)
 \lesssim\frac{\vp(P)P^{-\chi-\e}}{\vp'(P)}\lesssim P^{1-\chi-\e}.
\end{multline*}
Taking $0<\e<\chi/100$ we may conclude that the expression in the last parenthesis is bounded. Indeed, due to the inequalities $x^{\g-\e_1}\lesssim_{\e_1}\vp(x)$, and $(\s(x))^{-1}\lesssim_{\e_2}x^{\e_2}$
which hold for arbitrary $\e_1, \e_2>0$ we easily see (taking $\e_1=\e_2=\e>0$) that $3/2+3\chi/2+3\e/2+\e/2-2\g+2\e<0,$ since $3+3\chi+8\e-4\g<4(1-\g)+4\chi-1<0$,
where the last inequality is obviously satisfied and this finishes the proof.
\end{proof}
\subsection{The third reduction -- completing the proof}
Now we can complete the proof of Lemma \ref{formlem}. Our main tool will be Lemma \ref{finboundlem}.
\begin{proof}[Proof of Lemma \ref{formlem}]
Recall that  $\g, \chi>0$  satisfy $16(1-\g)+28\chi<1$ and $0<\e<\chi/100$. Then combining Lemma \ref{lemest0} with Lemma \ref{lemest1} we immediately see that
\begin{multline}\label{estim1}
  \bigg|\sum_{\genfrac{}{}{0pt}{}{p\in\mathbf{P}_{h, N}}{p\equiv a(\mathrm{mod}q)}}\vp'(p)^{-1}\log p\ e^{2\pi i \xi p}-\sum_{\genfrac{}{}{0pt}{}{p\in\mathbf{P}_{N}}{p\equiv a(\mathrm{mod}q)}} \log p\ e^{2\pi i \xi p}\bigg|\\
  \lesssim\log N \sup_{1\le P\le N}\bigg|\sum_{P<k\le P_1\le 2P} \vp'(k)^{-1}\big(\Phi(-\vp(k+1))-\Phi(-\vp(k))\big)\Lambda_{a, q}(k)e^{2\pi i \xi k}\bigg|+N^{1-\chi-\e}\\
  \lesssim\log N \sup_{1\le P\le N}\sum_{0<|m|\le M}\frac{1}{m}\bigg|\sum_{P<k\le P_1\le 2P} \vp'(k)^{-1}\Lambda_{a, q}(k) \Big(e^{2\pi i(\xi k+m\vp(k+1))}-e^{2\pi i(\xi k+m\vp(k))}\Big)\bigg|\\
  +N^{1-\chi-\e}\log N,
\end{multline}
where $M=P^{1+\chi+\e}\vp(P)^{-1}$.
In order to estimate the error term in \eqref{estim1} let us define $U_m(x)=\sum_{P< k\le x}\Lambda_{a, q}(k)e^{2\pi i(\xi k+m\vp(k))}$, and  $\phi_m(k)=\vp'(k)^{-1}\big(e^{2\pi im(\vp(k+1)-\vp(k))}-1\big)$. It is easy to verify that $|\phi_m(x)|\lesssim m$ and $|\phi_m'(x)|\lesssim\frac{m}{x}$, thus summation by parts combined with the estimate \eqref{finbound} yield


\begin{multline}\label{lastest}
\sum_{0<|m|\le M}\frac{1}{m}\bigg|\sum_{P<k\le P_1\le 2P} \vp'(k)^{-1}\Lambda_{a, q}(k) \Big(e^{2\pi i(\xi k+m\vp(k+1))}-e^{2\pi i(\xi k+m\vp(k))}\Big)\bigg|\\
 \lesssim \sum_{m=1}^M\frac{1}{m}\bigg(|U_m(P_1)\phi_m(P_1)|+\int_P^{P_1}|U_m(x)\phi_m'(x)|dx\bigg)
 \lesssim \sum_{m=1}^M\sup_{x\in(P, 2P]}|U_m(x)|\\
 \lesssim
 \sum_{m=1}^M m^{1/2}\log^2 P\ \s(P)^{-1/2}\vp(P)^{1/2} P^{3/8}\\
 +
 \sum_{m=1}^M m^{1/6}\log^{6}P\ \s(P_1)^{-1/6}\vp(P)^{-1/6}P^{13/12}\\
 \lesssim  M^{3/2}\log^2 P\ \s(P)^{-1/2}\vp(P)^{1/2} P^{3/8}
 +  M^{7/6}\log^{6}P\ \s(P)^{-1/6}\vp(P)^{-1/6}P^{13/12}.
\end{multline}
In order to estimate the last two terms in \eqref{lastest} we will use the inequalities $x^{\g-\e_1}\lesssim_{\e_1}\vp(x)$, $\s(x)^{-1}\lesssim_{\e_2}x^{\e_2}$ which hold with arbitrary $\e_1, \e_2>0$.
Since $M=P^{1+\chi+\e}\vp(P)^{-1}$ with  $\chi>0$  such that $16(1-\g)+28\chi<1$ and $0<\e<\chi/100$, it is easy to see (taking $\e_1=\e_2=\e>0$ and $\log x\lesssim_{\e}x^{\e/50}$) that
\begin{align*}
  M^{3/2}\log^2 P\ \s(P)^{-1/2}\vp(P)^{1/2} P^{3/8}&=\left(P^{1+\chi+\e}\vp(P)^{-1}\right)^{3/2}\log^2 P\ \s(P)^{-1/2}\vp(P)^{1/2} P^{3/8}\\
  &=P^{15/8+3\chi/2+3\e/2}\vp(P)^{-1}\s(P)^{-1/2}\log^2 P\\
  &\lesssim P^{15/8+3\chi/2+4\e-\g}\lesssim P^{1-\chi+7/8+5\chi/2+4\e-\g}\lesssim P^{1-\chi-\e'},
\end{align*}
for some $\e'>0$, since $\log^2P\lesssim_{\e}P^{\e}$ and
\begin{align*}
 7/8+3\chi-\g<0\Longleftrightarrow 7+24\chi<8\g\Longleftrightarrow 8(1-\g)+24\chi<1.
\end{align*}
On the other hand, we get
\begin{align*}
  M^{7/6}\log^{6}P\ \s(P)^{-1/6}&\vp(P)^{-1/6}P^{13/12}
  =\left(P^{1+\chi+\e}\vp(P)^{-1}\right)^{7/6}\log^{6}P\ \s(P)^{-1/6}\vp(P)^{-1/6}P^{13/12}\\
  &=P^{27/12+7\chi/6+7\e/6}\vp(P)^{-8/6}\s(P)^{-1/6}\log^{6}P\\
  &\lesssim P^{27/12+7\chi/6+3\e-8\g/6}\lesssim P^{1-\chi+15/12+13\chi/6+3\e-8\g/6}\lesssim P^{1-\chi-\e'}.
\end{align*}
for some $\e'>0$, since $\log^{6}P\lesssim_{\e}P^{2\e/6}$ and
\begin{align*}
  15/12+14\chi/6-8\g/6<0\Longleftrightarrow 15+28\chi<16\g\Longleftrightarrow 16(1-\g)+28\chi<1.
\end{align*}
This provides the desired upper bound for \eqref{lastest} and the proof of Lemma \ref{formlem} is completed.
\end{proof}

\end{document}